\DeclareMathAlphabet{\mathpzc}{OT1}{pzc}{m}{it}
\newcommand{\N}{{\ensuremath{\mathbb{N}}}}
\def\A{\mathcal A}
\def\K{\mathcal K}
\def\overF{\overline{\mathcal F}}
\newtheorem{theorem}{Theorem}[section]
\newtheorem{lemma}[theorem]{Lemma}
\newtheorem{definition}[theorem]{Definition}
\newtheorem{corollary}[theorem]{Corollary}
\newtheorem{proposition}[theorem]{Proposition}
\newtheorem{remark}[theorem]{Remark}
\newtheorem{example}[theorem]{Example}
\def\d{\displaystyle}
\def\pco{\text{{\it p}{\rm -co}}}
\def\co{{\rm co}}
\def\ep{\varepsilon}
\def\m{\mathpzc{m}}
\def\q{\mathpzc{q}}
\def\ker{{\rm ker\, }}
\def\Ai{{(\mathcal A, \|.\|_{\mathcal A})}}
\def\v{\mathpzc{v}}
\title[]{Operator ideals and approximation properties}
\author{Silvia Lassalle  and Pablo Turco}
\thanks{This project was supported in part by PIP 0624 UBACyT 1-218 and
UBACyT 1-746}
\address{Departamento de Matem\'{a}tica, Universidad de San
Andr\'{e}s, Vito Dumas 284, (B1644BID) Victoria, Buenos Aires,
Argentina, FCEN - UBA  and IMAS - CONICET.}
\email{slassall@dm.uba.ar}
 \address{Departamento de Matem\'{a}tica - Pab I,
 Facultad de Cs. Exactas y Naturales, Universidad de Buenos
Aires, (1428) Buenos Aires, Argentina and IMAS - CONICET}
\email{paturco@dm.uba.ar}
\keywords{operator ideals, compact sets, approximation properties}
\subjclass[2010] {Primary: 46B28, 47L20, Secondary: 47B07, 46B20}
\begin{document}

\begin{abstract}
We use the notion of $\A$-compact sets, which are determined by a
Banach operator ideal $\A$, to show that most classic results of
certain approximation properties and several Banach operator ideals
can be systematically studied under this framework. We say
that a Banach space enjoys the $\A$-approximation property if the
identity map is uniformly approximable on $\A$-compact sets by
finite rank operators. The Grothendieck's classic approximation
property is the $\K$-approximation property for $\K$ the ideal of
compact operators and the $p$-approximation property is obtained as
the $\mathcal N^p$-approximation property for $\mathcal N^p$ the
ideal of right $p$-nuclear operators. We introduce a way to measure
the size of $\A$-compact sets and use it to give a norm  on $\K_\A$,
the ideal of $\A$-compact operators. Most of our results concerning
the operator Banach ideal $\K_\A$ are obtained for right-accessible
ideals $\A$. For instance, we prove that $\K_\A$ is a dual ideal, it
is regular and we characterize its maximal hull. A strong concept of
approximation property, which makes use of the norm defined on
$\K_\A$, is also addressed. Finally, we obtain a generalization of
Schwartz theorem with a revisited $\epsilon$-product.
\end{abstract}

\maketitle
\section*{Introduction}

The Grothendieck's classic approximation property is one of the most
important properties in the theory of Banach spaces. A Banach space
has the approximation property if the identity map can be uniformly
approximated by finite rank operators on compact sets. There are
several reformulations of this property, all of them involving at
least one of the concepts: compact operators or uniform convergence
on compact sets. Ever since Grothendieck's famous
{\it{R\'esum\'e}}~\cite{Gro} and reinforced by the fact that there
are Banach spaces which lack the approximation property (the first
example given by Enflo \cite{Enflo}), important variants of this
property have emerged and were intensively studied. The main
developments on approximation properties can be found in \cite{Cas,
LT} and in the references therein.

The main purpose of this article is to undertake the study of a
general method to understand a wide class of approximation
properties and different ideals of compact operators which can be
equally modeled once the system of compact sets has been chosen. To
this end, we use a refined notion of compactness, given by a Banach
operator ideal $\A$, introduced by Carl and Stephani \cite{CaSt}.
Then, we say that a Banach space has the $\A$-approximation property
if the identity map is uniformly approximated by finite rank
operators on $\A$-compact sets.  Also, the system of $\A$-compact
sets induces in a natural way the class of $\A$-compact operators,
consisting of all the continuous linear operators mapping bounded
sets into $\A$-compact sets. This ideal, which we denote by $\K_\A$,
was introduced an studied in \cite{CaSt}. However, the authors do
not emphasize their study from a geometrical point of view. Here, we
introduce a way to measure the size of $\A$-compact sets and then
use our definition to endow $\K_\A$ with a norm $\|\cdot
\|_{\K_\A}$, under which it is a Banach operator ideal.

The paper is divided in three parts. Fixed a Banach operator ideal
$\A$, we give the basics of $\A$-compact sets, then we study the
ideal of $\A$-compact operators $(\K_\A, \|\cdot\|_{\K_\A})$ and finally
we apply the results obtained to study two natural types of
approximation properties induced by $\A$.  To exemplify many of our
results we  appeal to the concept of $p$-compact sets, $p$-compact
operators and the $p$ and $\kappa_p$-approximation properties (see
definitions below). More precisely, in Section~\ref{sets}, we
examine the class of $\A$-compact sets in a Banach space $E$
($K\subset E$ is relatively $\A$-compact if there exist a Banach
space $X$, an operator $T\in \A(X;E)$ and a compact set $M\subset X$
such that $K\subset T(M)$). We show that the definition can be
reformulated considering only operators in $\A(\ell_1; E)$. Also, we
show  that the class of $p$-compact sets fits in this framework for
the ideal of right $p$-nuclear operators, $\mathcal N^p$. This fact
and the notion of $\A$-null sequences \cite{CaSt}, allow us to solve
a question posed in \cite{DelPin} which was also settled
independently by Oja in her recent work \cite{OJA2}.

Section~\ref{operators} is devoted to the ideal of $\A$-compact
operators with an appropriate norm. Our main results are obtained
for right-accessible ideals $\A$, which include minimal and
injective ideals (see definitions below). When $\A$ is
right-accessible, we prove  that $\K_\A$ is a dual operator ideal,
it is regular and we characterize its maximal hull. Also we show
that coincides with the surjective hull of the minimal kernel of
$\A$. Then, different Banach operator ideals may produce the same
ideal of compact operators. This is the case of $\mathcal N_p^d$,
the dual ideal of the $p$-nuclear operators and $\Pi_p^d$, the dual
ideal of the $p$-summing operators; both produce the ideal of
$p$-compact operators, $\K_p$. As a consequence of our results, we
give a factorization of $\K_p$ in terms of $\Pi_p^d$ and $\K$ the
ideal of compact operators. The ideal $\K_p$, also coincides with
that of $\mathcal N^p$-compact operators and it is not
injective~\cite[Proposition~3.4]{GaLaTur}. However, we show that any
injective Banach operator ideal $\A$ produces an injective ideal
$\K_\A$. We finish the section with some results on $\A^d$-compact
operators, with $\A^d$ the dual ideal of $\A$ and apply our results
to give some examples.

Finally, the last section deals with two types of approximation
properties related with $\A$-compact sets. The $\A$-approximation
property can be seen as a way to weaken the Grothendieck's classic
approximation property. It is  defined by changing the system of
compact sets by the system of $\A$-compact sets. For the particular
case of $\mathcal N^p$, we recover the notion of $p$-approximation
property, which was studied for many authors in the last years, see
for instance \cite{AMR, CK, DOPS, LaTur, SiKa}. We prove that a
Banach space $E$ enjoys the $\A$-approximation property if and only
if the set of finite rank operators from $F$ to $E$ is norm-dense in
$\K_\A(F;E)$, for all Banach spaces $F$. If we take into account the
norm  $\|\cdot \|_{\K_\A}$ instead of the supremum norm, we obtain
the $s\A$-approximation property, which is stronger than the
$\A$-approximation property. In this case, when $\A$ is $\mathcal
N^p$, we recover the $\kappa_p$-approximation property, defined in
\cite{DPS_dens} and studied later in \cite{GaLaTur, LaTur, OJA}. We
study in tandem both types of approximation properties and show that
in general they differ. Also, we address the $\A$-approximation
property in terms of a refined notion of the $\epsilon$-product of
Schwartz. \vskip .7cm

Throughout this paper $E$ and $F$ denote Banach spaces, $E'$  and
$B_E$ denote the topological dual and the closed unit ball of $E$,
respectively. A general Banach operator ideal is denoted by $\Ai$.
When the norm $\|\cdot\|_\A$ is understood we simply write $\A$. We
denote by $\mathcal L, \mathcal F, \overline{\mathcal F}$ and $\K$
the operator ideals of linear bounded, finite rank , approximable
and compact operators, respectively; all considered with the
supremum norm. Often, for $x'\in E'$ and $y \in F$, the 1-rank
operator from $E$ to $F$, $x\mapsto x'(x)y$ is denoted by
$x'{\underline{\otimes}} y$.

Along the manuscript, we use several classic Banach operator ideals
such as the ideal of $p$-nuclear, quasi $p$-nuclear and $p$-summing
operators, $1\le p <\infty$, denoted by $\mathcal N_p,
\mathcal{QN}_p$ and $\Pi_p$, respectively. The basics for these
ideals may be found in \cite{DF}, \cite{djt}, \cite{PIE} or
\cite{RYAN}. Also, to illustrate our results, we appeal to the
ideals of right $p$-nuclear operators $\mathcal N^p$, and
$p$-compact operators $\K_p$. To give a brief description of these
ideals, we need some definitions.

Fix $1\le p <\infty$, a sequence $(x_n)_{n}$ in $E$ is said to be
$p$-summable if $\sum_{n=1}^{\infty} \|x_n\|^p<\infty$ and it is
said to be weakly $p$-summable if $\sum_{n=1}^{\infty} |x'(x_n)|^p <
\infty$, for all $x' \in E'$. As usual, $\ell_p(E)$ and
$\ell_p^w(E)$ denote the spaces of all $p$-summable and weakly
$p$-summable sequences in $E$, respectively. Both are Banach spaces,
the first one considered with the norm
$\|(x_n)_n\|_p=(\sum_{n=1}^{\infty} \|x_n\|^p)^{1/p}$ and the second
one with the norm $\|(x_n)_n\|_p^w = \d\sup_{x' \in B_{E'}}
{\{(\textstyle \sum_{n=1}^{\infty}} |x'(x_n)|^p)^{1/p}\}$. For
$p=\infty$, we have the spaces $c_0(E)$ and $c_0^w(E)$ of all null
and weakly null sequences of $E$, respectively. Both spaces are
endowed with their natural norms.

A mapping $T\in \mathcal L(E;F)$ belongs to the ideal of right
$p$-nuclear operators $\mathcal N^p(E;F)$, if there exist sequences
$(x'_n)_n\in \ell^w_{p'}(E')$ and $(y_n)_n \in \ell_{p}(F)$,
$\frac1p+\frac1{p'}=1$ ($\ell_{p'}=c_0$ if $p=1$), such that
$T=\sum_{n=1}^{\infty} x'_n\underline{\otimes} y_n$. The right
$p$-nuclear norm of $T$ is defined by
$\v^p(T)=\inf\{\|(x'_n)_n\|_{\ell^w_{p'}(E')}
\|(y_n)_n\|_{\ell_p(F)} \colon T=\sum_{n=1}^{\infty}
x'_n\underline{\otimes} y_n\}$.

To describe $p$-compact operators, the notion of $p$-compact sets is
required. Following \cite{SiKa}, we say that a subset $K\subset E$
is relatively $p$-compact, $1\le p\le \infty$,  if there exists a
sequence $(x_n)_n\subset \ell_p(E)$ so that $K\subset \pco\{x_n\}$,
where $\pco\{x_n\}=\{\sum_{n=1}^{\infty} \alpha_n x_n \colon
(\alpha_n)_n \in B_{\ell_{p'}}\}$ is the $p$-convex hull of
$(x_n)_n$ and $\frac1p+\frac1{p'}=1$ ($\ell_{p'}=c_0$ if $p=1$).
With $p=\infty$, we have the relatively compact sets and the
balanced convex hull of $(x_n)_n$, $\co\{x_n\}$. A mapping $T\in
\mathcal L(E;F)$ belongs to the ideal of $p$-compact operators
$\K_p(E;F)$, if it maps bounded sets into relatively $p$-compact
sets and $\kappa_p(T)= \inf\{\|(y_n)\|_p \colon T(B_E)\subset
\pco\{y_n\}\}$ is the $p$-compact norm of $T$.

All the definitions and notation used regarding operator ideals can
be found in the monograph by Defant and Floret \cite{DF}. For
further reading on operator ideals we refer the reader to the books
of Pietsch~\cite{PIE}, of Diestel, Jarchow and Tonge~\cite{djt} and
of Ryan~\cite{RYAN}. For further information on approximation
properties, we refer the reader to the survey by Casazza~\cite{Cas}
and to the book of Lindenstrauss and Tzafriri~\cite{LT}, see also
\cite{dfs} and \cite{RYAN}.

\section{On compact sets and operator ideals} \label{sets}
Fix a Banach operator ideal $\Ai$. Following \cite{CaSt}, a set
$K\subset E$ is relatively $\A$-compact if there exist a Banach
space $X$, an operator $T\in \A(X;E)$ and a compact set $M\subset X$
such that $K\subset T(M)$. A sequence $(x_n)_{n}\subset E$ is
$\A$-convergent to zero if there exist a Banach space $X$ and $T\in
\A(X;E)$ with the following property: given $\ep >0$ there exists
$n_\ep \in \N$ such that $x_n\in \ep T(B_X)$, for all $n\ge n_\ep$.
There is a handy characterization of $\A$-null sequences.
\medskip

\begin{lemma}~\cite[Lemma 1.2]{CaSt}
Let $E$ be a Banach space and $\A$ a Banach operator ideal. A
sequence $(x_n)_{n}\subset E$ is $\A$-null if and only if there
exist a Banach space $X$, an operator $T\in \A(X;E)$ and a null
sequence $(y_n)_n \subset X$ such that $x_n=T(y_n)$, for all $n$.
\end{lemma}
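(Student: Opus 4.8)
The plan is to prove the two implications separately, with essentially all the content in the forward direction, since the converse is a direct unwinding of the definition.

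For the converse, suppose $x_n = T(y_n)$ for some $T \in \A(X;E)$ and some null sequence $(y_n)_n \subset X$. Given $\ep > 0$, I would choose $n_\ep$ so that $\|y_n\| \le \ep$ for all $n \ge n_\ep$; then $y_n \in \ep B_X$, hence $x_n = T(y_n) \in \ep T(B_X)$ for every $n \ge n_\ep$. This is exactly the defining property of an $\A$-null sequence, witnessed by the same $X$ and $T$.

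For the forward implication, suppose $(x_n)_n$ is $\A$-null, witnessed by $X$ and $T \in \A(X;E)$. First I would fix a decreasing sequence $\ep_k \searrow 0$ (say $\ep_k = 1/k$) and use the definition to extract a strictly increasing sequence of indices $n_1 < n_2 < \cdots$ with $x_n \in \ep_k T(B_X)$ for every $n \ge n_k$. For each $n \ge n_1$, setting $k(n) = \max\{k : n_k \le n\}$, the membership $x_n \in \ep_{k(n)} T(B_X)$ produces $z_n \in X$ with $T(z_n) = x_n$ and $\|z_n\| \le \ep_{k(n)}$; since $n_k \to \infty$ forces $k(n) \to \infty$, we get $\|z_n\| \to 0$. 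Thus the tail $(x_n)_{n \ge n_1}$ already factors through a genuine null sequence in $X$.

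The one subtle point, and the only real obstacle, is that the finitely many initial vectors $x_1, \dots, x_{n_1-1}$ need not lie in the range of $T$, so they cannot be represented in this way. To absorb them I would enlarge the domain: set $m = n_1 - 1$, put $Z = X \oplus_1 \ell_1^m$, and define $S \colon Z \to E$ by $S(w,(a_1,\dots,a_m)) = T(w) + \sum_{i=1}^m a_i x_i$. Since $S$ is the sum of $T$ precomposed with a projection and a finite rank operator, and finite rank operators belong to every operator ideal, $S \in \A(Z;E)$. Defining $y_i = (0, e_i)$ for $1 \le i \le m$ and $y_n = (z_n, 0)$ for $n \ge n_1$, one checks that $S(y_n) = x_n$ for all $n$, while $\|y_n\| = \|z_n\| \to 0$ shows $(y_n)_n$ is null (its finitely many initial terms of norm one being irrelevant to convergence). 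This $Z$, $S$ and $(y_n)_n$ then witness the desired factorization, completing the proof.
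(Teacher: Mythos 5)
Your proof is correct. There is nothing in the paper to compare it against: the lemma is quoted verbatim from \cite[Lemma~1.2]{CaSt} and no proof is reproduced, so your argument stands on its own. The converse direction is, as you say, an immediate unwinding of the definition. In the forward direction you correctly isolate the one genuine subtlety: the defining property of an $\A$-null sequence constrains only tails, so the finitely many initial vectors $x_1,\dots,x_{n_1-1}$ may fail to lie in the range of $T$ and cannot be factored through it. Your repair --- enlarging the domain to $Z = X \oplus_1 \ell_1^m$ and sending $e_i \mapsto x_i$ --- is exactly the right device: $S$ stays in $\A(Z;E)$ because operator ideals contain the finite rank operators, are closed under sums, and absorb composition with the bounded projection onto $X$; and mapping from $\ell_1^m$ rather than from the span of the initial vectors is the cleaner choice, since it works even when those vectors are linearly dependent or zero. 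The diagonal extraction producing $\|z_n\|\le \ep_{k(n)}\to 0$ is also sound, since $k(n)\to\infty$ as $n\to\infty$. No gaps.
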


The following characterization of $\A$-compactness was extracted
from~\cite[Section 1]{CaSt}.

\begin{theorem}\label{carlste}
Let $E$ be a Banach space, $K\subset E$ a subset and $\A$ a Banach
operator ideal. The following are equivalent.
\begin{enumerate}[\upshape (i)]
\item $K$ is relatively $\A$-compact.
\item There exist a Banach space $X$ and an operator $T \in
\A(X;E)$ such that for every $\ep >0$ there are finitely many
elements $z_i^{\ep} \in E$, $1\leq i \leq k_{\ep}$ realizing a
covering of $K$: $K\subset \bigcup_{i=1}^{k_{\ep}} \{z_i^{(\ep)} +
\ep T(B_X)\}$.
\item There exists an $\A$-null sequence $(x_n)_n\subset E$
such that $K\subset \co\{x_n\}$.
\end{enumerate}
\end{theorem}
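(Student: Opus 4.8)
The plan is to prove the three conditions equivalent cyclically, as (i) $\Rightarrow$ (ii) $\Rightarrow$ (iii) $\Rightarrow$ (i), relying on the classical fact that the closed absolutely convex hull of a null sequence (equivalently, of a relatively compact set) is compact, together with the characterization of $\A$-null sequences recorded in the preceding Lemma. Two of the three implications are routine and I would dispatch them first. For (i) $\Rightarrow$ (ii): writing $K\subset T(M)$ with $T\in\A(X;E)$ and $M\subset X$ compact, total boundedness of $M$ furnishes, for each $\ep>0$, points $w_1,\dots,w_{k_\ep}\in X$ with $M\subset\bigcup_i(w_i+\ep B_X)$; applying $T$ and setting $z_i^{(\ep)}=T(w_i)$ yields the required cover of $K$ by translates of $\ep T(B_X)$. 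For (iii) $\Rightarrow$ (i): by the Lemma an $\A$-null sequence can be written as $x_n=T(y_n)$ with $T\in\A(X;E)$ and $(y_n)_n$ a null sequence in $X$; then $M:=\overline{\co}\{y_n\}$ is compact, and since $T$ is continuous and linear $T(M)=\co\{x_n\}$, which contains $K$, so $K$ is relatively $\A$-compact.

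The substance of the theorem lies in (ii) $\Rightarrow$ (iii), and this is where I would concentrate the effort. Fixing $\ep_k=2^{-k}$ and writing $V=T(B_X)$, hypothesis (ii) supplies for each $k$ a finite set $Z_k\subset E$ with $K\subset Z_k+\ep_k V$. I would then run the telescoping argument underlying Grothendieck's theorem: given $x\in K$, pick $z^{(k)}\in Z_k$ with $x\in z^{(k)}+\ep_k V$, so that $x=z^{(1)}+\sum_{k\ge1}d_k$ with $d_k=z^{(k+1)}-z^{(k)}$, the series converging because $z^{(k)}\to x$. Since $V$ is balanced and convex, $d_k\in(\ep_k+\ep_{k+1})V$, so $d_k=T(u_k)$ with $\|u_k\|\le 3\cdot 2^{-k-1}$; at each level $k$ the difference $d_k$ ranges over a fixed finite set, while the first term $z^{(1)}$ ranges over the fixed finite set $Z_1$. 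After the standard rescaling (replacing $d_k$ by $2^{k/2}d_k$ and redistributing the convex coefficients) one assembles, level by level, a null sequence $(x_n)_n$ out of $Z_1$ and the rescaled difference sets, with $K\subset\co\{x_n\}$.

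The main obstacle is that the centres $z^{(1)}\in Z_1$ are arbitrary points of $E$, not images under $T$, so the sequence just built is not visibly $\A$-null. I would resolve this by observing that the differences $d_k=T(u_k)$ are already $T$-images of a null sequence, whereas $Z_1$ is finite and hence lies in the range of some finite-rank operator $R$ defined on a space $\ell_1^{m}$, and $R\in\A$ because every operator ideal contains $\mathcal F$. Forming $S=T\oplus R$ on $X\oplus_1\ell_1^{m}$ produces an operator in $\A$ whose image of an explicit null sequence is exactly $(x_n)_n$; by the Lemma this makes $(x_n)_n$ an $\A$-null sequence, completing (ii) $\Rightarrow$ (iii). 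The only points requiring care are the choice of the summable gauge $\ep_k$ and the $2^{k/2}$ rescaling, which must simultaneously keep the sequence null and the absolutely convex coefficients summable.
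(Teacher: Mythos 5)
Your proof is correct, but there is no internal proof to compare it against: the paper states this theorem as a quotation from Carl and Stephani (\cite[Section 1]{CaSt}) and supplies no argument, so your attempt has to stand on its own --- and it does. The implications (i)$\Rightarrow$(ii) and (iii)$\Rightarrow$(i) are dispatched the standard way (total boundedness of $M$, respectively the Lemma plus compactness of the closed absolutely convex hull of a null sequence), and both are fine; note only that in (iii)$\Rightarrow$(i) you need the inclusion $\co\{x_n\}=T(\co\{y_n\})\subset T(\overline{\co}\{y_n\})$ rather than the equality $T(M)=\co\{x_n\}$ you assert, a harmless imprecision. The substance is (ii)$\Rightarrow$(iii), and your telescoping decomposition $x=z^{(1)}+\sum_k d_k$ with $d_k\in(\ep_k+\ep_{k+1})T(B_X)$ is exactly the Grothendieck-type argument underlying the original result. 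The bookkeeping you flag does close: rescaling by $2^{k/2}$ gives level norms $O(2^{-k/2})$ and coefficient mass $1+\sum_{k\ge 1}2^{-k/2}=2+\sqrt{2}$, so after dividing the coefficients and multiplying the vectors by this constant (the ``redistribution,'' forced because $\co\{x_n\}$ only admits coefficients in $B_{\ell_1}$) one gets a null sequence whose $\co$-hull contains $K$. Most importantly, you correctly identified and resolved the one genuine obstacle, which is where a blind attempt would typically fail: the centres $z^{(1)}\in Z_1$ need not lie in the range of $T$, so the assembled sequence is not visibly $\A$-null. Absorbing the finite set $Z_1$ via a finite-rank operator $R\in\A(\ell_1^m;E)$ and passing to $S=T\oplus R$ on $X\oplus_1\ell_1^m$ is the right use of the ideal property $\mathcal F\subset\A$; since only the finitely many preimages coming from $Z_1$ have norm one and all later preimages tend to zero, the preimage sequence is null and the Lemma yields that $(x_n)_n$ is $\A$-null, completing the cycle.
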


\begin{example}\label{p-compact=A-compact} Compact sets are $\K$-compact sets
and  $p$-compact sets are $\mathcal N^p$-compact sets.
\end{example}

\begin{proof}Let $K\subset E$ be a $p$-compact set, then $K\subset
\pco\{x_n\}=\{\sum_{n=1}^{\infty} \alpha_n x_n \colon (\alpha_n)_n
\in B_{\ell_{p'}}\}$ with $(x_n)_n$ in $\ell_p(E)$  and
$\frac1p+\frac1{p'}=1$ ($\ell_{p'}=c_0$ if $p=1$). Take
$\beta=(\beta_n)_n \in B_{c_0}$ such that $(\frac{x_n}{\beta_n})_n
\in \ell_p(E)$. Let $y_n=\frac{x_n}{\beta_n}$ and $(e'_n)_n$  the
sequence of coordinate functionals on $\ell_{p'}$. Define $T\colon
\ell_{p'}\to E$ the linear operator by $T=\sum_{n=1}^\infty
e_n'\underline{\otimes} y_n$. Then, $K\subset T(M)$ with
$M=\{(\alpha_n\beta_n)_n\colon (\alpha_n)_n\in B_{\ell_{p'}}\}$. The
result follows by noting that $T\in \mathcal N^p (\ell_{p'};E)$ and
$M\subset B_{\ell_{p'}}$ is relatively compact.
\end{proof}

Recently, Delgado and Pi\~neiro~\cite{DelPin} define $p$-null
sequences, $p\ge 1$,  as follows.  A sequence $(x_n)_n$ in a Banach space $E$ is $p$-null if, given
$\ep > 0$, there exist $n_0 \in \N$ and $(z_k)_k\in \ep B_{\ell_p(E)}$ such that
$x_n\in  \pco\{z_k\}$ for all $n\ge n_0$. In \cite[Theorem~2.5 ]{DelPin}, $p$-compact sets are
characterize  as those which are contained in the convex hull of a $p$-null sequence. Then,
the authors prove, under certain hypothesis
on the Banach space $E$, that a sequence is
$p$-null if and only if it is norm convergent to zero and relatively
$p$-compact,~\cite[Proposition~2.6]{DelPin}. Also, they
wonder if the result remains true for arbitrary Banach spaces. We
give an affirmative answer to this question as an immediate
consequence of the above results.

\begin{corollary}\label{DP_question} Let $E$ be a Banach space and $1\le p \le \infty$. A sequence
$(x_n)_n \subset E$
is $p$-null if and only if $(x_n)_n$ is norm convergent to zero and relatively
$p$-compact.
\end{corollary}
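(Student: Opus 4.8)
The plan is to deduce the corollary from a single general principle: for any Banach operator ideal $\A$ and any Banach space $E$, a sequence $(x_n)_n\subset E$ is $\A$-null if and only if it is norm convergent to zero and relatively $\A$-compact. Granting this, the corollary follows upon specializing to $\A=\mathcal{N}^p$: by Example~\ref{p-compact=A-compact} the relatively $\mathcal{N}^p$-compact sets are exactly the relatively $p$-compact sets, and one checks directly from the definitions that the $\mathcal{N}^p$-null sequences coincide with the $p$-null sequences of Delgado and Pi\~neiro (a representing operator $T\in\mathcal{N}^p(\ell_{p'};E)$, $T=\sum_k e_k'\underline{\otimes}y_k$ with $(y_k)_k\in\ell_p(E)$, satisfies $\ep T(B_{\ell_{p'}})=\pco\{\ep y_k\}$, which is precisely the type of $p$-convex hull appearing in the definition of a $p$-null sequence).

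For the general principle, the forward implication is immediate from the characterization of $\A$-null sequences~\cite[Lemma~1.2]{CaSt}: if $x_n=T(y_n)$ with $T\in\A(X;E)$ and $y_n\to 0$ in $X$, then $\|x_n\|\le\|T\|\,\|y_n\|\to 0$, while $\{x_n\}_n\subset T(\{y_n\}_n\cup\{0\})$ exhibits $\{x_n\}_n$ as relatively $\A$-compact. The reverse implication is the heart of the matter. Starting from a norm-null, relatively $\A$-compact sequence, Theorem~\ref{carlste} produces an $\A$-null sequence $(w_j)_j$ with $\{x_n\}_n\subset\co\{w_j\}$, and \cite[Lemma~1.2]{CaSt} writes $w_j=S(v_j)$ with $S\in\A(X;E)$ and $v_j\to 0$. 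Setting $M:=\overline{\co}\{v_j\}$, which is compact since $(v_j)_j$ is null, continuity of $S$ gives $S(M)\supseteq\co\{w_j\}\supseteq\{x_n\}_n$, so we may choose $u_n\in M$ with $S(u_n)=x_n$.

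The obstacle is that the chosen preimages $(u_n)_n$ need not tend to zero. This is resolved by a cluster-point argument: since $M$ is compact and $S(u_n)=x_n\to 0$, every cluster point $u_\ast$ of $(u_n)_n$ satisfies $S(u_\ast)=0$, so all cluster points lie in the closed set $\ker S\cap M$; compactness of $M$ then forces $\dist(u_n,\ker S\cap M)\to 0$. Picking $c_n\in\ker S\cap M$ with $\|u_n-c_n\|\to 0$ and setting $y_n:=u_n-c_n$, we obtain $S(y_n)=x_n$ with $y_n\to 0$, whence $(x_n)_n$ is $\A$-null by \cite[Lemma~1.2]{CaSt}. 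With the general principle in hand, the specialization described above completes the proof; all remaining verifications are routine bookkeeping with the definitions.
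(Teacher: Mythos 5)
Your proposal is correct and takes essentially the same route as the paper: both reduce the statement to the general equivalence ``$\A$-null $\Leftrightarrow$ norm convergent to zero and relatively $\A$-compact'' specialized to $\A=\mathcal N^p$, using Example~\ref{p-compact=A-compact} to identify $p$-null/$p$-compact with $\mathcal N^p$-null/$\mathcal N^p$-compact, together with \cite[Lemma~1.2]{CaSt} and the equivalence (i)$\Leftrightarrow$(iii) of Theorem~\ref{carlste}. The only difference is that the paper settles the general equivalence by citation, whereas you supply explicitly the one nontrivial step behind it (the cluster-point argument yielding null preimages modulo $\ker S$), which is a correct filling-in of detail rather than a different approach.
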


\begin{proof}
By Example~\ref{p-compact=A-compact}, the definition of $p$-null
sequences coincides with that of $\A$-null sequences for
$\A=\mathcal N^p$. Then, the result follows
from~\cite[Lemma~1.2]{CaSt} and the equivalence (i) and (iii) of
Theorem~\ref{carlste}.
\end{proof}

When this manuscript was complete we learned that E. Oja has also obtained
Corollary~\ref{DP_question}. She gives a description of the space of $p$-null sequences as a tensor
product via the Chevet-Saphar tensor norm. As an application, the result is obtained
\cite[Theorem~4.3]{OJA2}.
\medskip

Now, we introduce a way to measure the size of relatively $\mathcal A$-compact sets.
Let $K\subset E$ be a relatively
$\mathcal A$-compact set we
define
$$
\m_{\A}(K;E)=\inf\{\|T\|_{\A} \colon K\subset T(M), \ T\in
\mathcal A(X;E)\ \ M\subset B_X\},
$$
where the infimum is taken considering all Banach spaces $X$, all
operators $T\in \mathcal
A(X;E)$ and all compact sets $M\subset B_X$ for which the inclusion $K\subset
T(M)$ holds.

There are some properties which derive directly from the definition of $\m_\A$.
For instance $\m_\A(K;E)=\m_\A(\overline{\co} \{K\};E)$. Also, since $\|T\|\leq \|T\|_\A$ for
any Banach operator ideal $\A$ and $T\in \A(X;E)$ then,  $\sup_{x\in
K}\|x\|\leq \m_\A(K;E)$. Moreover, if $\mathcal B$ is a Banach operator ideal such that
$\A \subset \mathcal B$,  a set $K\subset E$ is  $\mathcal B$-compact  whenever it is
$\A$-compact and $\m_{\mathcal B}(K;E)\leq \m_\A(K;E)$.

\begin{remark}\label{m_p=m_np} {\rm Example~\ref{p-compact=A-compact} shows
that given a sequence $(x_n)_n \in \ell_p(E)$, there exist an operator $T\in
\mathcal N^p (\ell_{p'};E)$ and a relatively compact set $M\subset
B_{\ell_{p'}}$ such that $\pco\{x_n\}=T(M)$. Moreover, fixed $\ep>0$, we can
choose $T$ such that $\|(x_n)_n\|_p\leq\|T\|_{\mathcal N^p} \leq \|(x_n)_n\|_p
+\ep$. Then, if $K\subset E$ is $p$-compact,
$$
\m_{\, \mathcal N^p}(K;E)=\inf\{\|(x_n)\|_p \colon K\subset \pco\{x_n\}\}.
$$
}
\end{remark}

Note that $\m_{\, \mathcal N^p}$ recovers the size of $p$-compact sets defined
in~\cite[Definition~2.1]{GaLaTur}. Also, note that if  $F$ is a Banach space containing $E$,  any set
$K\subset E$  is $\A$-compact as a set of $F$ whenever it is $\A$-compact as a set of $E$, and
$\m_\A(K;F)\leq \m_\A(K;E)$. However,  the definition of $\m_\A$ may depend on the space the sets
are considered, as it is shown in \cite[Corollary 3.5]{GaLaTur}. We will prove,  with additional
hypotheses on $\A$,  that  the $\m_\A$-size of an $\A$-compact set remains the same, see
Corollary~\ref{m_A no cambia},  for any pair of Banach spaces $E$ and $F$ such that $E\subset F$, and
Corollary~\ref{m_A no cambia con E''} for the special case when $F=E''$.

The next result shows that the definition of $\mathcal A$-compact sets (and therefore the size
$\m_\A$) can be reformulated considering only operators in $\A(\ell_1; E)$.

\begin{proposition}\label{paso por ell1}
Let $E$ be a Banach space, $K\subset E$ a subset and $\A$ a Banach
operator ideal. The following are equivalent.
\begin{enumerate}[\upshape (i)]
\item $K$ is relatively $\mathcal A$-compact.
\item There exist a Banach space $X$, operators $T\in \mathcal A(X;E)$ and
$S\in
\overline{\mathcal F}(\ell_1;X)$ and a relatively compact set $M\subset
B_{\ell_1}$ such that
$K\subset T\circ S (M)$. Moreover,
$$
\m_{\A}(K;E)=\inf\{\|T\|_{\A} \|S\| \colon K\subset T\circ S(M); \ M\subset
B_{\ell_1}\}.
$$
where the infimum is taken over all Banach spaces $X$, operators $T$ and
$S$ and sets $M$ as above.
\end{enumerate}
\end{proposition}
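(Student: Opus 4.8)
The plan is to establish the two implications and the norm identity simultaneously. The implication (ii)$\Rightarrow$(i) is immediate: since $S$ is bounded and $T\in\A(X;E)$, the ideal property gives $T\circ S\in\A(\ell_1;E)$ with $\|T\circ S\|_\A\le\|T\|_\A\|S\|$; as $M\subset B_{\ell_1}$ is relatively compact and $K\subset (T\circ S)(M)$, the definition of relative $\A$-compactness shows that $K$ is relatively $\A$-compact and, moreover, $\m_\A(K;E)\le\|T\circ S\|_\A\le\|T\|_\A\|S\|$. Taking the infimum over all such factorizations yields $\m_\A(K;E)\le\inf\{\|T\|_\A\|S\|\}$, which is one half of the claimed identity.

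For the reverse implication and inequality, I would fix $\ep>0$ and start from a near-optimal realization of $\m_\A$: a Banach space $Y$, an operator $U\in\A(Y;E)$ and a relatively compact $M'\subset B_Y$ with $K\subset U(M')$ and $\|U\|_\A\le\m_\A(K;E)+\ep$, as provided by the very definition of $\m_\A$. The problem then reduces to covering the compact set $M'\subset B_Y$ by a set $S(M)$, where $S\in\overline{\mathcal F}(\ell_1;Y)$ is approximable of norm close to $1$ and $M\subset B_{\ell_1}$ is relatively compact; setting $X=Y$ and $T=U$ would then give $K\subset U(M')\subset (T\circ S)(M)$ with $\|T\|_\A\|S\|\le(\m_\A(K;E)+\ep)\|S\|$. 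To produce such an $S$ I would invoke the quantitative form of Grothendieck's lemma: since $M'\subset B_Y$ is relatively compact, for every $\delta>0$ there is a null sequence $(v_n)_n\subset Y$ with $M'\subset\overline{\co}\{v_n\}$ and $\sup_n\|v_n\|\le 1+\delta$.

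The final step is the weighting device already used in Example~\ref{p-compact=A-compact}. Because $\|v_n\|\to0$, I can choose scalars $0<\beta_n\le1$ with $\beta_n\to0$ and $\|v_n\|/\beta_n\to0$, arranged so that $\sup_n\|v_n\|/\beta_n\le 1+\delta$. Define $S\colon\ell_1\to Y$ by $S(e_n)=v_n/\beta_n$; then $\|S\|=\sup_n\|v_n\|/\beta_n\le1+\delta$, and since $v_n/\beta_n\to0$ the partial-sum operators converge to $S$ in norm, so $S\in\overline{\mathcal F}(\ell_1;Y)$. Letting $D_\beta$ denote the diagonal multiplier $(\xi_n)_n\mapsto(\beta_n\xi_n)_n$ on $\ell_1$, the condition $\beta\in c_0$ makes $D_\beta$ approximable, so $M:=D_\beta(B_{\ell_1})$ is relatively compact, while $\|\beta\|_\infty\le1$ gives $M\subset B_{\ell_1}$. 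A direct computation then shows $S(M)=\{\sum_n\xi_n v_n\colon(\xi_n)_n\in B_{\ell_1}\}=\overline{\co}\{v_n\}\supset M'$, whence $K\subset(T\circ S)(M)$ and $\|T\|_\A\|S\|\le(\m_\A(K;E)+\ep)(1+\delta)$. Letting $\ep,\delta\to0$ gives both the existence statement in (ii) and the reverse inequality $\inf\{\|T\|_\A\|S\|\}\le\m_\A(K;E)$, which together with the first paragraph proves the identity.

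I expect the main obstacle to be this third step: simultaneously guaranteeing that $S$ be approximable (which needs $\|v_n/\beta_n\|\to0$), that $\|S\|\le1+\delta$ (which needs $\sup_n\|v_n/\beta_n\|\le1+\delta$), and that $M$ be relatively compact inside $B_{\ell_1}$ (which needs $\beta_n\to0$ with $\|\beta\|_\infty\le1$). This balancing act is exactly what forces the quantitative Grothendieck lemma with constant $1+\delta$ rather than its plain version: with only a crude bound $\sup_n\|v_n\|\le C$ one could not keep $\|S\|$ arbitrarily close to $1$ while respecting $\beta_n\le1$, and the sharp norm identity (as opposed to a mere two-sided estimate) would be lost.
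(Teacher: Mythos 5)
Your proof is correct and follows essentially the same route as the paper's: the same near-optimal factorization from the definition of $\m_{\A}$, the same quantitative Grothendieck lemma for compact subsets of $B_Y$, and the same weighting device splitting the null sequence into an approximable operator $S$ on $\ell_1$ and a relatively compact multiplier set $M=D_\beta(B_{\ell_1})\subset B_{\ell_1}$. The only cosmetic difference is that you establish $S\in\overline{\mathcal F}(\ell_1;Y)$ directly via norm convergence of its partial-sum truncations, whereas the paper deduces it from compactness of $S$ together with the approximation property of $\ell_1'$.
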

\begin{proof} First note that any set $K$ as in (ii) is relatively $\mathcal
A$-compact. Indeed, suppose that there exists a Banach space $X$
such that $K\subset T\circ S (M)$ for $T \in \mathcal A(X;E)$,
$S\in\overline{\mathcal F}(\ell_1;F)$ and $M\subset B_{\ell_1}$
relatively compact. Then, $K\subset \|S\| T(\frac{S(M)}{\|S\|})$
with $\frac{S(M)}{\|S\|}\subset B_X$ a relatively compact set. Also,
$\m_{\A}(K;E)\leq \|S\| \|T\|_{\A}$.

For the converse, since $K\subset E$ is relatively $\A$-compact,
given $\ep>0$ there exist a Banach space $X$, a compact set
$L\subset B_X$ and a operator $T\in \A(X;E)$ such that $K\subset
T(L)$ and $\|T\|_{\A}\leq \m_{\A}(K;E)+\ep$. Since $L\subset B_X$ is
compact, there exists a sequence $(y_n)_n \in c_0(X)$ such that
$L\subset \{\sum_{n=1}^{\infty} \alpha_n y_n \colon (\alpha_n)_n \in
B_{\ell_1}\}$ and $\sup_{n \in \N}\|y_n\|\leq 1 + \ep$. Choose a
sequence $(\beta_n)_n \in B_{c_0}$ such that
$(\frac{y_n}{\beta_n})_n \in c_0(X)$ and $\sup_{n \in
\N}\|\frac{y_n}{\beta_n}\| \leq \sup_{n \in \N}\|y_n\|+\ep\leq
1+2\ep$. Call $z_n=\frac{y_n}{\beta_n}$ and $M=\{(\gamma_n)_n \in
\ell_1 \colon \gamma_n=\beta_n\alpha_n, \ \ (\alpha_n)_n \in
B_{\ell_1}\}$. Note that $M\subset B_{\ell_1}$ is a relatively
compact set. Define the operator $S\colon \ell_1\rightarrow X$ as
$S(\gamma_n)=\sum_{n=1}^{\infty} \gamma_nz_n$. Since
$S(B_{\ell_1})\subset \co\{z_n\}$ and $\ell_1'$ has the
approximation property, $S$ is approximable and $\|S\|\leq \sup_{n
\in \N}\|z_n\|\leq 1+2\ep$.  Moreover,
$$
S(M)=\{\sum_{n=1}^{\infty} \gamma_n z_n \colon (\gamma_n)_n \in
M\}=\{\sum_{n=1}^{\infty} \alpha_n y_n \colon (\alpha_n)_n \in B_{\ell_1}\},
$$
then $L\subset S(M)$ and
$$
K\subset T(L)\subset T\circ S(M).
$$
We also have
$$
\|T\|_{\A}\|S\|\leq \|T\|_{\A}(1+2\ep)\leq (\m_{\A}(K;E)+\ep)(1+2\ep),
$$
and the result follows by letting $\ep\to 0$.
\end{proof}

\begin{corollary}\label{a-comp aprox} Let $E$ be a Banach space, $K\subset E$ a subset and $\A$ a
Banach operator ideal. Then, $K$ is relatively $\A$-compact if and only if $K$ is relatively $\A
\circ \overline{\mathcal F}$-compact and $\m_{\A}(K;E)=\m_{\A \circ \overline{\mathcal
F}}(K;E)$.
\end{corollary}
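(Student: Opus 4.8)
The plan is to derive this corollary directly from Proposition~\ref{paso por ell1} by recognizing that the factorization $T\circ S$ appearing there is precisely a factorization through the composition ideal $\A\circ\overline{\mathcal F}$. First I would recall that for the composition ideal, an operator $U\in\A\circ\overline{\mathcal F}(\ell_1;E)$ admits a factorization $U=T\circ S$ with $S\in\overline{\mathcal F}(\ell_1;X)$, $T\in\A(X;E)$, and the composition-ideal norm is $\|U\|_{\A\circ\overline{\mathcal F}}=\inf\|T\|_\A\|S\|$, the infimum over all such factorizations. This is exactly the quantity optimized on the right-hand side of the norm formula in Proposition~\ref{paso por ell1}.

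The forward implication is the easy direction and needs essentially no new work: if $K$ is relatively $\A$-compact, then trivially every $\A$-null sequence or every factorization through $\A$ is also a factorization through $\A\circ\overline{\mathcal F}$ (compose with the identity, or use that $\overline{\mathcal F}$ contains the identity-approximating finite-rank operators on $\ell_1$), so $K$ is relatively $\A\circ\overline{\mathcal F}$-compact and $\m_{\A\circ\overline{\mathcal F}}(K;E)\le\m_\A(K;E)$. More conceptually, since $\A\subset\A\circ\overline{\mathcal F}$ as operator ideals, the monotonicity property of $\m$ noted just before Remark~\ref{m_p=m_np} gives both the inclusion of compact sets and the inequality $\m_{\A\circ\overline{\mathcal F}}(K;E)\le\m_\A(K;E)$ at once.

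For the reverse implication and the matching inequality $\m_\A(K;E)\le\m_{\A\circ\overline{\mathcal F}}(K;E)$, I would start from a set $K$ that is relatively $\A\circ\overline{\mathcal F}$-compact, apply the characterization of $\A\circ\overline{\mathcal F}$-compactness to obtain $K\subset U(L)$ with $U\in\A\circ\overline{\mathcal F}(X';E)$ and $L$ compact in the unit ball, and then invoke Proposition~\ref{paso por ell1} applied to the ideal $\A\circ\overline{\mathcal F}$ to reduce to a factorization $K\subset U\circ S'(M)$ through $\ell_1$. Factoring $U=T\circ S$ through the composition ideal then exhibits $K$ inside $T\circ(S\circ S')(M)$ with $T\in\A$, which realizes $K$ as relatively $\A$-compact and gives $\m_\A(K;E)\le\|T\|_\A\|S\circ S'\|\le\|T\|_\A\|S\|\|S'\|$; taking infima over the two independent factorizations yields $\m_\A(K;E)\le\m_{\A\circ\overline{\mathcal F}}(K;E)$.

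The main obstacle I anticipate is bookkeeping with the $\epsilon$'s across the two nested infima (the one defining the composition-ideal norm and the one defining $\m$), to ensure that the product of factorization norms can be made to approach $\m_{\A\circ\overline{\mathcal F}}(K;E)$ without the approximable factor through $\ell_1$ inflating the norm—precisely the point already handled in the proof of Proposition~\ref{paso por ell1}, where $\|S\|$ was controlled by $1+2\ep$. Rather than re-running that argument, I would argue that the cleanest route is to observe that $\A\circ\overline{\mathcal F}$ and $\A$ produce the same factorizations through $\ell_1$ up to the approximable map already present in Proposition~\ref{paso por ell1}: since the proposition's right-hand infimum is taken over factorizations $T\circ S$ with $S\in\overline{\mathcal F}(\ell_1;X)$, this infimum computes $\m_\A$ and simultaneously equals $\m_{\A\circ\overline{\mathcal F}}$, forcing the two sizes to coincide.
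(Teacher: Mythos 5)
Your proposal has the ideal inclusion backwards, and this is not a cosmetic slip: it swaps which of the two implications is the easy one. For any Banach operator ideal $\A$, the ideal property gives $\A\circ\overline{\mathcal F}\subset\A$ with $\|\cdot\|_{\A}\le\|\cdot\|_{\A\circ\overline{\mathcal F}}$ (if $U=T\circ S$ with $T\in\A$ and $S$ approximable, then $U\in\A$ and $\|U\|_{\A}\le\|T\|_{\A}\|S\|$); the reverse inclusion $\A\subset\A\circ\overline{\mathcal F}$ that you invoke is false in general. For instance, $\mathcal L\circ\overline{\mathcal F}=\overline{\mathcal F}\neq\mathcal L$: a member of $\A\circ\overline{\mathcal F}$ must factor through an approximable operator, and ``composing with the identity'' does not produce such a factorization, since the identity of an infinite-dimensional space is never approximable (in particular the identity of $\ell_1$ is not; the approximation property of $\ell_1$ only gives finite-rank approximation uniformly on compact sets, not in operator norm, and upgrading that approximation to an exact factorization is precisely the work done in Proposition~\ref{paso por ell1}). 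Consequently, the monotonicity remark before Remark~\ref{m_p=m_np}, applied to the correct inclusion $\A\circ\overline{\mathcal F}\subset\A$, yields the opposite of what you claim: it shows that every relatively $\A\circ\overline{\mathcal F}$-compact set is relatively $\A$-compact with $\m_{\A}(K;E)\le\m_{\A\circ\overline{\mathcal F}}(K;E)$ --- that is, it disposes in one line of the direction you spend your third paragraph proving by re-factorizing through Proposition~\ref{paso por ell1} applied to $\A\circ\overline{\mathcal F}$.

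The direction you call ``trivial'' --- relatively $\A$-compact implies relatively $\A\circ\overline{\mathcal F}$-compact with $\m_{\A\circ\overline{\mathcal F}}(K;E)\le\m_{\A}(K;E)$ --- is precisely the substantive one, and it cannot come from monotonicity; it needs Proposition~\ref{paso por ell1}(ii). This is exactly how the paper argues: given $K$ relatively $\A$-compact, the proposition produces $K\subset T\circ S(M)$ with $T\in\A(X;E)$, $S\in\overline{\mathcal F}(\ell_1;X)$ and $M\subset B_{\ell_1}$ compact; since $T\circ S\in\A\circ\overline{\mathcal F}(\ell_1;E)$ and $\|T\circ S\|_{\A\circ\overline{\mathcal F}}\le\|T\|_{\A}\|S\|$, the set $K$ is relatively $\A\circ\overline{\mathcal F}$-compact and taking the infimum over such factorizations gives $\m_{\A\circ\overline{\mathcal F}}(K;E)\le\m_{\A}(K;E)$. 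Your final paragraph gestures at this when you say the infimum in the proposition ``computes $\m_{\A}$ and simultaneously equals $\m_{\A\circ\overline{\mathcal F}}$,'' but that asserted equality is the whole corollary, and the only justification you offer for the half of it that matters is the false inclusion. So the raw materials of a correct proof are present in your write-up, but the justifications for the two directions are interchanged; as written, the forward implication is unproved.
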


\begin{proof}
Since $\mathcal A \circ \overline{\mathcal
F} \subset \A$,  every relatively $\A \circ
\overline{\mathcal
F}$-compact set is relatively $\A$-compact  and $\m_{\A}(K;E)\leq
\m_{\A \circ \overline{\mathcal
F}}(K;E)$. The other implication is given by the item (ii)
of the above
proposition, which also gives that
$$
\begin{array}{rl}
\m_{\A}(K;E) &= \inf\{\|T\|_{\A} \|S\| \colon K\subset T\circ S(M); \ M\subset
B_{\ell_1}\}\\
& \geq
\inf\{\|T\|_{\A} \|S\| \colon K\subset T\circ
S(M); \ M\subset B_{F}\}\\
& = \m_{\A \circ \overline{\mathcal
F}}(K;E)
\end{array}
$$
and the proof is complete.
\end{proof}

\begin{corollary} Let $E$ be a Banach space, $\A$ a Banach
operator ideal and $K\subset E$ a relatively $\A$-compact set. Then,
$$
\m_{\A}(K;E)=\inf\{\|T\|_{\A} \colon K\subset T(M), \ \ T\in \A(\ell_1;E) \ \
\text{and} \ \
M\subset B_{\ell_1}\},
$$
where the infimum is taken over all operators $T\in \A(\ell_1;E) $
and all relatively compact sets $M\subset B_{\ell_1}$ such that
$K\subset T(M)$.
\end{corollary}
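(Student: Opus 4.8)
The plan is to prove the identity as two opposite inequalities between $\m_\A(K;E)$ and the right-hand quantity, which I abbreviate by $\rho$:
$$
\rho=\inf\{\|T\|_{\A} \colon K\subset T(M),\ T\in \A(\ell_1;E),\ M\subset B_{\ell_1}\ \text{relatively compact}\}.
$$

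One inequality is immediate and requires no new idea. Any operator $T\in\A(\ell_1;E)$ together with a relatively compact set $M\subset B_{\ell_1}$ satisfying $K\subset T(M)$ is admissible in the unrestricted infimum defining $\m_\A(K;E)$ (simply take the abstract domain space $X$ to be $\ell_1$). Thus the family of competitors for $\rho$ sits inside the family of competitors for $\m_\A(K;E)$, and passing to infima over the larger family gives $\m_\A(K;E)\le\rho$.

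For the reverse inequality I would feed Proposition~\ref{paso por ell1} into the ideal property. That proposition already supplies the factorization through $\ell_1$ and the norm formula
$$
\m_\A(K;E)=\inf\{\|T\|_\A\|S\| \colon K\subset T\circ S(M);\ M\subset B_{\ell_1}\},
$$
the infimum ranging over all $X$, all $T\in\A(X;E)$, all $S\in\overline{\mathcal F}(\ell_1;X)$ and all relatively compact $M\subset B_{\ell_1}$. Fixing one such triple $(T,S,M)$ and setting $R=T\circ S$, the ideal axioms give $R\in\A(\ell_1;E)$ with $\|R\|_\A\le\|T\|_\A\|S\|$, while $K\subset R(M)$ with $M\subset B_{\ell_1}$ relatively compact. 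Hence $R$ is a legitimate competitor for $\rho$, so $\rho\le\|R\|_\A\le\|T\|_\A\|S\|$; taking the infimum over all admissible triples yields $\rho\le\m_\A(K;E)$, and the two bounds combine to the claimed equality.

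The argument is deliberately short, since the genuine content is already packaged in Proposition~\ref{paso por ell1}; the only supplementary ingredient is the standard ideal inequality $\|T\circ S\|_\A\le\|T\|_\A\|S\|$, which absorbs the approximable map $S$ into the ideal norm of a single operator on $\ell_1$. I therefore do not expect a real obstacle. The one point deserving care is that the relatively compact set $M$ furnished by Proposition~\ref{paso por ell1} already lies in $B_{\ell_1}$, so the composed operator $R$ can be evaluated on the very same $M$ with no rescaling, keeping $M\subset B_{\ell_1}$ intact.
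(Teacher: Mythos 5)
Your proposal is correct and follows essentially the same route as the paper: the paper's proof is a three-line chain of inequalities in which the middle step is precisely your absorption of $S$ into $T$ via the ideal property $\|T\circ S\|_{\A}\le\|T\|_{\A}\|S\|$ applied to Proposition~\ref{paso por ell1}, and the final step is your observation that every competitor for the $\ell_1$-infimum is a competitor for $\m_{\A}(K;E)$. You have merely written out explicitly what the paper leaves implicit.
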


\begin{proof} With standard notation, Proposition~\ref{paso por ell1}
gives
$$
\begin{array}{rl}
\m_{\A}(K;E)=&\inf\{\|T\|_{\A} \|S\| \colon K\subset T\circ S(M); \ M\subset
B_{\ell_1}\}\\
\geq& \inf\{\|T\|_{\A} \colon K\subset T(M); \ M\subset B_{\ell_1}\}\\
\geq&\m_{\A}(K;E),
\end{array}
$$
which proofs the result.
\end{proof}

\section{The ideal of $\A$-compact operators}
\label{operators}

Associated to the concept of $\A$-compact sets we have the notion of
$\mathcal A$-compact operators, which generalizes that of compact
operators. An operator $T\in \mathcal L(E;F)$ is said to be
$\A$-compact if $T(B_E)$ is a relatively $\A$-compact set in
$F$,~\cite[Definition~2]{CaSt}. The space of $\A$-compact operators,
denoted by $\mathcal K_{\A}$, becomes an Banach operator ideal if
endowed with the norm defined, for any $T\in \K_{\A}(E;F)$, by
$$
\|T\|_{\K_{\A}}=\m_{\A}(T(B_E);F).
$$

With  Example~\ref{p-compact=A-compact} and Remark~\ref{m_p=m_np} we obtain our first
example.

\begin{example} The ideal $\K_p$ and the ideal of  $\mathcal N^p$-compact
operators coincide isometrically.
\end{example}

We propose to study some properties enjoyed by $\K_\A$ and by the operator ideals obtained by the procedures
$$
\A \to \A^d, \quad \A \to \A^{min},\quad \A \to \A^{max}, \quad \A \to \A^{sur},\quad \A \to
\A^{inj}, \quad \A \to
\A^{reg}.
$$
The definitions of these procedures  will
be given opportunely.  We start by recalling  the dual ideal of
$\A$, $\A^d$. Given $T\in \mathcal L$ denote its adjoint by $T'$, then $\A^d (E; F) =
\{T \in \mathcal L(E;F)\colon T'\in \A(F', E')\}$ and $\|T\|_{\A^d}=\|T'\|_{\A}$. Also, recall that
the minimal kernel of $\A$, $\A^{min}$ is the composition ideal  $\A^{min}= \overF \circ \A \circ
\overF$ considered with its natural norm. The ideal is said to be minimal if $\A=\A^{min}$,
isometrically.

Most of our results are obtained for right-accessible operator
ideals. By~\cite[Proposition~25.2~(2)]{DF} we may consider
right-accessible Banach operator ideals as those which satisfy
$\A^{min} = \A \circ \overline{\mathcal F}$, isometrically.
By~\cite[Corollary~21.3]{DF} the left-accessible Banach operator
ideals as those  satisfying that its dual operator ideal is
right-accessible. Also, $\A$ is totally-accessible if for every
finite rank operator $T \in \mathcal L(E;F)$ and $\ep>0$ there exist
$Y\subset F$ a finite-dimensional subspace, $X\subset E$ a subspace
of finite-codimensional and $S \in \mathcal L(E/X;Y)$ such that $T=I_F
S Q_E$ and $\|S\|_{\mathcal A}\leq (1+\ep)\|T\|_{\mathcal A}$, where
$Q_E\colon E\rightarrow E/X$ and $I_F\colon Y\rightarrow F$ are the
canonical quotient mapping and the inclusion, respectively. If $\A$
is totally-accessible then it is right and left-accessible, see
\cite[21.2]{DF}. Also, any minimal ideal is right and left-accessible,
\cite[Corollary~25.3]{DF}.

\subsection{On $\A$-compact operators related with surjective and injective hulls.}
Recall that an operator $T \in \mathcal L(E;F)$ belongs to the surjective hull of $\A(E;F)$,
$\A^{sur}(E;F)$, if and only if $T\circ \q_E$ belongs to $\A$ where $\q_E\colon
\ell_1(B_E)\twoheadrightarrow  E$ is the canonical surjection and
$\|T\|_{\A^{sur}}=\|T\circ \q_E\|_{\A}$. On the other hand, an operator $T \in \mathcal L(E;F)$
belongs to the injective hull of $\A(E;F)$, $\A^{inj}(E;F)$, if and only if $\iota_F\circ T\in \A$,
where $\iota_F\colon F \hookrightarrow \ell_\infty(B_{F'})$ is the canonical injection and
$\|T\|_{\A^{inj}}=\|\iota_F\circ T\|_\A$. The ideal
$\A$ is surjective if $\A=\A^{sur}$ and it is injective if $\A=\A^{inj}$, isometrically. Any
injective ideal  is right-accessible while any surjective ideal is left-accessible \cite[21.2]{DF}.
\medskip

In~\cite[Theorem~2.1]{CaSt}, the operator ideal $\K_\A$ is described
in terms of $\A^{sur}$ via  the identities: $\K_{\A}=(\A\circ
\K)^{sur}=\A^{sur} \circ \K$. Then, we have two direct consequences.
First, $\K_\A=\K_{\K_\A}$, and the process only may produce a new
operator ideal the first time it is applied. Also,  $\K_\A$ is
surjective. From this second fact we observe that the ideal of
nuclear operators $\mathcal N$ does not coincide with $\K_\A$, for
any Banach operator ideal $\A$. With the next proposition we give a
slight improvement of \cite[Theorem~2.1]{CaSt} by considering
approximable instead of compact operators.

\begin{proposition}\label{a-comp con F}
Let $\A$ be a  Banach operator ideal. Then
$$
\K_{\A}=\K_{\A\circ \overline{\mathcal F}}=(\A\circ
\overline{\mathcal F})^{sur}=\A^{sur} \circ \K, \hskip 1cm
isometrically.
$$
\end{proposition}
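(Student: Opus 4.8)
The plan is to establish the chain of equalities by combining the author's Corollary~\ref{a-comp aprox} with the already-quoted result of Carl and Stephani, namely $\K_{\A}=(\A\circ \K)^{sur}=\A^{sur}\circ \K$. Since Corollary~\ref{a-comp aprox} gives that a set is $\A$-compact if and only if it is $\A\circ\overline{\mathcal F}$-compact, with equal $\m$-sizes, applying this to $T(B_E)$ immediately yields the isometric identity $\K_{\A}=\K_{\A\circ\overline{\mathcal F}}$, which is the first equality. This reduces the remaining work to identifying $(\A\circ\overline{\mathcal F})^{sur}$ with the outer two terms.

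First I would prove $\K_{\A\circ\overline{\mathcal F}}=(\A\circ\overline{\mathcal F})^{sur}$. This is just the Carl--Stephani description $\K_{\mathcal B}=(\mathcal B\circ\K)^{sur}$ applied to the ideal $\mathcal B=\A\circ\overline{\mathcal F}$, provided one checks that $(\A\circ\overline{\mathcal F})\circ\K=\A\circ\overline{\mathcal F}$ as normed ideals. The inclusion $\A\circ\overline{\mathcal F}\subset(\A\circ\overline{\mathcal F})\circ\K$ holds because $\overline{\mathcal F}\subset\K$ and identities factor through themselves; the reverse inclusion $\overline{\mathcal F}\circ\K\subset\overline{\mathcal F}$ follows since the composition of an approximable operator with a compact (hence bounded) operator is again approximable, with the operator norm controlled accordingly. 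Thus $(\A\circ\overline{\mathcal F})\circ\K$ and $\A\circ\overline{\mathcal F}$ coincide isometrically, and the Carl--Stephani formula gives the desired middle equality.

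It remains to show $(\A\circ\overline{\mathcal F})^{sur}=\A^{sur}\circ\K$. For this I would again invoke $\A^{sur}\circ\K=\K_{\A}=\K_{\A\circ\overline{\mathcal F}}=(\A\circ\overline{\mathcal F})^{sur}$, so that the outer equality is in fact forced once the first two are in place; alternatively, one argues directly that the surjective hull is insensitive to precomposition with $\overline{\mathcal F}$ on the source side, using that $\q_E\colon\ell_1(B_E)\twoheadrightarrow E$ factors through approximable maps on $\ell_1(B_E)$ because $\ell_1(B_E)'$ has the approximation property, exactly as in the proof of Proposition~\ref{paso por ell1}. The main obstacle I anticipate is bookkeeping the norms so that every inclusion is isometric rather than merely bounded: one must verify that the factorization $T\circ S$ with $S\in\overline{\mathcal F}(\ell_1;X)$ supplied by Proposition~\ref{paso por ell1} produces $\|S\|$ arbitrarily close to $1$, so that $\|T\|_{\A}\|S\|$ and $\|T\|_{\A}$ agree in the infimum. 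This is precisely the content already extracted in Corollary~\ref{a-comp aprox}, so the cleanest route is to lean on that corollary for the isometry and reserve the Carl--Stephani identities purely for the algebraic identification of the ideals.
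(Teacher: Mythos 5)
Your proposal follows the same skeleton as the paper's proof (Corollary~\ref{a-comp aprox} combined with the Carl--Stephani identities $\K_{\mathcal B}=(\mathcal B\circ\K)^{sur}=\mathcal B^{sur}\circ\K$ of \cite[Theorem~2.1]{CaSt}), but two of its steps have genuine gaps. The first is the claimed inclusion $\A\circ\overline{\mathcal F}\subset(\A\circ\overline{\mathcal F})\circ\K$. The justification ``because $\overline{\mathcal F}\subset\K$ and identities factor through themselves'' is not an argument: the identity of an infinite-dimensional space is not compact, so it cannot serve as the compact right-hand factor. What this inclusion actually requires is $\overline{\mathcal F}\subset\overline{\mathcal F}\circ\K$, i.e.\ that every approximable operator $S$ factors as $S=S'\circ R$ with $S'$ approximable, $R$ compact and $\|S'\|\,\|R\|\leq(1+\ep)\|S\|$. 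This is true, but it is a real construction (write $S=\sum_n G_n$ with $G_n$ finite rank and $\|G_n\|$ tending to $0$ rapidly, then factor through the $c_0$-sum of the finite-dimensional ranges of the $G_n$); it does not follow formally from $\overline{\mathcal F}\subset\K$. Note that you could bypass this point entirely: the inclusion you do prove, $\overline{\mathcal F}\circ\K\subset\overline{\mathcal F}$, yields $\K_{\A\circ\overline{\mathcal F}}=((\A\circ\overline{\mathcal F})\circ\K)^{sur}\subset(\A\circ\overline{\mathcal F})^{sur}$, while $\overline{\mathcal F}\subset\K$ yields $(\A\circ\overline{\mathcal F})^{sur}\subset(\A\circ\K)^{sur}=\K_{\A}$; together with $\K_{\A}=\K_{\A\circ\overline{\mathcal F}}$ these inclusions close up and force all four ideals to coincide.

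The second gap concerns the isometry. Your plan to ``lean on Corollary~\ref{a-comp aprox} for the isometry and reserve the Carl--Stephani identities purely for the algebraic identification of the ideals'' cannot work, because that corollary provides exactly one isometry, namely $\m_{\A}=\m_{\A\circ\overline{\mathcal F}}$ on $\A$-compact sets, hence $\K_{\A}=\K_{\A\circ\overline{\mathcal F}}$ isometrically; it says nothing about how $\|T\|_{\K_{\A}}=\m_{\A}(T(B_E);F)$ compares with the surjective-hull norm $\|T\circ\q_E\|_{\A\circ\overline{\mathcal F}}$ or with the composition norm of $\A^{sur}\circ\K$. Since the norm $\|\cdot\|_{\K_{\A}}$ is introduced in this very paper, \cite[Theorem~2.1]{CaSt} is available only as an equality of ideals, and the isometric versions must be produced by running $\|\cdot\|_{\K_{\A}}$ through its lifting argument: from $T(B_E)\subset A(M)$ with $M\subset B_X$ compact one builds $R\colon \ell_1(B_E)\to X$ with $R(\delta_e)=m_e$, where $T(e)=A(m_e)$, so that $T\circ\q_E=A\circ R$ with $R$ compact and $\|R\|\leq 1$, and conversely. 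This norm bookkeeping is precisely what the paper's proof consists of, and it is the part your proposal leaves unproved. (For the same reason, your parenthetical alternative that $\q_E$ ``factors through approximable maps on $\ell_1(B_E)$'' cannot be right as stated: a surjection onto an infinite-dimensional space never factors through a compact, let alone approximable, operator.)
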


\begin{proof} The isometric result is obtained by using the definition of
$\|\cdot \|_{\K_\A}$ along the proof of~\cite[Theorem~2.1]{CaSt}. An application
of Corollary~\ref{a-comp aprox} completes the proof.
\end{proof}

\begin{corollary}\label{a-min sur}
Let $\A$ be a right-accessible Banach operator ideal. Then,
$$
\hskip 3cm \K_{\A}=(\A^{min})^{sur}, \hskip 3cm isometrically.
$$
\end{corollary}

\begin{proposition}\label{tot-acces}
If $\A$ is right-accessible, then $\K_\A$ is totally-accessible. In particular,
$$
\hskip 3cm \K_\A=(\K_\A^{min})^{sur}, \hskip 2.5cm  isometrically.
$$
\end{proposition}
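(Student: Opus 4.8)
The plan is to build on Corollary~\ref{a-min sur}, which identifies $\K_\A$ with $(\A^{min})^{sur}$ isometrically. Since $\A^{min}$ is minimal it is both right- and left-accessible, and the whole point is to show that passing to the surjective hull preserves (indeed upgrades to) total accessibility. So I would fix a finite rank $T\in\mathcal L(E;F)$ and $\ep>0$ and aim to produce the factorization $T=I_F\,S\,Q_E$ required by the definition of total accessibility, with $\|S\|_{\K_\A}\le(1+\ep)\|T\|_{\K_\A}$.

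The first step is to move the problem to $\ell_1(B_E)$ via the canonical surjection $\q_E$. By Corollary~\ref{a-min sur}, $\|T\|_{\K_\A}=\|T\q_E\|_{\A^{min}}$, and $T\q_E\colon\ell_1(B_E)\to F$ is finite rank. Using that $\A^{min}$ is right- and left-accessible, applied in turn, I would factor $T\q_E$ simultaneously through a finite-dimensional subspace $I_F\colon Y\hookrightarrow F$ of its range and a finite-codimensional quotient $\pi_Z\colon\ell_1(B_E)\to\ell_1(B_E)/Z$ of its domain (with $Z\subseteq\ker(T\q_E)$), obtaining $R\colon\ell_1(B_E)/Z\to Y$ with $T\q_E=I_F\,R\,\pi_Z$ and $\|R\|_{\A^{min}}\le(1+\ep)^2\|T\|_{\K_\A}$.

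The step I expect to be the main obstacle is to convert this factorization of $T\q_E$ into one of $T$ itself through a finite-codimensional quotient of $E$. I would take $X=\ker T$, $Q_E\colon E\to E/X$, and write $T=I_F\,S\,Q_E$ with $S\colon E/X\to Y$. The key observation is that $Z\subseteq\ker(T\q_E)=\ker(Q_E\q_E)$, so the metric surjection $Q_E\q_E\colon\ell_1(B_E)\twoheadrightarrow E/X$ descends to a metric surjection $V\colon\ell_1(B_E)/Z\twoheadrightarrow E/X$ with $Q_E\q_E=V\pi_Z$. Comparing $I_F\,S\,V\,\pi_Z=T\q_E=I_F\,R\,\pi_Z$ and cancelling the injection $I_F$ and the surjection $\pi_Z$ gives $S\,V=R$. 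Now, because $\K_\A=(\A^{min})^{sur}$ is surjective, its norm may be computed through any metric surjection onto the domain and not only the canonical one; this rests on the lifting (projectivity) of $\ell_1(B_{E/X})$ through $V$ with norm arbitrarily close to one. Consequently $\|S\|_{\K_\A}\le\|S\,V\|_{\A^{min}}=\|R\|_{\A^{min}}\le(1+\ep)^2\|T\|_{\K_\A}$, and since $\ep$ was arbitrary this establishes total accessibility of $\K_\A$.

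Finally, for the displayed identity I would argue as follows. Being totally accessible, $\K_\A$ is in particular right-accessible, so by the characterization of right-accessibility $\K_\A^{min}=\K_\A\circ\overF$ isometrically. Applying Proposition~\ref{a-comp con F} to the ideal $\K_\A$ gives $\K_{\K_\A}=(\K_\A\circ\overF)^{sur}=(\K_\A^{min})^{sur}$, and since $\K_{\K_\A}=\K_\A$ we obtain $\K_\A=(\K_\A^{min})^{sur}$ isometrically.
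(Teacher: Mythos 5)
There is a genuine gap, and it sits at the heart of your argument: the claim that, since $\A^{min}$ is right- and left-accessible, these two properties ``applied in turn'' yield a factorization $T\q_E=I_F\,R\,\pi_Z$ through a finite-dimensional subspace of $F$ \emph{and} a finite-codimensional quotient of $\ell_1(B_E)$ simultaneously, with $\|R\|_{\A^{min}}\le(1+\ep)^2\|T\q_E\|_{\A^{min}}$. That statement is precisely the assertion that $\A^{min}$ is \emph{totally} accessible, i.e.\ the very upgrade from one-sided to two-sided accessibility that the proposition is about, and it does not follow by chaining the one-sided properties. In the Defant--Floret framework (see \cite[Section~21]{DF}), right-accessibility is a factorization statement only for operators whose \emph{domain} is finite-dimensional, and left-accessibility only for operators whose \emph{codomain} is finite-dimensional; $T\q_E$ acts between two infinite-dimensional spaces, so neither applies, and the obvious way to start the chain (corestrict $T\q_E$ onto its finite-dimensional image so that left-accessibility becomes applicable) fails because corestriction can strictly increase the $\A^{min}$-norm --- exactly the failure of injectivity. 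Note that the \emph{existence} of the factorization is never the issue, since $\ker(T\q_E)$ is automatically finite-codimensional and the image is finite-dimensional; all the content is in the norm estimate. Moreover, the principle you are implicitly invoking, ``right- plus left-accessible implies totally accessible,'' is simply false: every minimal ideal is right- and left-accessible \cite[Corollary~25.3]{DF}, yet $\mathcal N=\mathcal N_1$ is not totally accessible (total accessibility of $\mathcal N$ would force $\nu(T_z)=\pi(z;E'\otimes F)$ for every finite rank tensor $z\in E'\otimes F$ with induced operator $T_z$, and this fails whenever $E'$ lacks the approximation property).

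The paper never attempts this factorization by hand. It writes $\K_\A=\A^{sur}\circ\K$ (Proposition~\ref{a-comp con F}), quotes \cite[Ex~21.1]{DF} for the fact that $\A$ right-accessible implies $\A^{sur}$ totally accessible --- this is exactly the upgrade you announce in your opening paragraph but never prove, and it rests on the special structure of the $\ell_1$-spaces entering the surjective hull rather than on accessibility of $\A^{min}$ alone --- then notes that $\K$ is injective and left-accessible, and concludes with the composition theorem \cite[Proposition~21.4]{DF}. By contrast, the portions of your argument surrounding the gap are sound and even illuminating: the reduction through $\q_E$, the cancellation $SV=R$, and the $\ell_1$-lifting estimate $\|S\|_{\K_\A}\le\|SV\|_{\A^{min}}$ correctly descend a factorization of $T\q_E$ to one of $T$ (this is morally why surjective hulls improve accessibility), and your closing derivation of $\K_\A=(\K_\A^{min})^{sur}$ coincides with the paper's. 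But as written, the proof assumes the crux instead of establishing it.
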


\begin{proof}
By Proposition~\ref{a-comp con F}, $\K_{\A}=\A^{sur} \circ \K$. As $\A$ is right-accessible, by
\cite[Ex 21.1]{DF}, $\A^{sur}$ is totally-accessible. Also, $\K$ is injective and surjective, then
$\K$ is injective and left-accessible, \cite[21.2]{DF}. Hence, by \cite[Proposition~21.4]{DF},
$\K_{\A}$ is totally-accessible.
Now, since $\K_\A=\K_{\K_\A}$ and $\K_\A$ is totally-accessible, a direct application of
Corollary~\ref{a-min sur} gives the result.
\end{proof}

Notice that for $\A$ and $\mathcal B$ two Banach operator ideals
such that $\A^{sur}=\mathcal B^{sur}$, Proposition~\ref{a-comp con F} gives $\K_\A=\K_{\mathcal B}$.
If, in addition,
$\A$ and $\mathcal B$ are right-accessible and $\A^{min}=\mathcal B^{min}$, then we also have
$\K_\A=\K_{\mathcal B}$. Combining this two facts, we obtain different descriptions of the ideal of
$p$-compact operators and a factorization via the dual ideal of the $p$-summing operators. This last
result was recently also obtained by Ain, Lillemets and Oja~\cite[Corollary~4.9]{AiLiOj}
independently.

\begin{example}\label{Factorizacion K_p} Let $1\le p\le \infty$. The following isometric
identities hold.
\begin{enumerate}[\upshape (a)]
\item $\K_p=(\K_p^{min})^{sur}.$
\item $\K_p=\K_{\Pi_p^{d}}=\K_{\mathcal N_p^d}.$
\item $\K_p= \Pi_p^{d}\circ \K,. $
\end{enumerate}
\end{example}

\begin{proof} By  \cite[Proposition~3.9]{GaLaTur}, $\K_p$ is totally-accessible. Then, an
application of  Corollary~\ref{a-min sur} gives  $\K_p=(\K_p^{min})^{sur}$, and (a) is obtained.

To prove that $\K_p=\K_{\Pi_p^{d}}$, note that as a consequence
of~\cite[Proposition~3.9]{GaLaTur}, both ideals $\K_p^{min}$ and
$(\Pi_p^{d})^{min}$ coincide isometrically. Also,  $\Pi_p^{d}$ is
totally-accessible \cite[Remark~3.7]{GaLaTur}. Then, using  (a) and
Corollary~\ref{a-min sur} we have $\K_p=(\Pi_p^d)^{min \
sur}=\K_{\Pi_p^d}$. For the other identity, note that $\mathcal
N^p=(\mathcal N_p^d)^{min}$. Since $\mathcal N_p$ is
left-accessible, $\mathcal N_p^d$ is right-accessible. Another
application of  Corollary~\ref{a-min sur} gives $\K_{\mathcal
N_p^d}=(\mathcal N_p^d)^{min \ sur}=\mathcal N^{p \ sur}=\K_p$, and
(b) is proved.

Statement (c) is a direct application of (b) and Proposition~\ref{a-comp con F}. In fact,
$\Pi_p^d$ is a surjective ideal and therefore $\K_p=\K_{\Pi_p^{d}}=\Pi_p^{d}\circ \K$ holds
isometrically.
\end{proof}

In general, $\K_\A$ is not an injective ideal (consider, for instance, the ideal of
$p$-compact operators  \cite[Proposition~3.4]{GaLaTur}). However, an injective  ideal $\A$ gives an
injective ideal of $\A$-compact operators. To show this, we need a preliminary lemma. Although,  we
believe that it should be a known result, we have not found
it in the literature as stated here and we prefer to include a proof.

\begin{lemma}\label{Aminsur}
Let $\A$ be a Banach operator ideal. Then,
$(\A^{inj \ min})^{sur}= (\A^{min \ inj})^{sur}$, isometrically.
\end{lemma}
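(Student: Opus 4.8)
The plan is to reduce the statement to a single coincidence between the sizes of $\A^{inj}$-compact sets in $F$ and of $\A^{min}$-compact sets in the injective space $\ell_\infty(B_{F'})$, and then to establish that coincidence at the level of $\A$-compact sets.

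First I would rewrite both sides using the results already at hand together with the (isometric) commutation of the injective and surjective hulls, $\mathcal B^{inj\,sur}=\mathcal B^{sur\,inj}$ \cite{DF}. Since $\A^{inj}$ is injective it is right-accessible, so Corollary~\ref{a-min sur} gives $(\A^{inj\ min})^{sur}=\K_{\A^{inj}}$. Since $\A^{min}$ is minimal it is right-accessible and $(\A^{min})^{min}=\A^{min}$, so Corollary~\ref{a-min sur} gives $(\A^{min})^{sur}=\K_{\A^{min}}$; combining this with the commutation above, $(\A^{min\ inj})^{sur}=\big((\A^{min})^{sur}\big)^{inj}=(\K_{\A^{min}})^{inj}$. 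Thus the Lemma is equivalent to the isometric identity $\K_{\A^{inj}}=(\K_{\A^{min}})^{inj}$. Spelled out on an operator $T\in\mathcal L(E;F)$, and recalling $\iota_F\colon F\hookrightarrow\ell_\infty(B_{F'})$, what must be proved is
$$
\m_{\A^{inj}}(T(B_E);F)=\m_{\A^{min}}\big(\iota_F T(B_E);\ell_\infty(B_{F'})\big).
$$
I would prove this through the intermediate quantity $\m_{\A}\big(\iota_F T(B_E);\ell_\infty(B_{F'})\big)$, i.e. via two set-level equalities.

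The first equality, $\m_{\A^{inj}}(K;F)=\m_{\A}(\iota_F K;\ell_\infty(B_{F'}))$ for every relatively $\A^{inj}$-compact $K\subset F$, is a direct translation of the definition of the injective hull. For ``$\ge$'' one pushes a witnessing factorization $K\subset S(M)$, $S\in\A^{inj}(X;F)$, $M\subset B_X$ compact, forward through $\iota_F$: then $\iota_F S\in\A(X;\ell_\infty(B_{F'}))$ with $\|\iota_F S\|_\A=\|S\|_{\A^{inj}}$. For ``$\le$'' one pulls back a factorization $\iota_F K\subset R(M)$ with $R\in\A(X;\ell_\infty(B_{F'}))$: since $\iota_F(F)$ is closed, the compact set $M_0=M\cap R^{-1}(\iota_F(F))$ still covers $\iota_F K$, the co-restriction $\iota_F^{-1}R|_{\overline{\mathrm{span}}\,M_0}$ lands in $F$ and witnesses relative $\A^{inj}$-compactness of $K$ with the same size. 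Both steps keep $M$ inside the relevant unit ball, so the identity is isometric.

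The second equality, $\m_{\A}(C;Y)=\m_{\A^{min}}(C;Y)$ for $Y=\ell_\infty(B_{F'})$, is where the real work lies, and I expect it to be the main obstacle. Since $\A^{min}\subset\A$ the inequality ``$\le$'' is automatic, so the content is that on an injective (in particular, metrically approximable) codomain the minimal kernel ``sees'' every $\A$-compact set. Using Proposition~\ref{paso por ell1} I would write $C\subset R(M)$ with $R=T\circ S\in\A\circ\overline{\mathcal F}(\ell_1;Y)$, $M\subset B_{\ell_1}$ compact and $\|T\|_\A\|S\|$ close to $\m_\A(C;Y)$, and then upgrade $R$ to an element of $\A^{min}$ of no larger norm. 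For this I would use that $Y=\ell_\infty(B_{F'})$ has the metric approximation property to pick finite rank $u_\lambda$ with $\|u_\lambda\|\le1$ and $u_\lambda\to\mathrm{id}_Y$ pointwise; then $u_\lambda R=u_\lambda\, T\, S\in\overline{\mathcal F}\circ\A\circ\overline{\mathcal F}=\A^{min}$ with $\|u_\lambda R\|_{\A^{min}}\le\|T\|_\A\|S\|$. The delicate point is that $(u_\lambda R)$ converges to $R$ in the $\A^{min}$-norm: here I would split $S=S_0+S_r$ with $S_0$ finite rank and $\|S_r\|$ arbitrarily small, observe that $(u_\lambda-u_\mu)TS_0$ is a fixed finite sum of rank-one operators whose vectors tend to $0$ and hence goes to $0$ in $\A^{min}$-norm, while the uncontrolled factor $\|u_\lambda-u_\mu\|\le2$ only multiplies the small residual $\|(u_\lambda-u_\mu)TS_r\|_{\A^{min}}\le2\|T\|_\A\|S_r\|$. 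This makes $(u_\lambda R)$ an $\A^{min}$-Cauchy net; its limit is $R$, so $\|R\|_{\A^{min}}\le\|T\|_\A\|S\|$ and $\m_{\A^{min}}(C;Y)\le\m_\A(C;Y)$. Alternatively, this last fact is the classical statement that every Banach operator ideal is accessible into a codomain with the metric approximation property, and could simply be cited from \cite{DF}. Chaining the two equalities yields $\K_{\A^{inj}}=(\K_{\A^{min}})^{inj}$, and hence the Lemma.
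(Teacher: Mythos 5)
Your proof is correct, but it takes a genuinely different route from the paper's. The paper's own proof is a short formal reduction: by definition of the surjective hull, $T$ belongs to either side if and only if $T\circ \q_E$ belongs to the corresponding ideal on $\ell_1(B_E)$, and since $(\ell_1(B_E))'$ has the approximation property, \cite[Proposition~25.11.2]{DF} gives $\A^{inj\ min}(\ell_1(B_E);F)=\A^{min\ inj}(\ell_1(B_E);F)$ isometrically --- the entire min/inj commutation is delegated to that citation. You instead translate both sides into ideals of compact-type operators, $(\A^{inj\ min})^{sur}=\K_{\A^{inj}}$ and $(\A^{min\ inj})^{sur}=(\K_{\A^{min}})^{inj}$, using Corollary~\ref{a-min sur} together with the commutation of injective and surjective hulls (\cite[Proposition~8.5.12]{PIE}, which the paper itself invokes in the proposition following this lemma), and then establish $\K_{\A^{inj}}=(\K_{\A^{min}})^{inj}$ at the level of sets and of the size $\m_\A$: first $\K_{\A^{inj}}=(\K_\A)^{inj}$, by pushing factorizations forward through $\iota_F$ and pulling them back --- your restriction to $\overline{\mathrm{span}}\,M_0$ is exactly the fix needed, since $R$ maps $\overline{\mathrm{span}}\,M_0$, but not all of $X$, into $\iota_F(F)$ --- and second $\m_\A=\m_{\A^{min}}$ on subsets of $\ell_\infty(B_{F'})$, via the net argument based on the metric approximation property of this $C(K)$-space, where the splitting $S=S_0+S_r$ and the rank-one identity $\|\phi\underline{\otimes}y\|_{\A^{min}}=\|\phi\|\,\|y\|$ make the Cauchy estimate go through. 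That second step is where the two proofs meet: you prove by hand, on the codomain side, essentially the same accessibility phenomenon that the paper cites from Defant--Floret on the domain side; both ultimately rest on the approximation property of $\ell_\infty(\Gamma)$-spaces. Your route is longer but self-contained modulo standard facts, and it yields two by-products of independent interest: the isometric identity $\K_{\A^{inj}}=(\K_\A)^{inj}$, which already gives the paper's subsequent proposition (injective $\A$ implies $\K_\A$ injective) without any appeal to this lemma, and the coincidence of $\m_\A$ with $\m_{\A^{min}}$ on codomains with the metric approximation property. Since Corollary~\ref{a-min sur} and Proposition~\ref{paso por ell1} precede the lemma and do not depend on it, there is no circularity in your use of them.
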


\begin{proof} Let $E$ and $F$ be Banach spaces.
An operator $T$ belongs to $(\A^{inj \ min})^{sur}(E;F)$ if and
only if $\q_E \circ T \in \A^{inj\  min}(\ell_1(B_E);F)$,  where $\q_E\colon
\ell_1(B_E)\twoheadrightarrow E$ is the canonical surjection. Since $(\ell_1(B_E))'$ has the
approximation property, by \cite[Proposition~25.11.2]{DF}, $\A^{inj \ min}(\ell_1(B_E);F)=A^{min \
inj}(\ell_1(B_E);F)$ isometrically. Therefore, $\q_E \circ T \in \A^{min \ inj
}(\ell_1(B_E);F)$. Equivalently, $T \in (\A^{min \ inj})^{sur}(E;F)$, which proves the lemma.
\end{proof}

\begin{proposition}
Let $\A$ be an injective Banach operator ideal then, $\K_\A$ is also injective. That is,
$$
\hskip 3cm \K_{\A}=\K_{\A}^{inj}, \hskip 3cm  isometrically.
$$
\end{proposition}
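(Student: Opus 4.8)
The plan is to reduce the statement to the identity $\K_\A=(\A^{min})^{sur}$ from Corollary~\ref{a-min sur} and then to exploit that the injective and surjective hull procedures commute. First I would note that, since $\A$ is injective, $\A=\A^{inj}$ and, as recalled in the excerpt, $\A$ is right-accessible; hence Corollary~\ref{a-min sur} applies and gives $\K_\A=(\A^{min})^{sur}$ isometrically. Substituting $\A=\A^{inj}$ yields $\K_\A=(\A^{inj \ min})^{sur}$, and Lemma~\ref{Aminsur} rewrites this as $\K_\A=(\A^{min \ inj})^{sur}$, isometrically.

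The key step is a general commutation of the two hull procedures: for every Banach operator ideal $\mathcal B$ one has $(\mathcal B^{inj})^{sur}=(\mathcal B^{sur})^{inj}$, isometrically. I would prove this by unwinding both definitions. Fix $T\in\mathcal L(E;F)$ and let $\q_E\colon \ell_1(B_E)\twoheadrightarrow E$ and $\iota_F\colon F\hookrightarrow \ell_\infty(B_{F'})$ be the canonical surjection and injection. Then $T\in(\mathcal B^{inj})^{sur}(E;F)$ if and only if $T\circ\q_E\in\mathcal B^{inj}(\ell_1(B_E);F)$, that is, $\iota_F\circ T\circ\q_E\in\mathcal B$; while $T\in(\mathcal B^{sur})^{inj}(E;F)$ if and only if $\iota_F\circ T\in\mathcal B^{sur}(E;\ell_\infty(B_{F'}))$, that is, $(\iota_F\circ T)\circ\q_E=\iota_F\circ T\circ\q_E\in\mathcal B$. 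The two conditions coincide, and both norms equal $\|\iota_F\circ T\circ\q_E\|_{\mathcal B}$, so the two hulls agree isometrically. Intuitively, $\iota_F$ acts only on the codomain and $\q_E$ only on the domain, so the order in which they are applied is immaterial.

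Finally I would apply this commutation with $\mathcal B=\A^{min}$ to obtain $(\A^{min \ inj})^{sur}=(\A^{min \ sur})^{inj}=((\A^{min})^{sur})^{inj}$, and then invoke Corollary~\ref{a-min sur} once more, now in the form $(\A^{min})^{sur}=\K_\A$, to conclude $((\A^{min})^{sur})^{inj}=\K_\A^{inj}$. Chaining the identities gives
\[
\K_\A=(\A^{min})^{sur}=(\A^{inj \ min})^{sur}=(\A^{min \ inj})^{sur}=(\A^{min \ sur})^{inj}=\K_\A^{inj},
\]
isometrically, which is exactly the claim.

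The only genuine obstacle is the commutation of the injective and surjective hulls; since it is not recorded earlier in the paper, I would prove it inline as above rather than cite it. Everything else is a purely formal chaining of isometric identities, so no norm constants are lost and the final equality is isometric.
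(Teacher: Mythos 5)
Your proposal is correct and is essentially the paper's own proof read in reverse: it establishes the same chain of isometric identities $\K_\A=(\A^{min})^{sur}=(\A^{inj \ min})^{sur}=(\A^{min \ inj})^{sur}=(\A^{min \ sur})^{inj}=\K_\A^{inj}$, resting on Corollary~\ref{a-min sur}, Lemma~\ref{Aminsur}, and the commutation of the injective and surjective hulls. The only difference is that the paper quotes this commutation from \cite[Proposition~8.5.12]{PIE}, whereas you verify it inline from the $\q_E$/$\iota_F$ characterizations recorded in Section~\ref{operators}; your verification is correct (both conditions and both norms reduce to $\iota_F\circ T\circ \q_E\in\mathcal B$ and $\|\iota_F\circ T\circ \q_E\|_{\mathcal B}$), and it is in fact the standard proof of the cited fact.
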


\begin{proof}
Since $\A$ is injective, it is right-accessible.
Applying
Corollary~\ref{a-min sur}, \cite[Proposition~8.5.12]{PIE} and Lemma~\ref{Aminsur}  we get
$$
\K_\A^{inj}=(\A^{min \ sur})^{inj}=(\A^{min \ inj})^{sur}=(\A^{inj \
min})^{sur}=(\A^{min})^{sur}=\K_\A.
$$
All the identities are isometric identifications, thus the proof is
complete.
\end{proof}

Notice that there are non injective Banach operator ideals that may induce an injective ideal
of compact operators, this is the case of $\overline{\mathcal F}$ and $\K_{\overline{\mathcal
F}}=\K$. For operators ideals $\A$ such that $\K_\A$ is injective, we show that a set is
$\A$-compact regardless it is considered as  set of a Banach space $F$ or  as set of a
closed subspace  $E$ of $F$, with equal size.
% For this, recall that an operator ideal
% $\A$ is injective if for each metric injection $\iota\colon E \hookrightarrow F$ a map $T\in
% \mathcal L(E;F)$ belongs to  $\A$ whenever $\iota\circ T$ belongs to $\A$ and
% $\|T\|_{\A}=\|\iota\circ T\|_{\A}$, \cite[9.7]{DF}.
\begin{corollary}\label{m_A no cambia} Let $E$  be a Banach space,
$K\subset E$ a subset and $\A$  a Banach operator ideal such that $\K_\A$ is injective. Then,
$K$ is relatively $\A$-compact in $E$ if and only if $K$ is relatively $\A$-compact in $F$,
for every Banach space $F$ containing $E$. Moreover,  $\m_\A(K;E)=\m_\A(K;F)$.
In particular, the result applies to any injective Banach operator ideal $\A$.
\end{corollary}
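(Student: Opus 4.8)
The plan is to encode the relative $\A$-compactness of $K$, together with its size $\m_\A$, into the membership of a single fixed operator in $\K_\A$, and then to let the injectivity of $\K_\A$ transport this information along the inclusion $E \hookrightarrow F$. Throughout, write $j\colon E \hookrightarrow F$ for the canonical (isometric) inclusion, and assume $K$ is bounded, since otherwise $K$ is relatively $\A$-compact in neither space and the statement is vacuous. The starting observation is the standard reformulation of injectivity: a Banach operator ideal $\mathcal B$ is injective if and only if it enjoys the metric injection property, namely, for every metric injection $i\colon G \hookrightarrow H$ and every operator $S$ with values in $G$ one has $S \in \mathcal B$ exactly when $i\circ S \in \mathcal B$, and in that case $\|S\|_{\mathcal B} = \|i\circ S\|_{\mathcal B}$. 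This follows from the definition of $\mathcal B^{inj}$ together with the $1$-injectivity of the spaces $\ell_\infty(\Gamma)$ (see \cite{DF}). I would apply it with $\mathcal B = \K_\A$ and $i = j$.

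Next I would attach an operator to the set $K$. Let $\Gamma = K$ and define $u\colon \ell_1(\Gamma) \to E$ on the canonical basis by $u(e_x) = x$, extending by linearity and continuity; this is legitimate because $\sup_{x\in K}\|x\| < \infty$. Then $K \subset u(B_{\ell_1(\Gamma)}) \subset \overline{\co}\{K\}$, the closed absolutely convex hull of $K$. Since subsets of relatively $\A$-compact sets are again relatively $\A$-compact, the identity $\m_\A(K;E) = \m_\A(\overline{\co}\{K\};E)$ recorded in the text, and the monotonicity of $\m_\A$ under inclusion (both immediate from the definition), force the three sets $K$, $u(B_{\ell_1(\Gamma)})$ and $\overline{\co}\{K\}$ to share the same relative $\A$-compactness and the same $\m_\A$-size in any ambient space containing them. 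By the definition of $\K_\A$ and of $\|\cdot\|_{\K_\A}$, this means $K$ is relatively $\A$-compact in $E$ if and only if $u \in \K_\A(\ell_1(\Gamma);E)$, in which case $\|u\|_{\K_\A} = \m_\A(K;E)$. The crucial point is that $j\circ u\colon \ell_1(\Gamma)\to F$ is given by the very same formula $x \mapsto x$, so $(j\circ u)(B_{\ell_1(\Gamma)})$ equals $u(B_{\ell_1(\Gamma)})$ now viewed inside $F$; hence $K$ is relatively $\A$-compact in $F$ if and only if $j\circ u \in \K_\A(\ell_1(\Gamma);F)$, with $\|j\circ u\|_{\K_\A} = \m_\A(K;F)$.

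It remains to combine the two steps. By the metric injection property of $\K_\A$ applied to $j$ and to the operator $u$, we obtain $u \in \K_\A(\ell_1(\Gamma);E)$ if and only if $j\circ u \in \K_\A(\ell_1(\Gamma);F)$, and in that case $\|u\|_{\K_\A} = \|j\circ u\|_{\K_\A}$. Reading this back through the previous paragraph gives exactly the assertion that $K$ is relatively $\A$-compact in $E$ if and only if it is relatively $\A$-compact in $F$, together with $\m_\A(K;E) = \m_\A(K;F)$. Finally, the last sentence of the statement is immediate: if $\A$ is itself injective, then $\K_\A$ is injective by the preceding proposition, so the hypothesis is satisfied.

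The step I expect to be delicate is invoking the metric injection property in the correct generality: the inclusion $E \hookrightarrow F$ is an arbitrary metric injection, not the canonical embedding into an $\ell_\infty$-space used to \emph{define} $\K_\A^{inj}$, so one must be sure that injectivity of $\K_\A$ genuinely yields the \emph{isometric} equality $\|u\|_{\K_\A} = \|j\circ u\|_{\K_\A}$ for this general $j$. Everything else is routine bookkeeping around the identity $\m_\A(K;E) = \m_\A(\overline{\co}\{K\};E)$ and the monotonicity of $\m_\A$.
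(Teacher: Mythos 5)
Your proof is correct, and at its core it runs on the same engine as the paper's: attach to $K$ an operator whose image of the unit ball captures $K$, compose with the inclusion $j\colon E\hookrightarrow F$, and let the injectivity of $\K_\A$ carry membership and norm back from $F$ to $E$. Where you genuinely differ is in the auxiliary operator and the supporting lemma. The paper first reduces to $K$ closed, convex and balanced (hence compact) and invokes \cite[Lemma~4.11]{RYAN} to produce a Banach space $G$ and $T\in\mathcal L(G;E)$ with $T(B_G)=K$ exactly; then $\iota\circ T\in\K_\A(G;F)$ together with $\K_\A=\K_\A^{inj}$ gives the conclusion. You avoid Ryan's lemma altogether by taking the canonical map $u\colon\ell_1(K)\to E$, $e_x\mapsto x$, and squeezing $K\subset u(B_{\ell_1(K)})\subset\overline{\co}\{K\}$, so that $u\in\K_\A(\ell_1(K);E)$ precisely when $K$ is relatively $\A$-compact in $E$, with $\|u\|_{\K_\A}=\m_\A(K;E)$, and likewise for $j\circ u$ in $F$; this needs only boundedness of $K$ plus the recorded identity $\m_\A(K;E)=\m_\A(\overline{\co}\{K\};E)$, so it is more self-contained. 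You also correctly isolate and resolve the one delicate point, which the paper leaves implicit: injectivity of $\K_\A$ is defined through the canonical embedding $\iota_E\colon E\to\ell_\infty(B_{E'})$, and the upgrade to an isometric transfer criterion along an arbitrary metric injection $j$ is exactly the standard argument via the $1$-injectivity of $\ell_\infty$-spaces (a norm-one $r\colon F\to\ell_\infty(B_{E'})$ with $r\circ j=\iota_E$); the paper's own step from $\iota\circ T\in\K_\A(G;F)$ to $T\in\K_\A(G;E)$ with equal norm rests on the same fact.
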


\begin{proof}
One implication is clear. For the other one, let $\iota\colon E \hookrightarrow F$ a metric
injection such that $\iota(K)\subset F$ is relatively $\A$-compact. We may assume that $K$ is
convex, balanced and closed. Since,  $K$ is compact, by \cite[Lemma~4.11]{RYAN}, there are a Banach
space $G$ and a operator $T\in \mathcal L(G;E)$ such that $T(B_G)=K$. Then,  $\iota \circ T \in
\K_\A(G;F)$. Since $\K_\A=\K_\A^{inj}$,  $K$ is relatively $\A$-compact in $E$ and
$\m_{\A}(K;E)=\m_{\A}(K;F)$.
\end{proof}

\subsection{On the maximal hull of $\A$-compact operators}
We examine the maximal hull of $\K_\A$ for right-accessible ideals $\A$. The maximal hull of
$\A$, $\A^{max}$ consists of all the operators $T$ such that $R\circ T\circ S \in \A$ for any
approximable operators $R$ and $S$ and $\|T\|_{\A^{max}}=\sup\{\|R\circ T\circ S \|_\A\colon
\|R\|,\|S\|\le1 \}$.  The ideal is maximal if $\A=\A^{max}$, isometrically.

\begin{proposition}\label{max}
Let $\A$ be a right-accessible Banach operator ideal, then
$$
\hskip 3cm  \K_\A^{max}=(\A^{max})^{sur}, \hskip 2cm   isometrically.
$$
In particular, if $\A$ is maximal, then $\K_\A^{max}=\A^{sur}$.
\end{proposition}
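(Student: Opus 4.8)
The plan is to reduce the statement to the surjective-hull description of $\K_\A$ proved earlier and then to commute the maximal-hull operation past the surjective hull and the minimal kernel. Since $\A$ is right-accessible, Corollary~\ref{a-min sur} gives $\K_\A=(\A^{min})^{sur}$ isometrically, so the whole proposition amounts to the isometric identity
\[
((\A^{min})^{sur})^{max}=(\A^{max})^{sur};
\]
the ``in particular'' clause then follows at once, since $\A=\A^{max}$ for maximal $\A$.

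The core step I would carry out is the commutation of the maximal hull with the surjective hull: for an arbitrary Banach operator ideal $\mathcal B$,
\[
(\mathcal B^{sur})^{max}=(\mathcal B^{max})^{sur}, \quad \text{isometrically}.
\]
This rests on two standard preservation facts (see \cite{DF}): the surjective hull of a maximal ideal is maximal, and the maximal hull of a surjective ideal is surjective. Granting them, the argument is purely formal. From the contractive inclusion $\mathcal B\subset\mathcal B^{max}$, monotonicity of $\cdot^{sur}$ and $\cdot^{max}$ gives $(\mathcal B^{sur})^{max}\subset((\mathcal B^{max})^{sur})^{max}$, and the right-hand side equals $(\mathcal B^{max})^{sur}$ because that ideal is maximal. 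Symmetrically, from $\mathcal B\subset\mathcal B^{sur}$ we get $\mathcal B^{max}\subset(\mathcal B^{sur})^{max}$, hence $(\mathcal B^{max})^{sur}\subset((\mathcal B^{sur})^{max})^{sur}=(\mathcal B^{sur})^{max}$, the last equality holding because $(\mathcal B^{sur})^{max}$ is surjective. The two inclusions are contractive, so their conjunction is an isometric identity.

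It remains to note that the maximal hull depends only on the finite-dimensional part of an ideal: $\A^{min}$ and $\A$ coincide isometrically on operators between finite-dimensional spaces, hence so do $\A^{min}$ and $\A^{max}$, and therefore $(\A^{min})^{max}=\A^{max}$ isometrically. Applying the commutation relation with $\mathcal B=\A^{min}$ and inserting this identity yields
\[
\K_\A^{max}=((\A^{min})^{sur})^{max}=((\A^{min})^{max})^{sur}=(\A^{max})^{sur},
\]
as desired.

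The main obstacle is the commutation step, which is the only place where real ideal-theoretic input is needed, and the care there is twofold. One must invoke the two preservation facts in their isometric form, and one must combine the resulting mutual contractive inclusions correctly so as to obtain an isometric identity rather than a bare equality of ideals. By contrast, the right-accessibility hypothesis enters the proof only through Corollary~\ref{a-min sur}; everything after the reduction is a formal manipulation of the procedures $\cdot^{max}$, $\cdot^{sur}$ and $\cdot^{min}$.
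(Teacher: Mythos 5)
Your argument is correct and follows essentially the same route as the paper: both proofs reduce the statement via Corollary~\ref{a-min sur} to the chain $\K_\A^{max}=((\A^{min})^{sur})^{max}=((\A^{min})^{max})^{sur}=(\A^{max})^{sur}$, so the only divergence is how the two exchange identities are justified. The paper simply cites them: $(\A^{max})^{sur}=(\A^{sur})^{max}$ is \cite[Proposition~8.7.14]{PIE} and $(\A^{min})^{max}=\A^{max}$ is \cite[Proposition~8.7.15]{PIE}, whereas you derive them. Your derivation of $(\A^{min})^{max}=\A^{max}$ --- from the isometric agreement of $\A^{min}$ and $\A$ on finite-dimensional spaces plus the finite-dimensional determination of the maximal hull --- is precisely the standard proof of Pietsch's result and is fine. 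The one point needing care is your second ``preservation fact'', that the maximal hull of a surjective ideal is surjective. Your first fact (the surjective hull of a maximal ideal is maximal) is genuinely standard, provable independently via the tensor-norm representation of maximal ideals in \cite{DF}; but the second is, for practical purposes, equivalent to the commutation identity you are using it to prove: in the literature it is usually obtained as a corollary of \cite[Proposition~8.7.14]{PIE}, so a bare ``see \cite{DF}'' leaves the only genuinely ideal-theoretic step of your proof resting on an unlocated citation and risks circularity. The fact is true and does admit an independent proof (bound the compressions $\|Q_L T I_M\|_{\mathcal B}$ by compressions of $T\circ\q_E$, using surjectivity of $\mathcal B$, a finite $\delta$-net of $B_M$ realized inside $\ell_1(B_E)$, and the metric lifting property of $\ell_1$-spaces), but if you do not write that out, the economical repair is to cite \cite[Proposition~8.7.14]{PIE} directly --- at which point your proof coincides with the paper's.
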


\begin{proof} Any Banach operator ideal satisfies the isometric identities: $(\A^{max})^{sur}=(\A^{sur})^{max}$ and $(\A^{min})^{max}=\A^{max}$, see
\cite[Proposition 8.7.14]{PIE} and \cite[Proposition 8.7.15]{PIE}. Now, by Corollary~\ref{a-min sur} we have
$$
\K_\A^{max}=(\A^{min \ sur})^{max}=(\A^{min \ max})^{sur}=(\A^{max})^{sur}. \qedhere
$$
\end{proof}

The following corollary follows from Proposition~\ref{a-comp con F} and Proposition~\ref{max}.

\begin{corollary} If $\A$ is right-accessible and maximal then
$$
\hskip 3cm  \K_\A=\K_\A^{max} \circ \K,  \hskip 2cm  isometrically.
$$
\end{corollary}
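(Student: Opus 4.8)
The plan is to obtain the identity by concatenating the two factorizations that have already been established for $\K_\A$ and for its maximal hull, so that essentially no new computation is required. First I would invoke Proposition~\ref{a-comp con F}, which holds for any Banach operator ideal and in particular yields the isometric identity $\K_\A = \A^{sur} \circ \K$. This reduces the task to recognizing the left factor $\A^{sur}$ as $\K_\A^{max}$.

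Next I would apply Proposition~\ref{max}. Since the corollary assumes $\A$ is both right-accessible and maximal, the ``in particular'' clause of that proposition gives precisely $\K_\A^{max} = \A^{sur}$, isometrically. Substituting this into the factorization from the previous step produces $\K_\A = \K_\A^{max} \circ \K$. As each of the two inputs is an isometric identification, the resulting identity is isometric as well, which is exactly the assertion.

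I do not expect a genuine obstacle here: the argument is a two-line combination of results proved earlier in the section. The only point deserving a moment's attention is bookkeeping of hypotheses, namely checking that right-accessibility is what Proposition~\ref{max} requires and that maximality is exactly the condition triggering its simplified conclusion $\K_\A^{max}=\A^{sur}$; both conditions are among the standing hypotheses of the corollary, so no auxiliary argument is needed. One may also observe that right-accessibility enters only through Proposition~\ref{max}, since the factorization $\K_\A=\A^{sur}\circ\K$ of Proposition~\ref{a-comp con F} is valid without any accessibility assumption.
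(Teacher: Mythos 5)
Your proof is correct and is exactly the argument the paper intends: the paper states that the corollary ``follows from Proposition~\ref{a-comp con F} and Proposition~\ref{max}'', and your combination of the factorization $\K_\A=\A^{sur}\circ\K$ with the identification $\K_\A^{max}=\A^{sur}$ (valid under right-accessibility and maximality) is precisely that deduction. Your observation about which hypothesis is used where is also accurate, since Proposition~\ref{a-comp con F} indeed requires no accessibility assumption.
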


When we apply the above results to the ideal of $p$-compact operators, we obtain
\cite[Theorem~11]{PIE2} and \cite[Theorem~12]{PIE2}, see also \cite[Corollary~3.6]{GaLaTur}.

\begin{example} For $1\leq p \leq \infty$, the following isometric identities hold.
\begin{enumerate}[\upshape (a)]
\item $\K_p^{max}=\Pi_p^{d}$.
\item $(\K_p^{d})^{max}=\Pi_p$.
\end{enumerate}
\end{example}

\begin{proof} By \cite[Proposition 8.7.12]{PIE}, $(\A^{d})^{max}=(\A^{max})^{d}$, for any ideal $\A$. Then, statement~(b) follows from (a). To prove (a), note that $\Pi_p^d$ is maximal and surjective, thus we have the result from Example~\ref{Factorizacion K_p} item~(b) and Proposition~\ref{max}.
\end{proof}

\subsection{On the regular hull of $\K_\A$ and dual operator ideals}
The dual ideal of any Banach operator ideal is regular \cite[Ex.22.6]{DF}. Now, we show that the
regular hull of $\K_{\A}$ is a dual operator ideal. Let us recall the definitions. The regular hull
of  $\A$, $\A^{reg}$, is the class of all  $T\in \mathcal L(E;F)$ such that $J_F \circ T \in
\A(E;F'')$  and $\|T\|_{\A^{reg}}=\|J_F \circ T\|_{\A}$,
where $J_F\colon F \rightarrow F''$ is the canonical inclusion.
The operator ideal  $\A$ is regular if $\A=\A^{reg}$, isometrically.

\begin{proposition}\label{reg=dd}
Let $\A$ be a Banach operator ideal. Then,
$$
\hskip 3cm  \K_{\A}^{reg}= \K_{\A}^{dd}, \hskip 3cm  isometrically.
$$
\end{proposition}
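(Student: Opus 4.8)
The plan is to prove the two inclusions $\K_\A^{dd} \subseteq \K_\A^{reg}$ and $\K_\A^{reg} \subseteq \K_\A^{dd}$ separately, keeping track of the norms so that the identity becomes isometric. The whole argument rests on the naturality relation $J_F \circ T = T'' \circ J_E$ relating an operator $T \in \mathcal L(E;F)$ to its bidual, together with the description of relatively $\A$-compact sets furnished by Theorem~\ref{carlste}. Note that no accessibility hypothesis is needed; only the structure of $\A$-compact sets is used.

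For the inclusion $\K_\A^{dd} \subseteq \K_\A^{reg}$, I would take $T \in \K_\A^{dd}(E;F)$, so that $T'' \in \K_\A(E'';F'')$. Since $J_E(B_E) \subseteq B_{E''}$, the identity $J_F T = T'' J_E$ gives $J_F(T(B_E)) = T''(J_E(B_E)) \subseteq T''(B_{E''})$. As $T''(B_{E''})$ is relatively $\A$-compact, so is the subset $J_F(T(B_E))$, whence $J_F T \in \K_\A(E;F'')$, that is, $T \in \K_\A^{reg}(E;F)$. Monotonicity of $\m_\A$ then yields $\|T\|_{\K_\A^{reg}} = \m_\A(J_F(T(B_E));F'') \le \m_\A(T''(B_{E''});F'') = \|T\|_{\K_\A^{dd}}$.

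For the reverse inclusion, I would start from $T \in \K_\A^{reg}(E;F)$, so that $J_F(T(B_E))$ is relatively $\A$-compact in $F''$. The operator $T''$ is weak$^*$-to-weak$^*$ continuous and, by Goldstine's theorem, $J_E(B_E)$ is weak$^*$-dense in $B_{E''}$; combined with $T'' J_E = J_F T$ this gives $T''(B_{E''}) \subseteq \overline{J_F(T(B_E))}^{\,w^*}$. The crucial point is that a relatively $\A$-compact set is relatively norm compact (because $\|\cdot\| \le \|\cdot\|_\A$, equivalently because the $\A$-null sequences of Theorem~\ref{carlste} are norm null), so its closed absolutely convex hull is norm compact, hence weak$^*$-compact and therefore weak$^*$-closed. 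Consequently $\overline{J_F(T(B_E))}^{\,w^*} \subseteq \overline{\co}\{J_F(T(B_E))\}$, and $T''(B_{E''})$ is contained in the closed absolutely convex hull of the relatively $\A$-compact set $J_F(T(B_E))$. By Theorem~\ref{carlste} this hull is again relatively $\A$-compact, so $T'' \in \K_\A(E'';F'')$ and $T \in \K_\A^{dd}(E;F)$.

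For the matching norm bound I would use the invariance $\m_\A(K) = \m_\A(\overline{\co}\{K\})$ together with monotonicity to obtain $\|T\|_{\K_\A^{dd}} = \m_\A(T''(B_{E''});F'') \le \m_\A(\overline{\co}\{J_F(T(B_E))\};F'') = \m_\A(J_F(T(B_E));F'') = \|T\|_{\K_\A^{reg}}$, which with the previous inequality gives the isometric identity. The identity $\K_\A^{reg} = \K_\A^{dd} = (\K_\A^d)^d$ then exhibits $\K_\A^{reg}$ as a dual ideal. I expect the main obstacle to be exactly the control of the weak$^*$-closure: one must ensure it does not leave the $\A$-compact regime, which is where the norm compactness of the closed absolutely convex hull of an $\A$-null sequence is indispensable — mere relative $\A$-compactness of $J_F(T(B_E))$ would not by itself guarantee weak$^*$-closedness. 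Everything else reduces to the naturality identity and the elementary monotonicity and hull-invariance properties of $\m_\A$.
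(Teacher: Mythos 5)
Your proof is correct and takes essentially the same route as the paper's: the inclusion $\K_\A^{dd}\subset\K_\A^{reg}$ comes from the identity $J_F\circ T=T''\circ J_E$, and the reverse inclusion from Goldstine's theorem plus the observation that relative norm compactness of the $\A$-compact set $J_F(T(B_E))$ forces the weak$^*$-closure of that set to stay inside a norm-compact (hence weak$^*$-closed and still relatively $\A$-compact) hull, with the isometry then following from monotonicity and hull-invariance of $\m_\A$. The only cosmetic difference is that the paper traps $T''(B_{E''})$ in the norm closure $\overline{J_F\circ T(B_E)}$, whereas you use the closed absolutely convex hull.
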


\begin{proof} We always have $\K_{\A}^{dd}\subset \K_{\A}^{reg}$,
with $\|\cdot\|_{\K_{\A}^{reg}}\leq \|\cdot\|_{\K_{\A}^{dd}}$.  For the other inclusion, let $E$ and $F$ be Banach spaces and take $T \in \K_{\A}^{reg}(E;F)$. In
particular, $T$ is a compact operator.  Thus,
$$
T''(B_{E''})\subset \overline{J_F\circ  T(B_E)}^{w^*} = \overline{J_F\circ  T(B_E)}.
$$
Then, $T'' \in \K_{\A}(E'';F'')$. Moreover,
$$\|T\|_{\K^{dd}_{\A}}=\|T''\|_{\K_\A}\leq\m_{\A}(\overline{J_F \circ
T(B_E)};F'')=\|J_F\circ T\|_{\K_\A}=\|T\|_{\K^{reg}_{\A}},$$
and the isometric result holds.
\end{proof}

For right-accessible ideals we have the following.
\begin{proposition}\label{A-comp reg}
Let $\A$ be a right-accessible Banach operator ideal, then $\K_\A$ is regular. That is,
$$
\hskip 3cm \K_{\A}=\K_{\A}^{reg}, \hskip 3cm  isometrically.
$$
\end{proposition}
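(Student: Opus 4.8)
The plan is to reduce the statement to a computation of the bidual ideal $\K_{\A}^{dd}$ and then to feed the outcome into the structural identities already established. By Proposition~\ref{reg=dd} we have $\K_{\A}^{reg}=\K_{\A}^{dd}$ isometrically, so it suffices to prove $\K_{\A}=\K_{\A}^{dd}$ isometrically. The inclusion $\K_{\A}\subset\K_{\A}^{dd}$ with $\|\cdot\|_{\K_{\A}^{dd}}\le\|\cdot\|_{\K_{\A}}$ is automatic (any ideal embeds contractively into its bidual), so the real content is the reverse inclusion together with the matching of norms.

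First I would compute $\K_{\A}^{dd}$ explicitly. Starting from the factorization $\K_{\A}=\A^{sur}\circ\K$ of Proposition~\ref{a-comp con F} and using the standard isometric identities for duals of compositions and of injective and surjective hulls, namely $(\mathcal B\circ\mathcal C)^d=\mathcal C^d\circ\mathcal B^d$, $\K^d=\K$, $(\mathcal B^{sur})^d=(\mathcal B^d)^{inj}$ and $(\mathcal B^{inj})^d=(\mathcal B^d)^{sur}$ (see \cite{DF}), I obtain
$$
\K_{\A}^d=\K\circ(\A^d)^{inj}\quad\text{and hence}\quad\K_{\A}^{dd}=(\A^{dd})^{sur}\circ\K=\K_{\A^{dd}},
$$
where the last equality is again Proposition~\ref{a-comp con F}. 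Thus the proposition will follow once I show $\K_{\A^{dd}}=\K_{\A}$ isometrically.

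To compare $\K_{\A^{dd}}$ with $\K_{\A}$ I would invoke Corollary~\ref{a-min sur}, which gives $\K_{\mathcal B}=(\mathcal B^{min})^{sur}$ for any right-accessible $\mathcal B$. This requires checking that $\A^{dd}$ is right-accessible: since $\A$ is right-accessible its dual $\A^d$ is left-accessible, and therefore $\A^{dd}$ is right-accessible, using the accessibility--duality dictionary of \cite[Corollary~21.3]{DF}. Consequently $\K_{\A}=(\A^{min})^{sur}$ and $\K_{\A^{dd}}=((\A^{dd})^{min})^{sur}$, and it remains only to prove that $\A$ and $\A^{dd}$ have the same minimal kernel, $(\A^{dd})^{min}=\A^{min}$ isometrically.

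This last identity is the main obstacle, and it is the only point where finite-dimensional information enters. The key observation is that $\A$ and $\A^{dd}$ induce the very same norm on operators between finite-dimensional spaces: for such an operator $T$ one has $\|T\|_{\A^{dd}}=\|T''\|_{\A}=\|T\|_{\A}$, since $T''=T$ under the canonical identifications $E''=E$ and $F''=F$. Because the minimal kernel is completely determined by the restriction of the ideal norm to finite rank operators (equivalently, $\A$ and $\A^{dd}$ share their maximal hull and $\A^{min}=(\A^{max})^{min}$ for accessible ideals), this forces $(\A^{dd})^{min}=\A^{min}$ isometrically; here I would cite the results on minimal kernels in \cite[Chapter~25]{DF}. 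Combining the three steps yields $\K_{\A}^{reg}=\K_{\A}^{dd}=\K_{\A^{dd}}=(\A^{min})^{sur}=\K_{\A}$, and since every identification in this chain is isometric, the regularity $\K_{\A}=\K_{\A}^{reg}$ holds isometrically.
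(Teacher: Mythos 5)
Your overall plan---reduce, via Proposition~\ref{reg=dd}, to showing $\K_\A=\K_\A^{dd}$ isometrically---is legitimate (the paper runs this logic in the opposite order: it proves regularity directly and then deduces $\K_\A=\K_\A^{dd}$ in Proposition~\ref{K_A dd}), but the execution has a fatal gap. The computation $\K_\A^d=\K\circ(\A^d)^{inj}$, and hence $\K_\A^{dd}=\K_{\A^{dd}}$, rests on two ``standard'' identities that are not valid in general, and in each case the inclusion that does hold goes the wrong way for you. For compositions one only has $\mathcal C^d\circ\mathcal B^d\subset(\mathcal B\circ\mathcal C)^d$ contractively; for the converse, writing $T'=S\circ R$ with $R\in\mathcal C$, $S\in\mathcal B$ and taking adjoints yields only $T''\in\mathcal C^d\circ\mathcal B^d$, i.e.\ membership of $T$ in a regular-hull-type enlargement---precisely the kind of statement being proved, not a free identity. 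Likewise, the valid half of Pietsch's Theorem~8.5.9 is $(\mathcal B^{inj})^d=(\mathcal B^d)^{sur}$, which is the form the paper uses in Propositions~\ref{K_A=Bd} and~\ref{K_Ad}; the mirror identity $(\mathcal B^{sur})^d=(\mathcal B^d)^{inj}$ that you invoke holds in general only as the inclusion $(\mathcal B^d)^{inj}\subset(\mathcal B^{sur})^d$, obtained by lifting $\q_{F'}$ through the quotient $\iota_F'\colon \ell_\infty(B_{F'})'\to F'$ via the metric lifting property of $\ell_1(B_{F'})$; the reverse inclusion would require lifting $\iota_F'$ through $\q_{F'}$, i.e.\ the lifting property of $\ell_\infty(B_{F'})'$, which fails. (Note that the paper itself, in Proposition~\ref{K_Ad}, is careful to use only this one-sided inclusion.) Tracking only the valid inclusions, your chain gives $\K_{\A^{dd}}\subset\K_\A^{dd}$, which is the opposite of the inclusion $\K_\A^{dd}\subset\K_{\A^{dd}}=\K_\A$ your argument needs.

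There is a second, independent gap: the accessibility transfer. The dictionary of \cite[Corollary~21.3]{DF}, as stated in the paper, says that $\mathcal B$ is left-accessible if and only if $\mathcal B^d$ is right-accessible; applied to $\mathcal B=\A^d$ it reads ``$\A^d$ left-accessible $\iff$ $\A^{dd}$ right-accessible''. Your first implication, ``$\A$ right-accessible $\Rightarrow$ $\A^d$ left-accessible'', is not an instance of this dictionary; by that very dictionary it is equivalent to the conclusion you are trying to reach ($\A^{dd}$ right-accessible), so the step is circular as written. Two smaller points: ``any ideal embeds contractively into its bidual'' is false in general ($T\in\A$ need not give $T''\in\A$); for $\K_\A$ the inclusion is correct, but because $\K_\A\subset\K_\A^{reg}=\K_\A^{dd}$ via Proposition~\ref{reg=dd}, not by a general principle. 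Also, the minimal kernel is determined by the ideal norm on operators between finite-dimensional \emph{spaces}, not on finite-rank operators (on the latter $\|\cdot\|_\A$ and $\|\cdot\|_{\A^{dd}}$ may differ); your identity $(\A^{dd})^{min}=\A^{min}$ is nevertheless correct, but it sits downstream of the broken steps. What your proposal is missing, in comparison with the paper's proof, is the analytic input that actually permits the descent from $F''$ to $F$: the paper writes $\K_\A=(\A^{min})^{sur}$ (Corollary~\ref{a-min sur}), composes with $\q_E$ to move the problem to the domain $\ell_1(B_E)$, and then applies \cite[Corollary~22.8.2]{DF}---the minimal kernel is isometrically regular on domains whose dual has the approximation property, such as $\ell_1(B_E)$. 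Nothing in your argument supplies a substitute for this approximation-property input; the invalid duality identities are doing that work illegitimately.
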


\begin{proof}
We always have $\K_{\A}\subset \K_{\A}^{reg}$ and $\|\cdot\|_{\K_{\A}^{reg}}\le
\|\cdot\|_{\K_{\A}}$. Now,
let $E$ and $F$ be Banach spaces and $T \in \K_{\A}^{reg}(E;F)$, then $J_F
\circ T \in \K_\A(E;F'')$.
Since $\A$ is right-accessible, by Corollary~\ref{a-min sur}, $J_F\circ T\in
(\A^{min})^{sur}(E;F'')$. By \cite[Proposition 9.8]{DF}, if $\q_E\colon
\ell_1(B_E)\twoheadrightarrow E$ is the canonical surjection, then $J_F \circ T\circ \q_E \in
\A^{min}(\ell_1(B_E);F'')$. As the dual of $\ell_1(B_E)$ has the approximation property, by
\cite[Corollary~22.8.2]{DF}, $T\circ \q_E \in \A^{min}(\ell_1(B_E);F)$ and $\|T\circ
\q_E\|_{\A^{min}}=\|J_F \circ T\circ
\q_E\|_{\A^{min}}$. Another
application of \cite[Proposition 9.8]{DF} and Corollary~\ref{a-min sur} gives
that $T\in
(\A^{min})^{sur}(E;F)=  \K_\A(E;F)$ and
$\|T\|_{\K_{\A}}=\|T\|_{\K^{reg}_{\A}}$.
\end{proof}

As a consequence of the above, we prove that whenever $\A$ is right-accessible, $\K_\A$ is a dual
operator ideal.

\begin{proposition}\label{K_A dd}
Let $\A$ be a Banach operator ideal. Then, $\K_{\A}\subset \K_{\A}^{dd}$ and
$\|\cdot\|_{\K^{dd}_{\A}}\leq \|\cdot\|_{\K_{\A}}$. Moreover, if $\A$ is
right-accessible, then
$$
\hskip 3cm \K_{\A}= \K_{\A}^{dd}, \hskip 3cm isometrically.
$$
\end{proposition}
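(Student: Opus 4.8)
The plan is to deduce both assertions directly from the two immediately preceding propositions, namely Proposition~\ref{reg=dd} (which identifies $\K_\A^{reg}$ with $\K_\A^{dd}$ for an arbitrary ideal) and Proposition~\ref{A-comp reg} (which collapses the regular hull onto $\K_\A$ when $\A$ is right-accessible). No genuinely new argument should be needed; the content lies entirely in chaining these isometric identities through the general inclusion of an ideal into its regular hull.

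First I would establish the unconditional inclusion $\K_\A \subset \K_\A^{reg}$ together with the norm estimate $\|\cdot\|_{\K_\A^{reg}} \leq \|\cdot\|_{\K_\A}$, valid for any Banach operator ideal. This is a standard feature of the regular hull procedure: for $T \in \K_\A(E;F)$ the canonical embedding $J_F \colon F \to F''$ has norm one, so the ideal property gives $J_F \circ T \in \K_\A(E;F'')$ with $\|J_F \circ T\|_{\K_\A} \leq \|T\|_{\K_\A}$; by definition of the regular hull this means $T \in \K_\A^{reg}(E;F)$ and $\|T\|_{\K_\A^{reg}} = \|J_F \circ T\|_{\K_\A} \leq \|T\|_{\K_\A}$. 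This inclusion is in fact already invoked at the start of the proof of Proposition~\ref{A-comp reg}. Combining it with the isometric identity $\K_\A^{reg} = \K_\A^{dd}$ of Proposition~\ref{reg=dd} yields $\K_\A \subset \K_\A^{dd}$ and $\|\cdot\|_{\K^{dd}_\A} = \|\cdot\|_{\K_\A^{reg}} \leq \|\cdot\|_{\K_\A}$, which is the first assertion.

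For the isometric equality under right-accessibility, I would simply concatenate the two cited propositions. If $\A$ is right-accessible, Proposition~\ref{A-comp reg} gives $\K_\A = \K_\A^{reg}$ isometrically, and Proposition~\ref{reg=dd} gives $\K_\A^{reg} = \K_\A^{dd}$ isometrically; hence $\K_\A = \K_\A^{dd}$ isometrically.

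There is no real obstacle here, since the substantive work has been absorbed into Propositions~\ref{reg=dd} and~\ref{A-comp reg}. The only point demanding a little care is the elementary general inclusion $\K_\A \subset \K_\A^{reg}$, whose norm inequality must be recorded in the correct direction (the regular-hull norm never exceeds the original norm); everything else is bookkeeping with isometric identifications.
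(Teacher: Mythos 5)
Your proposal is correct and follows exactly the paper's own proof: the general inclusion $\K_\A \subset \K_\A^{reg}$ with $\|\cdot\|_{\K_\A^{reg}} \leq \|\cdot\|_{\K_\A}$ combined with Proposition~\ref{reg=dd} gives the first assertion, and Proposition~\ref{A-comp reg} settles the right-accessible case. The only difference is that you spell out the elementary regular-hull inclusion (via $\|J_F\|=1$ and the ideal property), which the paper takes for granted.
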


\begin{proof} Since $\K_{\A}\subset \K_{\A}^{reg}$ and $\|\cdot\|_{\K_{\A}^{reg}}\le
\|\cdot\|_{\K_{\A}}$, the first statement holds by Proposition~\ref{reg=dd}.  For a
right-accessible ideal $\A$, an application of Proposition~\ref{A-comp reg} completes the proof.
\end{proof}

For operator ideals $\A$ such that $\K_\A$ is regular we can show that a set $K$ is  $\A$-compact
regardless it is considered as a subset of a Banach space $E$ or as a subset of its bidual $E''$,
with equal size. For $p$-compact sets, this was shown in \cite[Theorem 2.4]{GaLaTur}, see also
\cite[Corollary~3.6]{DPS_adj}.

\begin{corollary}\label{m_A no cambia con E''}
Let $E$  be a Banach space,
$K\subset E$ a subset and  $\A$ be a Banach operator ideal such that $\K_\A$ is regular.  Then, $K$
is relatively $\A$-compact if and only if $K\subset E''$ is relatively
$\A$-compact and $\m_{\A}(K;E)=\m_{\A}(K;E'')$.

In particular, the result applies to any right-accessible Banach operator ideal $\A$.
\end{corollary}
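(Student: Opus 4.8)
The plan is to derive Corollary~\ref{m_A no cambia con E''} directly from the
regularity statement Proposition~\ref{K_A dd} (equivalently
Proposition~\ref{A-comp reg}), by running the argument that already appears in
Corollary~\ref{m_A no cambia}, but now with the canonical inclusion
$J_E\colon E\hookrightarrow E''$ in place of a general metric injection and with
regularity replacing injectivity. First I would dispose of the trivial
implication: since $E\subset E''$ isometrically, every relatively
$\A$-compact set $K\subset E$ is relatively $\A$-compact as a subset of $E''$,
and by the monotonicity noted after the definition of $\m_\A$ we have
$\m_\A(K;E'')\le\m_\A(K;E)$.

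For the substantive implication I would assume $K\subset E''$ is relatively
$\A$-compact and recover $\A$-compactness inside $E$ together with the reverse
inequality on sizes. As in the proof of Corollary~\ref{m_A no cambia}, I may
reduce to the case where $K$ is convex, balanced and closed, using the facts
recorded just after the definition of $\m_\A$ (namely
$\m_\A(K;E)=\m_\A(\overline{\co}\{K\};E)$). Then $K$ is compact, so by
\cite[Lemma~4.11]{RYAN} there are a Banach space $G$ and an operator $T\in
\mathcal L(G;E)$ with $T(B_G)=K$. The composition $J_E\circ T$ sends $B_G$ onto
$J_E(K)=K\subset E''$, which is relatively $\A$-compact by hypothesis, so
$J_E\circ T\in\K_\A(G;E'')$; by definition this says precisely that $T\in
\K_\A^{reg}(G;E)$. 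Invoking regularity of $\K_\A$, which holds by
Proposition~\ref{K_A dd} (and in particular for right-accessible $\A$ by
Proposition~\ref{A-comp reg}), we get $T\in\K_\A(G;E)$, i.e. $K=T(B_G)$ is
relatively $\A$-compact in $E$, and moreover $\|T\|_{\K_\A}=\|T\|_{\K_\A^{reg}}$.
Translating these equalities through $\|T\|_{\K_\A}=\m_\A(T(B_G);E)=\m_\A(K;E)$
and $\|T\|_{\K_\A^{reg}}=\m_\A(J_E\circ T(B_G);E'')=\m_\A(K;E'')$ yields
$\m_\A(K;E)=\m_\A(K;E'')$, completing the equality of sizes.

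The last sentence of the statement is then immediate: for right-accessible $\A$,
Proposition~\ref{A-comp reg} guarantees that $\K_\A$ is regular, so the corollary
applies. I expect the only genuine subtlety to be the reduction to convex
balanced closed $K$ and the correct bookkeeping of the norms: one must check that
the two identifications $\|T\|_{\K_\A}=\m_\A(K;E)$ and
$\|T\|_{\K_\A^{reg}}=\m_\A(K;E'')$ are exact (not merely up to $\ep$), which is
where the isometric nature of \cite[Lemma~4.11]{RYAN} (giving $T(B_G)=K$ rather
than $T(B_G)\supset K$) is essential, since it lets the $\m_\A$-infimum be
attained by this particular $T$ and thereby transfers the isometric regularity
equality into an equality of set-sizes.
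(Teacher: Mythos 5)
Your proof is correct and follows essentially the same route as the paper: the paper's own proof simply says to repeat the argument of Corollary~\ref{m_A no cambia}, replacing the metric injection $\iota$ by $J_E$ and injectivity of $\K_\A$ by regularity, which is exactly what you do (reduction to a convex, balanced, closed $K$, the factorization $T(B_G)=K$ via \cite[Lemma~4.11]{RYAN}, and the isometric identity $\K_\A=\K_\A^{reg}$ to transfer both $\A$-compactness and the $\m_\A$-size back from $E''$ to $E$).
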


\begin{proof} The result is obtained with a similar proof to that given in Corollary~\ref{m_A no
cambia}.
\end{proof}
\medskip

\subsection{On the ideal of $\A^d$-compact operators}
When we consider a left-accessible ideal $\A$ and inspect the ideal of $\A^d$-compact operator, we
can push a little bit further. We finish this section with two results that we apply to recover
some relations satisfied by the ideals of $p$-compact  and quasi $p$-nuclear
operators. We need a preliminary lemma.

\begin{lemma}\label{Amin_sur_dual}
Let $\A$ be a Banach operator ideal. Then,
$(\A^{d \ min})^{sur}= (\A^{min \ d})^{sur}$,  isometrically.
\end{lemma}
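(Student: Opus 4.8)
The plan is to follow the same strategy as in the proof of Lemma~\ref{Aminsur}, transferring the claimed identity from the level of surjective hulls down to a pointwise statement on the canonical surjections $\q_E\colon \ell_1(B_E)\twoheadrightarrow E$, exploiting that here both $\ell_1(B_E)$ and its dual enjoy the approximation property. Throughout, the chain of identifications will be isometric, so the norm equality in the conclusion comes for free once the set-theoretic coincidence is established.

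First I would unwind the definition of the surjective hull. Fixing Banach spaces $E$ and $F$, an operator $T$ belongs to $(\A^{d \ min})^{sur}(E;F)$ if and only if $T\circ \q_E \in \A^{d \ min}(\ell_1(B_E);F)$, and then $\|T\|_{(\A^{d \ min})^{sur}}=\|T\circ \q_E\|_{\A^{d \ min}}$. Thus the problem reduces to the pointwise isometric identity $\A^{d \ min}(\ell_1(B_E);F)=\A^{min \ d}(\ell_1(B_E);F)$: once this is in hand, the same equivalence read in reverse shows $T\circ \q_E \in \A^{min \ d}(\ell_1(B_E);F)$, that is $T\in (\A^{min \ d})^{sur}(E;F)$, with matching norms, and the lemma follows.

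The heart of the matter, and the step I expect to be the main obstacle, is justifying that the dual procedure commutes with the minimal kernel on $\mathcal L(\ell_1(B_E);F)$. This is the dual counterpart of the injective statement used in Lemma~\ref{Aminsur}: there one invoked that $(\ell_1(B_E))'$ has the approximation property together with \cite[Proposition~25.11.2]{DF} to obtain $\A^{inj \ min}=\A^{min \ inj}$ on these spaces, and here I would appeal to the analogous part of \cite[Proposition~25.11]{DF} for the dual procedure, which again requires precisely that $(\ell_1(B_E))'=\ell_\infty(B_E)$ have the approximation property. The subtlety that makes the hypothesis indispensable is that, for a general ideal, $(\A^{min})^d$ and $(\A^d)^{min}$ relate to each other only through the bidual: unwinding $T'\in \A^{min}=\overF\circ \A \circ \overF$ yields information about $T''$ rather than about $T$ itself, and it is the approximation property of $(\ell_1(B_E))'$ that bridges this gap and upgrades the resulting inclusion into an isometric identity. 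Since $\ell_1(B_E)$ and its dual carry the metric approximation property, this applies on the relevant spaces and completes the argument.
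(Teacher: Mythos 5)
Your proposal is correct and takes essentially the same route as the paper: unwind the surjective hull via $\q_E\colon \ell_1(B_E)\twoheadrightarrow E$, reduce the lemma to the pointwise isometric identity $\A^{d \ min}(\ell_1(B_E);F)=\A^{min \ d}(\ell_1(B_E);F)$, and obtain that identity from the fact that $(\ell_1(B_E))'=\ell_\infty(B_E)$ has the approximation property. The only correction needed is the reference for that key commutation: it is not an ``analogous part'' of \cite[Proposition~25.11]{DF} (which concerns the injective and surjective hulls), but rather \cite[Corollary~22.8.1]{DF}, which is exactly what the paper substitutes for \cite[Proposition~25.11.2]{DF} when adapting the proof of Lemma~\ref{Aminsur}.
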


\begin{proof}
The same
proof given in Lemma~\ref{Aminsur} works here using \cite[Corollary~22.8.1]{DF} instead of
\cite[Proposition~25.11.2]{DF}.
\end{proof}

\begin{proposition}\label{K_A=Bd}
Let $\A$ be a left-accessible Banach operator ideal. Then
$$
\K_{\A^d}= (\A^{min \ inj})^d, \hskip 3cm isometrically.
$$
\end{proposition}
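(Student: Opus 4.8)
The goal is to prove $\K_{\A^d} = (\A^{min\ inj})^d$ isometrically, assuming $\A$ is left-accessible. The natural strategy is to reduce everything to already-established identities from the preceding subsections, rewriting both sides until they meet. The plan is to start from the right-hand side and use the dualization formula for surjective/injective hulls together with Corollary~\ref{a-min sur}.

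First I would record the hypothesis translation: since $\A$ is left-accessible, its dual $\A^d$ is right-accessible (this is the defining relation stated in the excerpt), so Corollary~\ref{a-min sur} applies to $\A^d$ and gives $\K_{\A^d} = ((\A^d)^{min})^{sur}$ isometrically. This rewrites the left-hand side entirely in terms of the minimal kernel and surjective hull of $\A^d$. The key computation is then to identify $((\A^d)^{min})^{sur}$ with $(\A^{min\ inj})^d$. Here I would invoke the standard duality relations between the procedures: the dual of an injective hull is the surjective hull of the dual, i.e.\ $(\mathcal B^{inj})^d = (\mathcal B^d)^{sur}$, and the dual commutes with the minimal kernel, $(\mathcal B^{min})^d = (\mathcal B^d)^{min}$ (both found in Pietsch or \cite{DF}). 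Applying these to $\mathcal B = \A^{min}$ would turn $(\A^{min\ inj})^d$ into $((\A^{min})^d)^{sur} = ((\A^d)^{min})^{sur}$, which matches the rewritten left-hand side.

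The step I expect to require the most care is checking that these duality identities hold \emph{isometrically} and in the exact form $(\mathcal B^{inj})^d = (\mathcal B^d)^{sur}$ and $(\mathcal B^{min})^d=(\mathcal B^d)^{min}$, and that the left-accessibility of $\A$ is genuinely what licenses passing from $\A^d$ right-accessible into Corollary~\ref{a-min sur}. In particular, one must be sure the order of the $min$, $inj$, $sur$ and $d$ operations can be rearranged without accidentally swapping a left- for a right-accessibility requirement; this is precisely the kind of bookkeeping that Lemma~\ref{Aminsur} and Lemma~\ref{Amin_sur_dual} were set up to handle, so I anticipate Lemma~\ref{Amin_sur_dual} being the hinge that legitimizes the commutation of $d$, $min$ and $sur$. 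If the dualization identities need the approximation property of some relevant dual space (as in the proofs of those two lemmas), I would supply it via the canonical surjection $\q_E\colon \ell_1(B_E)\twoheadrightarrow E$, whose domain has a dual with the approximation property.

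Assembling the chain, the proof would read as the single isometric string
$$
\K_{\A^d} = ((\A^d)^{min})^{sur} = ((\A^{min})^d)^{sur} = (\A^{min\ inj})^d,
$$
where the first equality is Corollary~\ref{a-min sur} applied to the right-accessible ideal $\A^d$, the second is the commutation of dual with minimal kernel, and the third is the dual-of-injective-hull identity together with Lemma~\ref{Amin_sur_dual}. Since every link is an isometric identification, the conclusion follows, and the only real work is verifying that the hypotheses of each cited result are met at the point of application.
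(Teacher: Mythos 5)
Your proposal is correct and takes essentially the same route as the paper, whose entire proof is the chain $\K_{\A^d}=(\A^{d \ min})^{sur}=(\A^{min \ d})^{sur}=(\A^{min \ inj})^{d}$, with the first equality from Corollary~\ref{a-min sur} applied to the right-accessible ideal $\A^d$, the second from Lemma~\ref{Amin_sur_dual}, and the third from \cite[Theorem~8.5.9]{PIE}. One small correction to your bookkeeping: the middle equality is justified by Lemma~\ref{Amin_sur_dual} alone, not by an unqualified commutation $(\mathcal B^{min})^d=(\mathcal B^d)^{min}$ (which fails in general --- without approximation-type hypotheses one only gets the inclusion $(\mathcal B^d)^{min}\subset(\mathcal B^{min})^d$, and equality is restored precisely by passing to surjective hulls), so your instinct to treat that lemma as ``the hinge'' is exactly the paper's reading.
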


\begin{proof}
Since $\A^d$ is right-accessible, we apply Corollary~\ref{a-min
sur}, the above lemma and \cite[Theorem 8.5.9]{PIE} to obtain the
isometric identities
$$
\K_{\A^d}=(\A^{d \ min})^{sur}=(\A^{ min \ d})^{sur}=(\A^{ min \ inj})^{d}. \qedhere
$$
\end{proof}

\begin{proposition}\label{K_Ad}
Let $\A$ be a left-accessible Banach operator ideal. Then
$$
\K_{\A^d}^d= \A^{min \, inj}, \hskip 3cm isometrically.
$$
\end{proposition}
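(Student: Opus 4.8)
The plan is to deduce the statement from the preceding proposition by dualizing. By Proposition~\ref{K_A=Bd} we already have the isometric identity $\K_{\A^d} = (\A^{min \, inj})^d$ for left-accessible $\A$. Taking adjoint ideals on both sides, and using $(\mathcal B^d)^d = \mathcal B^{dd}$, this gives
$$
\K_{\A^d}^d = \big((\A^{min \, inj})^d\big)^d = (\A^{min \, inj})^{dd}
$$
isometrically. Hence the whole statement reduces to the identity $(\A^{min \, inj})^{dd} = \A^{min \, inj}$.

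To establish this, I would first record a general fact: for every Banach operator ideal $\mathcal B$ one has $\mathcal B^{dd} \subseteq \mathcal B^{reg}$ isometrically. Indeed, using the naturality relation $J_F \circ T = T'' \circ J_E$, an operator $T \in \mathcal B^{dd}(E;F)$ (that is, $T'' \in \mathcal B(E'';F'')$) satisfies $J_F \circ T = T'' \circ J_E \in \mathcal B(E;F'')$, so $T \in \mathcal B^{reg}$ with $\|T\|_{\mathcal B^{reg}} = \|T'' \circ J_E\|_{\mathcal B} \le \|T''\|_{\mathcal B} = \|T\|_{\mathcal B^{dd}}$. Combined with the always-valid metric injection $\mathcal B \hookrightarrow \mathcal B^{dd}$, this yields that any regular ideal $\mathcal B$ satisfies $\mathcal B = \mathcal B^{dd}$ isometrically, since then $\mathcal B = \mathcal B^{reg} \supseteq \mathcal B^{dd} \supseteq \mathcal B$.

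It remains to check that $\A^{min \, inj}$ is regular. Here I would use that $\A^{min \, inj} = (\A^{min})^{inj}$ is an injective ideal (the injective hull of any ideal is injective) together with the fact that every injective ideal is regular. The latter follows by extending the canonical injection $\iota_F \colon F \hookrightarrow \ell_\infty(B_{F'})$ through the isometric embedding $J_F \colon F \to F''$---which is possible because $\ell_\infty(B_{F'})$ is $1$-injective---to obtain $S \colon F'' \to \ell_\infty(B_{F'})$ with $\|S\| \le 1$ and $\iota_F = S \circ J_F$; then any $T \in (\A^{min \, inj})^{reg}$ gives $\iota_F \circ T = S \circ J_F \circ T \in \A^{min \, inj}$, whence $T \in \A^{min \, inj}$ by injectivity, with the matching norm estimate. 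Putting everything together yields $\K_{\A^d}^d = (\A^{min \, inj})^{dd} = \A^{min \, inj}$.

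The main obstacle is really just the regularity of $\A^{min \, inj}$; the dualization itself is purely formal. If the implication ``injective $\Rightarrow$ regular'' is taken as a citation from \cite{DF}, the argument collapses to dualizing Proposition~\ref{K_A=Bd} and invoking $\mathcal B = \mathcal B^{dd}$ for regular $\mathcal B$.
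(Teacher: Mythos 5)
Your opening move is fine: dualizing Proposition~\ref{K_A=Bd} gives $\K_{\A^d}^d=(\A^{min\, inj})^{dd}$ isometrically, and your proof of the inclusion $(\A^{min \, inj})^{dd}\subset (\A^{min\, inj})^{reg}=\A^{min\, inj}$ (via $\mathcal B^{dd}\subset\mathcal B^{reg}$ and ``injective $\Rightarrow$ regular'') is correct and is essentially the paper's argument for that half. The genuine gap is the reverse inclusion. You invoke an ``always-valid metric injection $\mathcal B\hookrightarrow \mathcal B^{dd}$'', and this is simply false for general Banach operator ideals: $T\in\mathcal B$ does not imply $T''\in\mathcal B$. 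For a concrete counterexample, let $\mathcal B$ be the closed ideal of operators with separable range (or of completely continuous operators) with the supremum norm; the identity of $\ell_1$ belongs to it, while its bidual, the identity of $\ell_1''=(\ell_\infty)'$, does not. Note also that regularity of $\mathcal B$ only converts $\mathcal B^{dd}\subset\mathcal B^{reg}$ into $\mathcal B^{dd}\subset \mathcal B$; it never yields $\mathcal B\subset\mathcal B^{dd}$ (the separable-range ideal is regular and still fails this). Indeed, the paper itself cannot quote such a principle: to get $\K_\A\subset\K_\A^{dd}$ it has to prove Propositions~\ref{reg=dd} and~\ref{K_A dd}, using that $\A$-compact operators are compact.

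So the inclusion $\A^{min\, inj}\subset(\A^{min\, inj})^{dd}=\K_{\A^d}^d$ is precisely the hard direction, and your argument gives nothing for it. The paper handles it with a property special to minimal kernels: $\A^{min}=(\A^{dd})^{min}\subset(\A^{min})^{dd}$ (the equality holds because $\A$ and $\A^{dd}$ coincide on finite-dimensional spaces and ideals with the same finite-dimensional components have the same minimal kernel; the inclusion holds because biduals of approximable operators are approximable), and then passes to injective hulls, using Pietsch's duality relation $(\mathcal B^{d})^{inj}\subset(\mathcal B^{sur})^{d}$ from \cite[Theorem~8.5.9]{PIE} together with Lemma~\ref{Amin_sur_dual} and Corollary~\ref{a-min sur} to identify the result with $\K_{\A^d}^d$. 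Your scheme could be repaired along these lines: replace the false general claim by the true fact $\A^{min}\subset(\A^{min})^{dd}$, and then check that an inclusion $\mathcal B\subset\mathcal B^{dd}$ is inherited by $\mathcal B^{inj}$ (extend $\iota_{F''}$ through the metric injection $\iota_F''$ using the $1$-injectivity of $\ell_\infty(B_{F'''})$). As written, however, the proof fails at its central point.
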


\begin{proof} One inclusion is obtained by Proposition~\ref{K_A=Bd} and the fact that injective
ideals are always regular, then
$$
\K_{\A^d}^d = (\A^{min \ inj})^{dd} \subset (\A^{min \ inj})^{reg} =\A^{min \ inj}.
$$
For the reverse inclusion, notice that
$\A^{min}=(\A^{dd})^{min}\subset (\A^{min})^{dd}$, for any $\A$.
Now, considering the injective hulls and applying \cite[Theorem
8.5.9]{PIE}, we obtain
$$
 \A^{min  \ inj}\subset (\A^{min \ dd})^{inj}\subset ((\A^{min \ d})^{sur})^d.
$$
By Lemma~\ref{Amin_sur_dual} and Corollary~\ref{a-min sur},
$$
((\A^{min \ d})^{sur})^d=((\A^{d \ min})^{sur})^d =\K_{\A^{d}}^d.
$$
All the inclusions considered are given by contractive maps, which
completes the proof.
\end{proof}

We illustrate the above propositions with the following examples. In particular, the third
statement recovers a result of \cite[Corollary 3.2]{SiKa2008}. The other two identifications appear
in \cite{DPS_adj} and \cite{GaLaTur}.

\begin{example} Let $1\leq p< \infty$. The following isometric identities hold.
\begin{enumerate}[\upshape (a)]
\item $\K_p =\mathcal{QN}_p^d$.
\item $\K_p^d =\mathcal{QN}_p$.
\item $\K_p^d =(\Pi_p^{min})^{inj}$.
\end{enumerate}
\end{example}

\begin{proof}
By Example~\ref{Factorizacion K_p}  (a), write $\K_p=\K_{\mathcal N_p^{d}}$. Note that
$\mathcal N_p$ is minimal, hence it is left-accessible. Now we use thta $\mathcal N_p=\mathcal{QN}_p^{inj}$ with  Proposition~\ref{K_A=Bd} to obtain (a) and with Proposition~\ref{K_Ad} to obtain (b). For the proof of (c), by Example~\ref{Factorizacion
K_p} (b),  write $\K_p=\K_{\Pi_p^{d}}$. Now, use that $\Pi_p$ is left-accessible and
Proposition~\ref{K_Ad} .
\end{proof}

\section{Approximation properties given by operator ideals}

In this section we study two different types of approximation
properties defined through $\A$-compact sets. To this end we consider  two
different topologies on $\mathcal L(E;F)$.

\begin{definition} Let $\A$ be an operator ideal. On $\mathcal L(E;F)$, we consider
the topology of uniform convergence on $\mathcal A$-compact sets,
$\tau_{\A}$, which is given by the seminorms
$$
q_{K}(T)=\sup_{x \in K} \|T(x)\|,
$$
where $K$ ranges over all $\A$-compact sets. When $F=\mathbb C$, we
simply write $E'_{\A}=(\mathcal L(E;\mathbb C);\tau_{\A})$.
\end{definition}

Note that if $\A=\K$ we obtain $E'_c$, the dual space of $E$ endowed with the
topology of uniform convergence on compact sets.

The other topology we consider is induced by the size of the $\A$-compact
sets $\m_\A$, defined in Section~\ref{sets}.

\begin{definition} Let $\A$ be an operator ideal. On $\mathcal L(E;F)$, we define
the topology of strong uniform convergence on $\mathcal A$-compact
sets, $\tau_{s\A}$, which is given by the seminorms
$$
q_{K}(T)=\m_{\A}(T(K);F),
$$
where $K$ ranges over all $\A$-compact sets.
\end{definition}

The following statements have straightforward proofs.

\begin{remark}\label{A and B}
Let $\A$ be a Banach operator ideal and let $E$ and $F$ be Banach spaces.
\begin{enumerate}[\upshape (a)]
\item The topologies $\tau_{s\K}$ and $\tau_{\K}$ coincide
on $\mathcal L(E;F)$.
\item The topologies $\tau_{s\A}$ and $\tau_{\A}$ coincide on $\mathcal
L(E;\mathbb C)$.
\item $Id\colon (\mathcal L(E;F),\tau_{s\A}) \rightarrow (\mathcal
L(E;F),\tau_{\A})$ is
continuous.
\item  $Id\colon (\mathcal
L(E;F),\tau_{\mathcal B}) \rightarrow
(\mathcal L(E;F),\tau_{\A})$ is continuous, for any Banach operator ideal  $\mathcal B$ such that
$\A \subset \mathcal B$.
\end{enumerate}
\end{remark}

Based on the Grothendieck's classic approximation property we have the following definitions.

\begin{definition} Let $E$ be a Banach space and $\A$ a Banach operator ideal.

We say that $E$ has the $\A$-approximation property %($\A$-approximation property for short)
if $\mathcal F(E;E)$ is $\tau_{\A}$-dense in $\mathcal L(E;E)$.

Also, $E$ has the (strong) $s\A$-approximation property
%($s\A$-approximation property for short)
if $\mathcal F(E;E)$ is $\tau_{s\A}$-dense in $\mathcal L(E;E)$.
\end{definition}

It is clear that the $\mathcal K$, the $s\mathcal K$ and the classic approximation properties
coincide for any Banach space. Also, the classic approximation property implies the
$\A$-approximation property, for any $\A$. However, it may not imply the
$s\A$-approximation property, see Example~\ref{ejemplosPA} (a) below or the comments below
Proposition~3.10 in \cite{GaLaTur}. From Remark~\ref{A and B} (c),
we see that the $s\A$-approximation property is stronger than the
$\A$-approximation property, although the converse might be false as
Example~\ref{ejemplosPA} (a) below shows. Furthermore, if $\A$ and
$\mathcal B$ are two Banach operator ideals and $\A \subset \mathcal
B$, from Remark~\ref{A and B} (d),  the $\mathcal B$-approximation
property implies the $\A$-approximation property. Nonetheless, a
Banach space may have the $s\mathcal B$-approximation property and
fail to have the $s\A$-approximation property, see
Example~\ref{ejemplosPA} (b).

Notice that $\mathcal N^p$-approximation property is the $p$-approximation property
introduced by Sinha and Karn in~\cite{SiKa} and then studied
in~\cite{AMR, CK, DOPS, LaTur}. On the other hand, the $s\mathcal
N^p$-approximation property coincides with the
$\kappa_p$-approximation property defined by Delgado, Pi\~neiro and
Serrano (see~\cite[Remark~2.2]{DPS_dens}) and studied later in
\cite{GaLaTur, LaTur, OJA}. In many cases, the $s\mathcal N^p$ and
the $\mathcal N^p$-approximation properties coincide. This happens,
for instance, on any closed subspace of $L_p(\mu)$. Moreover, in any
closed subspace of $L_p(\mu)$, the $s\mathcal N^p$-approximation
property coincides with the $\K$-approximation
property~\cite[Theorem~1]{OJA}. However, these  properties may
differ as it is summarized below.

\begin{example}\label{ejemplosPA} Let $1< p < 2$.  There exists a Banach space
\begin{enumerate}[\upshape (a)]
\item  with the $s\mathcal N^p$-approximation property which fails to
have the approximation property.
\item   with the $\mathcal N^p$-approximation property which fails to
have the $s\mathcal N^p$-approximation property.
\item  with the $s\mathcal N^2$-approximation property which fails to
have the $s\mathcal N^p$-approximation property.
\end{enumerate}
\end{example}

\begin{proof} Fix $1< q<2$ and let $X$ be a subspace of $\ell_q$,  without the approximation
property. Note that $X$ is reflexive and has cotype 2. Now, combining the comment below
\cite[Proposition~21.7]{DF} and \cite[Corollary~2.5]{DPS_dens} we see that $X'$ the
$\kappa_p$-approximation property for any $1< p < 2$.  Then,  $E=X'$ is an example satisfying (a).

If $1\leq p <2$, every Banach space has the $\mathcal N^p$-approximation
property~\cite[Theorem~6.4]{SiKa}. But given $1<p<2$ there exists a Banach space which fails to have
the $s\mathcal N^p$-approximation property~\cite[Theorem~2.4]{DPS_dens}. Which proves (b).

Finally, every Banach space has the $s\mathcal N^2$-approximation
property~\cite[Corollary~3.6]{DPS_dens}, but given $1<p<2$ there exists a Banach space which fails
to have the $s\mathcal N^p$-approximation property, which proves (c).
\end{proof}

\subsection{On finite rank and compact operators and approximation properties}
Now, we inspect the $\A$ and the $s\A$-approximation properties in
relation with the ideal of finite rank and compact operators. The
first proposition refines a classical result on approximation
properties and finite rank operators. Its proof is standard and  we
omit it.

\begin{proposition}\label{PAidentity} Let $E$ be a Banach space. The following
are equivalent.
\begin{enumerate}[\upshape (i)]
\item $E$ has the $\A$-approximation property ($s\A$-approximation property) .
\item $Id \in \overline{\mathcal F(E;E)}^{\tau_{\A}}$ ($Id \in
\overline{\mathcal F(E;E)}^{\tau_{s\A}}$).
\item For every Banach space $F$, $\mathcal F(E;F)$ is
$\tau_{\A}$-dense ($\tau_{s\A}$-dense) in
$\mathcal L(E;F)$.
\item For every Banach space $F$, $\mathcal F(F;E)$ is $\tau_{\A}$-dense
($\tau_{s\A}$-dense) in
$\mathcal L(F;E)$.
\end{enumerate}
\end{proposition}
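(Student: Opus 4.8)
The plan is to prove the chain of implications (iii) $\Rightarrow$ (i), (iv) $\Rightarrow$ (i), (i) $\Leftrightarrow$ (ii), (i) $\Rightarrow$ (iii) and (i) $\Rightarrow$ (iv), treating the $\A$- and the $s\A$-versions in parallel since they rely on exactly the same manipulations of the seminorms $q_K$. Two elementary facts do all the work. First, $\mathcal F$ is an operator ideal, so any composition of a finite rank operator with a bounded operator is again of finite rank. Second, if $K$ is relatively $\A$-compact in a space $Z$ and $U\in \mathcal L(Z;W)$, then $U(K)$ is relatively $\A$-compact in $W$ and $\m_{\A}(U(K);W)\le \|U\|\,\m_{\A}(K;Z)$; this follows at once from the definitions, because $K\subset V(M)$ with $V\in \A$ forces $U(K)\subset (UV)(M)$ with $\|UV\|_{\A}\le \|U\|\,\|V\|_{\A}$ by the ideal inequality.

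The implications (iii) $\Rightarrow$ (i) and (iv) $\Rightarrow$ (i) are immediate on taking $F=E$. For (i) $\Leftrightarrow$ (ii), one direction is trivial since $Id\in\mathcal L(E;E)$; for the converse, given $T\in \mathcal L(E;E)$, a relatively $\A$-compact set $K$ and $\ep>0$, I would choose $S\in\mathcal F(E;E)$ approximating $Id$ well enough on $K$ and then set $R=TS\in\mathcal F(E;E)$. The key identity is $(T-TS)(K)=T\bigl((Id-S)(K)\bigr)$, so in the $\A$-case $\sup_{x\in K}\|(T-TS)(x)\|\le \|T\|\sup_{x\in K}\|(Id-S)(x)\|$, while in the $s\A$-case $\m_{\A}((T-TS)(K);E)\le \|T\|\,\m_{\A}((Id-S)(K);E)$ by the second fact above; picking $S$ so that the relevant quantity for $Id-S$ falls below $\ep/\|T\|$ finishes the argument.

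The implication (i) $\Rightarrow$ (iii) is the same computation verbatim, now with $T\in\mathcal L(E;F)$ and $R=TS\in\mathcal F(E;F)$, using that the approximation of $Id$ on the $\A$-compact set $K\subset E$ furnished by (ii) is available regardless of the target space $F$. The only genuinely different step is (i) $\Rightarrow$ (iv): here $T\in\mathcal L(F;E)$ and $K\subset F$ is relatively $\A$-compact, so I first invoke the second fact to observe that $T(K)$ is a relatively $\A$-compact subset of $E$. Applying (ii) to $T(K)$ produces $S\in\mathcal F(E;E)$ approximating $Id$ on $T(K)$, and then $R=ST\in\mathcal F(F;E)$ works because $(T-ST)(K)=(Id-S)(T(K))$, whence $q_K(T-ST)=q_{T(K)}(Id-S)$ in both topologies. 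I expect this last direction to be the only subtle point, as it is precisely where the stability of $\A$-compact sets (and of the size $\m_{\A}$) under bounded maps is indispensable; everything else is routine.
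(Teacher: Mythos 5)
Your proof is correct: the two composition tricks $R=TS$ (for (ii) $\Rightarrow$ (i) and (i) $\Rightarrow$ (iii)) and $R=ST$ (for (i) $\Rightarrow$ (iv)), together with the facts that $\mathcal F$ is an ideal and that a bounded operator $U$ maps $\A$-compact sets to $\A$-compact sets with $\m_{\A}(U(K);W)\le \|U\|\,\m_{\A}(K;Z)$, are exactly what is needed, and both the $\A$- and $s\A$-versions go through as you describe. The paper omits this proof as standard, and your argument is precisely that standard one.
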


A Banach space $E$ has the approximation property if and only if the space of finite rank operators
from any Banach space $F$ into $E$ is norm dense in the ideal of compact operators. The analogous
result remains valid for the $\A$-approximation property and the $s\A$-approximation property. Here,
the ideal of $\A$-compact operators replaces $\K$.

\begin{proposition}\label{PAyoper1}
Let $E$ be a Banach space and $\A$ a Banach operator ideal. The following are equivalent.
\begin{enumerate}[\upshape (i)]
\item $E$ has the $\A$-approximation property.
\item For all Banach spaces $F$, $\mathcal F \circ \K_{\A}(F;E)$ is
$\|.\|$-dense in $\K_{\A}(F;E)$.
\item For all Banach spaces $F$, $\mathcal F(F;E)$ is $\|.\|$-dense in
$\K_{\A}(F;E)$.
\end{enumerate}
\end{proposition}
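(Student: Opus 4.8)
The plan is to run the cycle (i) $\Rightarrow$ (iii) $\Rightarrow$ (ii) $\Rightarrow$ (i), with the first two implications routine and the last carrying all the weight. For (i) $\Rightarrow$ (iii), given $T\in\K_\A(F;E)$ I would set $K:=\overline{T(B_F)}$, which is relatively $\A$-compact, and use Proposition~\ref{PAidentity} to produce, for each $\ep>0$, an operator $R_0\in\mathcal F(E;E)$ with $\sup_{y\in K}\|y-R_0y\|<\ep$. Then $R_0\circ T\in\mathcal F(F;E)$ and $\|T-R_0T\|=\sup_{y\in T(B_F)}\|y-R_0y\|\le\ep$; since $R_0\circ T$ also factors as $(\text{finite rank})\circ(\A\text{-compact})$, the same computation gives (i) $\Rightarrow$ (ii). The implication (iii) $\Rightarrow$ (ii) is immediate from $\mathcal F(F;E)\subseteq\mathcal F\circ\K_\A(F;E)\subseteq\K_\A(F;E)$, so a norm-dense subset of $\K_\A(F;E)$ already sits inside $\mathcal F\circ\K_\A(F;E)$.

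For the substantive direction (ii) $\Rightarrow$ (i), I would argue by trace duality. By Proposition~\ref{PAidentity} it suffices to show $Id_E\in\overline{\mathcal F(E;E)}^{\,\tau_\A}$, and since $\mathcal F(E;E)$ is a linear subspace, Hahn--Banach reduces this to proving that every $\tau_\A$-continuous functional $\phi$ on $\mathcal L(E;E)$ vanishing on $\mathcal F(E;E)$ also satisfies $\phi(Id_E)=0$. Continuity yields an $\A$-compact $K$ with $|\phi(T)|\le C\sup_{y\in K}\|Ty\|$; by Theorem~\ref{carlste} I may take $K\subseteq\co\{x_n\}$ for an $\A$-null (hence norm-null) sequence $(x_n)$, and since $\sup_{y\in K}\|Ty\|\le\sup_n\|Tx_n\|$ I can absorb $C$ into the sequence. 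Factoring $\phi$ through $T\mapsto(Tx_n)_n\in c_0(E)$ and using $c_0(E)'=\ell_1(E')$, I obtain $(a_n)$ with $\sum_n\|a_n\|<\infty$ and $\phi(T)=\sum_n a_n(Tx_n)$ for all $T$. Testing on rank-one operators shows that $\phi|_{\mathcal F(E;E)}=0$ is equivalent to the nuclear operator $B:=\sum_n a_n\underline{\otimes}\,x_n\in\mathcal L(E;E)$ being zero.

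Now I would introduce the nuclear map $\Lambda\colon E\to\ell_1$, $\Lambda x=(a_n(x))_n$, so that $B=U\circ\Lambda$, where $U\colon\ell_1\to E$ is defined by $Ue_n=x_n$. The crucial point is that, because $(x_n)$ is $\A$-null, \cite[Lemma~1.2]{CaSt} factors $x_n=Vy_n$ with $V\in\A$ and $(y_n)$ null, so $U=V\circ W$ with $W$ compact and hence $U\in\K_\A(\ell_1;E)$ by Proposition~\ref{a-comp con F}. Note $\phi(Id_E)=\sum_n a_n(x_n)=\operatorname{tr}(\Lambda U)$, a genuine trace since $\Lambda U\in\mathcal N(\ell_1;\ell_1)$ and $\ell_1$ has the approximation property. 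Invoking (ii), I pick finite rank $R_k\to U$ in operator norm; then $\|\Lambda R_k-\Lambda U\|_{\mathcal N}\le\|\Lambda\|_{\mathcal N}\|R_k-U\|\to0$, so $\operatorname{tr}(\Lambda R_k)\to\operatorname{tr}(\Lambda U)=\phi(Id_E)$. On the other hand, trace commutation for finite rank operators gives $\operatorname{tr}(\Lambda R_k)=\operatorname{tr}(R_k\Lambda)$, and since $U\Lambda=B=0$ one has $R_k\Lambda=(R_k-U)\Lambda$ with $\|R_k\Lambda\|_{\mathcal N}\le\|R_k-U\|\,\|\Lambda\|_{\mathcal N}\to0$, whence $\operatorname{tr}(R_k\Lambda)\to0$. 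Combining the two limits forces $\phi(Id_E)=0$, completing the Hahn--Banach argument.

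The hard part is exactly this final reduction: the hypothesis only supplies finite rank approximations, defined on $\ell_1$, of operators mapping \emph{into} $E$, and it must be converted into information about $Id_E$ itself. The two devices that unlock this are the factorization of the $\A$-null sequence through a compact operator, which is what places $U$ in $\K_\A(\ell_1;E)$ and makes (ii) applicable, and the passage from the operator norm to the nuclear norm (via the ideal inequality $\|SN\|_{\mathcal N}\le\|S\|\,\|N\|_{\mathcal N}$), where the trace is continuous. I would expect the bookkeeping around well-definedness and continuity of the trace on $\ell_1$, together with the identity $U\Lambda=0$, to be the only delicate checks; everything else is formal, and the construction of $U$ via Proposition~\ref{paso por ell1} could equally be used if one prefers to realize $K$ directly through $\ell_1$.
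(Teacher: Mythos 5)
Your cycle (i) $\Rightarrow$ (iii) $\Rightarrow$ (ii) is fine, and the reduction of (ii) $\Rightarrow$ (i) via Hahn--Banach to the representation $\phi(T)=\sum_n a_n(Tx_n)$ with $(a_n)\in\ell_1(E')$ and $(x_n)$ $\A$-null is correct, as are the factorization $B=U\Lambda$, the fact $U\in\K_\A(\ell_1;E)$, and the first limit $\operatorname{tr}(\Lambda R_k)\to\operatorname{tr}(\Lambda U)$ (legitimate because $\ell_1$ has the approximation property). The fatal step is the second limit: from $\|R_k\Lambda\|_{\mathcal N(E;E)}=\|(R_k-U)\Lambda\|_{\mathcal N(E;E)}\to 0$ you conclude $\operatorname{tr}(R_k\Lambda)\to 0$. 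This needs the trace to be $\|\cdot\|_{\mathcal N(E;E)}$-continuous on the finite rank operators of $E$, and by Grothendieck's criterion that continuity is \emph{equivalent} to $E$ having the classical approximation property, which you cannot assume (and which is strictly stronger than the $\A$-approximation property you are proving: for $\A=\mathcal N^p$, $1<p<2$, every space has the $\A$-approximation property while spaces failing AP exist). Concretely, if $E$ fails AP there are $(a_n)\subset E'$, $(x_n)\subset E$ with $\sum_n\|a_n\|\|x_n\|<\infty$, $\sum_n a_n(x)x_n=0$ for all $x$, and $\sum_n a_n(x_n)=1$; the partial sums are finite rank operators whose $\mathcal N(E;E)$-norms tend to $0$ while their traces tend to $1$. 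Trace-continuity on $\mathcal N(\ell_1;\ell_1)$ does not transfer to $\mathcal N(E;E)$, and the identity $U\Lambda=0$ \emph{as an operator} carries no information about $\operatorname{tr}(\Lambda U)$ precisely when $E$ lacks AP.

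The gap is not repairable within your scheme, because you invoke the hypothesis only through the existence of finite rank approximants, in operator norm, of the single operator $U\in\K_\A(\ell_1;E)$. But $\K_\A(\ell_1;E)\subset\K(\ell_1;E)$, and since $\ell_1'=\ell_\infty$ has the approximation property, $\mathcal F(\ell_1;E)$ is norm-dense in $\K(\ell_1;E)$ for \emph{every} Banach space $E$. So the fragment of (ii) you actually use is unconditionally true, and your argument, if valid, would show that every Banach space has the $\A$-approximation property for every ideal $\A$; taking $\A=\K$ this contradicts Enflo's example. This is exactly why the paper's proof of (iii) $\Rightarrow$ (i) does not pass through $F=\ell_1$: it builds a tailored space $F$ from the compact set $L$ via \cite[Lemma~4.11]{RYAN}, applies the density hypothesis to the injectivized operator $\overline{T\circ i}\in\K_\A(F/\ker T\circ i;E)$, and then converts the resulting finite rank operator $S=\sum_j y_j'\underline{\otimes}x_j$ into a finite rank operator on $E$ by approximating each functional $y_j'$ by $(\overline{T\circ i})'(x_j')$ with $x_j'\in E'$, uniformly on the compact set $q(i^{-1}(L))$; the injectivity of $\overline{T\circ i}$ is what permits this lifting of functionals back to $E'$, and it is exactly what your factorization through $\ell_1$ destroys.
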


\begin{proof}
To prove that (i) implies (ii) take $T \in \K_{\A}(F;E)$ and
$\ep>0$. Since $T(B_F)$ is relatively $\A$-compact and $E$ has the
$\A$-approximation property, by Proposition~\ref{PAidentity} there exists $S\in \mathcal
F(E;E)$ such that $\sup_{\ x \in T(B_F)}\|Sx-x\|\leq \ep$. Then,
$\|ST-T\|\leq \ep$ and (ii) holds.

The inclusion $\mathcal F \circ \K_{\A}\subset \mathcal F$ proves
that (ii) implies (iii). To show that (iii) implies (i) take $K\subset E$ an $\A$-compact
set and $\ep>0$. There exist a Banach space $G$, an absolutely
convex compact set $L\subset G$ and an operator $T\in \A(G;E)$ such
that $K\subset T(L)$. Now, we may find an absolutely convex compact
set $\widetilde{L} \subset G$, a Banach space $F$ and an injective
operator $i\in \mathcal L(F;G)$ such that $L\subset \widetilde{L}$
and $i(B_F)=\widetilde{L}$, see for
instance~\cite[Lemma~4.11]{RYAN}. In particular $i$ is a compact
operator and $i^{-1}(L) \subset B_F$ is compact.

Consider the following diagram
$$
\xymatrix{
F \ar[r]^{i} \ar[dr]_{q}&  G \ar[r]^{T} & E  \\
& F/\ker T\circ i  \ar[ur]_{\overline{T\circ i}}  &
}
$$
where $q$ is the quotient map and $T\circ i={\overline{T\circ i}}
\circ q$. Since $\overline{T\circ i}(B_{ F/\ker T\circ i})=T\circ
i(B_F)=T(\widetilde{L})$ is an $\A$-compact set, then
$\overline{T\circ i} \in \K_{\A}(F/\ker T\circ i;E)$. By hypothesis,
there exists a finite rank operator $S\colon F/\ker T\circ i \to E$,
with $\|S-\overline{T\circ i}\|\leq \ep/2$. Write
$S=\sum_{j=1}^{n} y'_j\underline{\otimes}x_j$ with $x_1,\ldots,x_n
\in E$ and $y'_1,\ldots, y'_n \in (F/\ker T\circ i)'$. To find $R
\colon E\to E$ a finite rank operator approximating the identity on
$K$, note that $q(i^{-1}(L)) \subset F/\ker T\circ i$ is compact.
Then, for $\delta=\ep/2\sum_{j=1}^{n} \|x_j\|$ take
$x'_1,\ldots,x'_n \in E'$ such that
$$
\sup_{w \in
q(i^{-1}(L))}|y'_j(w)-(\overline{T\circ i})'(x'_j)(w)|\leq \delta,
$$
for all $j=1,\ldots,n$, and  define $R$ by
$R=\sum_{j=1}^{n} x'_j\underline{\otimes}x_j$. Then,
$$
\begin{array}{rl}
\sup_{x \in K}\|(Id-R)(x)\|\leq &\sup_{y \in L}\|(Id -R)(Ty)\|\\
                   =&\sup_{y \in L}\|(T\circ i-R\circ T\circ i)(i^{-1}(y))\|\\
                   \leq& \sup_{y \in L}\|(T\circ i-S\circ q)(i^{-1}(y))\| +
\sup_{y \in L}\|(S\circ q-  R\circ T\circ i)(i^{-1}(y))\|.
\end{array}
$$
On one hand we have
$$
\begin{array}{rl}
\sup_{y \in L}\|(T\circ i-S\circ q)(i^{-1}(y))\| \le& \sup_{z \in B_F}\|(T\circ
i-S\circ q)(z)\|\\
\le &\sup_{w \in B_{F/\ker T\circ i}} \|(\overline{T\circ
i}-S)(w)\|\\ =& \|S-\overline{T\circ i}\|.
\end{array}
$$
On the other hand,
$$
\begin{array}{rl}
\sup_{y \in L}\|(S\circ q-  R\circ T\circ i)(i^{-1}(y))\| =& \sup_{y \in
L}\|(\sum_{j=1}^{n}
y'_j\circ q \underline{\otimes}x_j -\sum_{j=1}^{n} x'_j\circ T\circ i
\underline{\otimes}x_j)(i^{-1}(y))\|\\
\le& \sum_{j=1}^{n} \sup_{w \in
q(i^{-1}(L))}|y'_j(w)-\overline{T\circ
i}'(x'_j)(w)|\|x_j\|\\
\leq& \sum_{j=1}^{n} \delta \|x_j\|.
\end{array}
$$
Therefore,  $\sup_{x \in K}\|(Id-R)(x)\| \leq \ep$,
proving (i).
\end{proof}

The analogous result concerning the $s\A$-approximation property requires the norm
$\|.\|_{\K_{\A}}$. Its proof is similar to that given above and we omit it.

\begin{proposition}\label{PAyoper2}
Let $E$ be a Banach space and $\A$ a Banach operator ideal. The following are equivalent.
\begin{enumerate}[\upshape (i)]
\item $E$ has the $s\A$-approximation property.
\item For all Banach spaces $F$, $\mathcal F \circ \K_{\A}(F;E)$ is
$\|.\|_{\K_{\A}}$-dense in $\K_{\A}(F;E)$.
\item For all Banach spaces $F$, $\mathcal
F(F;E)$ is $\|.\|_{\K_{\A}}$-dense in $\K_{\A}(F;E)$.
\end{enumerate}
\end{proposition}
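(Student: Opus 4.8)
The plan is to adapt the proof of Proposition~\ref{PAyoper1} almost verbatim, replacing the supremum seminorm $\sup_{x\in K}\|\cdot\|$ by the size $\m_\A(\,\cdot\,;E)$ and the operator norm by $\|\cdot\|_{\K_\A}$ throughout, and invoking the $s\A$-versions of Proposition~\ref{PAidentity} in place of its $\A$-versions. Before starting I would record three elementary properties of $\m_\A$ that follow at once from its definition and will be used freely: monotonicity ($\m_\A(K_1;E)\le\m_\A(K_2;E)$ when $K_1\subset K_2$), positive homogeneity ($\m_\A(\lambda K;E)=\lambda\,\m_\A(K;E)$ for $\lambda\ge 0$), and subadditivity with respect to Minkowski sums ($\m_\A(K_1+K_2;E)\le\m_\A(K_1;E)+\m_\A(K_2;E)$), the last obtained by gluing the two factoring operators into a single operator on an $\ell_1$-sum.

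For (i)$\Rightarrow$(ii), given $T\in\K_\A(F;E)$ and $\ep>0$, the set $K=T(B_F)$ is relatively $\A$-compact, so the $s\A$-version of Proposition~\ref{PAidentity} yields $S\in\mathcal F(E;E)$ with $\m_\A((S-Id)(K);E)\le\ep$. Then $ST\in\mathcal F\circ\K_\A(F;E)$ and, by definition of $\|\cdot\|_{\K_\A}$, $\|ST-T\|_{\K_\A}=\m_\A((S-Id)(T(B_F));E)\le\ep$. The implication (ii)$\Rightarrow$(iii) is immediate from the inclusion $\mathcal F\circ\K_\A\subset\mathcal F\subset\K_\A$: density of the smaller class forces density of the larger one.

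The substance is (iii)$\Rightarrow$(i). By the $s\A$-version of Proposition~\ref{PAidentity} it suffices, for a fixed $\A$-compact $K\subset E$ and $\ep>0$, to produce $R\in\mathcal F(E;E)$ with $\m_\A((Id-R)(K);E)\le\ep$. I would reproduce the factorization of that proof: write $K\subset T(L)$ with $T\in\A(G;E)$ and $L\subset G$ absolutely convex and compact, enlarge $L$ to $\widetilde L=i(B_F)$ for an injective compact $i\in\mathcal L(F;G)$ via \cite[Lemma~4.11]{RYAN}, set $C=i^{-1}(L)\subset B_F$ (compact), pass to $F/\ker(T\circ i)$ through the quotient $q$, and obtain $\overline{T\circ i}\in\K_\A(F/\ker(T\circ i);E)$. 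Hypothesis (iii), now used in the $\|\cdot\|_{\K_\A}$ norm, gives a finite rank $S=\sum_{j=1}^n y_j'\underline{\otimes}x_j$ with $\|\,\overline{T\circ i}-S\,\|_{\K_\A}\le\ep/2$, and, exactly as before, functionals $x_1',\dots,x_n'\in E'$ with $\sup_{w\in q(C)}|y_j'(w)-\overline{T\circ i}'(x_j')(w)|\le\delta$, defining $R=\sum_{j=1}^n x_j'\underline{\otimes}x_j$.

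The key estimate, and the point where the argument genuinely departs from the supremum-norm case, is the final bound. Since $K\subset\overline{T\circ i}(q(C))$ and $q(C)\subset B_{F/\ker(T\circ i)}$, monotonicity together with the decomposition $(Id-R)\overline{T\circ i}=(\overline{T\circ i}-S)+(S-R\,\overline{T\circ i})$ gives, by subadditivity,
$$
\m_\A((Id-R)(K);E)\le\m_\A\big((\overline{T\circ i}-S)(q(C));E\big)+\m_\A\big((S-R\,\overline{T\circ i})(q(C));E\big).
$$
The first summand is at most $\|\,\overline{T\circ i}-S\,\|_{\K_\A}\le\ep/2$. For the second, $S-R\,\overline{T\circ i}=\sum_j(y_j'-\overline{T\circ i}'(x_j'))\underline{\otimes}x_j$ and on the compact set $q(C)$ each coefficient is bounded by $\delta$; hence $(S-R\,\overline{T\circ i})(q(C))\subset\delta\,\phi(B_{\ell_\infty^n})$, where $\phi\colon\ell_\infty^n\to E$ is the finite rank map $e_j\mapsto x_j$. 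Thus this term is at most $\delta\,\|\phi\|_\A$, and choosing $\delta=\ep/(2\|\phi\|_\A)$ closes the estimate. The main obstacle is precisely this last step: over the whole ball $B_{F/\ker(T\circ i)}$ the full dual norms $\|y_j'-\overline{T\circ i}'(x_j')\|$ cannot be made small, so one must localize the finite rank error to the compact set $q(C)$ and measure it through the fixed finite-dimensional operator $\phi$ (whose $\A$-norm is finite since $\mathcal F\subset\A$), rather than through $\sum_j\|x_j\|$ as in the proof of Proposition~\ref{PAyoper1}.
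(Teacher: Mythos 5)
Your proof is correct and is essentially the paper's own argument: the paper in fact omits the proof of Proposition~\ref{PAyoper2}, saying only that it is ``similar to that given above,'' and your write-up is precisely that adaptation of Proposition~\ref{PAyoper1}, with the added $\m_\A$-ingredients (monotonicity, homogeneity, subadditivity, the bound $\m_\A\big(\phi(B_{\ell_\infty^n});E\big)\le\|\phi\|_\A$ for the finite rank $\phi\in\A(\ell_\infty^n;E)$, and the first-summand estimate $\m_\A\big((\overline{T\circ i}-S)(q(C));E\big)\le\|\overline{T\circ i}-S\|_{\K_\A}$) all being valid. Two small remarks: in your justification of subadditivity the two factoring operators should be glued on the $\ell_\infty$-sum of the domains rather than the $\ell_1$-sum, so that $M_1\times M_2$ lies in the unit ball while the projections still have norm one; and the ``genuine departure'' you describe at the end is not really forced, since rank-one operators satisfy $\|x'\underline{\otimes}x\|_{\A}=\|x'\|\,\|x\|$ in any Banach operator ideal, so the paper's choice $\delta=\ep/2\sum_{j=1}^n\|x_j\|$ also yields $\m_\A\big((S-R\,\overline{T\circ i})(q(C));E\big)\le\sum_{j=1}^n\delta\|x_j\|\le\ep/2$ by splitting the error set into the Minkowski sum of its rank-one pieces --- your $\phi$-device is a correct alternative, not a necessity.
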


The notion of the minimal kernel of an operator ideal allows us to  give another characterization
of the $s\A$-approximation property.

\begin{corollary}
Let $E$ be a Banach space and $\A$ a Banach operator ideal. Then, $E$ has the
$s\A$-approximation property if and only if  $\K_{\A}(F;E)=\K^{min}_{\A}(F;E)$, for all Banach
spaces $F$.
\end{corollary}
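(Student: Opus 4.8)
The plan is to read the statement off Proposition~\ref{PAyoper2}, which already translates the $s\A$-approximation property of $E$ into the assertion that $\mathcal F(F;E)$ is $\|\cdot\|_{\K_\A}$-dense in $\K_\A(F;E)$ for every Banach space $F$. Thus it suffices to prove, for each fixed $F$, that $\mathcal F(F;E)$ is $\|\cdot\|_{\K_\A}$-dense in $\K_\A(F;E)$ exactly when $\K_\A(F;E)=\K_\A^{min}(F;E)$. The heart of the matter is the identification
$$\K_\A^{min}(F;E)=\overline{\mathcal F(F;E)}^{\,\|\cdot\|_{\K_\A}},$$
i.e.\ that the minimal kernel of $\K_\A$ is precisely the closure of the finite rank operators in the $\K_\A$-norm. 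Once this is in hand the corollary is immediate, since one always has $\K_\A^{min}(F;E)\subseteq\overline{\mathcal F(F;E)}^{\,\|\cdot\|_{\K_\A}}\subseteq\K_\A(F;E)$, so the outer two coincide if and only if the minimal kernel exhausts $\K_\A(F;E)$.

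For the easy implication ($\K_\A=\K_\A^{min}$ forces the $s\A$-property) I would only need that $\mathcal F(F;E)$ is dense in $\K_\A^{min}(F;E)$ for its \emph{minimal} norm, which requires no extra hypothesis: writing $T=R\circ S\circ Q$ with $R,Q\in\overF$ and $S\in\K_\A$, I approximate $R,Q$ in operator norm by finite rank $R_k,Q_k$ and use the ideal inequality
$$\|R_kSQ_k-RSQ\|_{\K_\A^{min}}\le\|R_k\|\,\|S\|_{\K_\A}\,\|Q_k-Q\|+\|R_k-R\|\,\|S\|_{\K_\A}\,\|Q\|$$
to get $R_kSQ_k\to T$ in the minimal norm. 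Since $\|\cdot\|_{\K_\A}\le\|\cdot\|_{\K_\A^{min}}$, if $\K_\A(F;E)=\K_\A^{min}(F;E)$ this already makes $\mathcal F(F;E)$ dense in the $\K_\A$-norm, and Proposition~\ref{PAyoper2} closes this direction. The same computation read in $\|\cdot\|_{\K_\A}$ gives the inclusion $\K_\A^{min}\subseteq\overline{\mathcal F}^{\,\|\cdot\|_{\K_\A}}$.

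The hard part will be the reverse inclusion $\overline{\mathcal F}^{\,\|\cdot\|_{\K_\A}}\subseteq\K_\A^{min}$, which is what upgrades $\|\cdot\|_{\K_\A}$-density into the equality $\K_\A=\K_\A^{min}$. Here I would take $T$ a $\K_\A$-limit of finite rank operators, pass to a subsequence of partial sums so that $T=\sum_n S_n$ with $S_n\in\mathcal F(F;E)$ and $\|S_n\|_{\K_\A}$ decaying geometrically, and then assemble a Grothendieck-type diagonal factorization $T=R\circ D\circ Q$ through a $c_0$-sum of the finite-dimensional spaces through which the $S_n$ factor: the outer maps $Q$ and $R$ carry rapidly decaying scalar weights, so their truncations are finite rank and they lie in $\overF$, while the middle diagonal $D$ absorbs the compensating weights and belongs to $\K_\A$ because the weighted series of its finite-dimensional blocks converges in the $\K_\A$-norm; this places $T$ in $\overF\circ\K_\A\circ\overF=\K_\A^{min}$. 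The delicate point, and the main obstacle, is the local step of factoring each $S_n$ as $B_nA_n$ through a finite-dimensional space with $\|A_n\|\le1$ and $\|B_n\|_{\K_\A}$ dominated by a fixed multiple of $\|S_n\|_{\K_\A}$, i.e.\ the $\K_\A$-norm control of the finite-dimensional pieces that makes the weighted blocks summable. This is exactly an accessibility property of $\K_\A$, available because $\K_\A$ is surjective, hence left-accessible, by Proposition~\ref{a-comp con F} (and totally accessible when $\A$ is right-accessible, by Proposition~\ref{tot-acces}); it is this control that guarantees the factorization lands in the minimal kernel and hence completes the equivalence.
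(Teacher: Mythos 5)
Your reduction via Proposition~\ref{PAyoper2} is exactly the paper's, and your easy direction is correct: finite rank operators are dense in $\K_\A^{min}(F;E)$ for the minimal norm, and since $\|\cdot\|_{\K_\A}\le\|\cdot\|_{\K_\A^{min}}$ this gives $\K_\A^{min}(F;E)\subseteq\overline{\mathcal F(F;E)}^{\|\cdot\|_{\K_\A}}$, so $\K_\A=\K_\A^{min}$ forces the $s\A$-approximation property. The genuine gap is in your hard inclusion $\overline{\mathcal F(F;E)}^{\|\cdot\|_{\K_\A}}\subseteq\K_\A^{min}(F;E)$, and it is a handedness problem. Your local step is indeed available: $\K_\A$ is surjective (Proposition~\ref{a-comp con F}), so a finite rank $S_n$ factors as $S_n=B_n\circ A_n$ with $A_n\colon F\to Z_n=F/\ker S_n$ the quotient map and $\|B_n\|_{\K_\A}=\|S_n\|_{\K_\A}$. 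But this controls $B_n$ \emph{as an operator into $E$}. In your three-factor assembly $T=R\circ D\circ Q$, the middle diagonal $D$ has to map the $c_0$-sum of the $Z_n$ into a direct sum of the finite-dimensional ranges $Y_n=B_n(Z_n)\subset E$, so that $R$ can collect them; its blocks are therefore the corestrictions $B_n^{co}\colon Z_n\to Y_n$, and the summability you need is of $\|B_n^{co}\|_{\K_\A}=\m_\A\bigl(B_n(B_{Z_n});Y_n\bigr)$, not of $\m_\A\bigl(B_n(B_{Z_n});E\bigr)=\|B_n\|_{\K_\A}$. The paper itself warns that $\m_\A$ depends on the ambient space (see the discussion after Remark~\ref{m_p=m_np}, citing \cite{GaLaTur}), and equality of the two sizes is exactly what requires $\K_\A$ to be injective (Corollary~\ref{m_A no cambia}) or a right-accessibility hypothesis --- neither is available for an arbitrary $\A$. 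So the claim ``$D$ belongs to $\K_\A$ because the weighted series of its blocks converges in the $\K_\A$-norm'' is unjustified, and the construction does not land in $\overF\circ\K_\A\circ\overF$.

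What your construction does prove, if you let $D$ map directly into $E$ (so no corestrictions appear), is $T=D\circ Q\in\K_\A\circ\overF(F;E)$: an approximable factor on the \emph{domain} side. That is the wrong side. Surjectivity of $\K_\A$ gives left-accessibility, and left-accessibility pairs with the identity $\K_\A^{min}=\overF\circ\K_\A$, where the approximable factor sits on the \emph{target} side --- precisely the factor that quotient-type (left-accessible) factorizations can never produce; with only left-accessibility, $\K_\A\circ\overF$ may a priori be strictly larger than $\overF\circ\K_\A=\K_\A^{min}$ (the two coincide under total accessibility, e.g.\ when $\A$ is right-accessible by Proposition~\ref{tot-acces}, but the corollary assumes nothing of the sort on $\A$). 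This is also where your proof diverges from the paper's: the paper never builds a factorization by hand, it quotes \cite[Proposition~25.2]{DF} for the isometric identity $\K_\A^{min}=\overF\circ\K_\A$ valid for the surjective ideal $\K_\A$ and then applies Proposition~\ref{PAyoper2}, so the approximable factor stays on the target side throughout. To salvage your route you would need either that cited identification or a proof that the $\m_\A$-size of the sets $B_n(B_{Z_n})$ is the same measured in $Y_n$ as in $E$, which fails in general.
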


\begin{proof}
Since $\K_{\A}$ is a surjective ideal, it is left-accessible. Thus, by~\cite[Proposition~25.2]{DF},
$\K^{min}_{\A} = \overline{\mathcal F}\circ \K_{\A}$. An application of Proposition~\ref{PAyoper2}
completes the proof.
\end{proof}

The above corollary recovers the characterization of the $\kappa_p$-approximation property in
terms of the ideals $\K_p$ and $\K_p^{min}$, see the comments below \cite[Proposition 3.9]{GaLaTur}.
In general, the approximation property does not imply the $s\A$-approximation property. However, for
right-accessible ideals the claim is true as it happens with the  $\kappa_p$-approximation
property~\cite[Proposition~3.10]{GaLaTur}.

\begin{proposition} Let $E$ be a Banach space and $\A$ be a right-accessible Banach operator ideal.
If $E$ has the approximation property, then $E$ has the
$s\A$-approximation property.
\end{proposition}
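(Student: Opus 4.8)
The plan is to deduce the $s\A$-approximation property from the characterization recorded in the corollary immediately preceding this statement: $E$ enjoys the $s\A$-approximation property if and only if $\K_\A(F;E)=\K_\A^{min}(F;E)$ for every Banach space $F$ (equivalently, by Proposition~\ref{PAyoper2}, $\mathcal F(F;E)$ is $\|\cdot\|_{\K_\A}$-dense in $\K_\A(F;E)$). Since the inclusion $\K_\A^{min}\subset\K_\A$ is always contractive, the whole problem reduces to proving the reverse inclusion $\K_\A(F;E)\subset\K_\A^{min}(F;E)$ for every $F$.

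First I would record two structural facts that make the hypotheses usable. Every $\A$-compact operator is compact, since an $\A$-compact set is contained in some $T(M)$ with $T$ bounded and $M$ compact, hence relatively compact; thus $\K_\A\subset\K$, and together with the classical approximation property of $E$ and Grothendieck's theorem this already gives that every $T\in\K_\A(F;E)$ is approximable, $T\in\overF(F;E)$. Moreover, since $\A$ is right-accessible, $\K_\A$ is surjective and hence left-accessible, so $\K_\A^{min}=\overF\circ\K_\A$; it is in fact totally accessible by Proposition~\ref{tot-acces}, and Corollary~\ref{a-min sur} gives $\K_\A=(\A^{min})^{sur}$, whence $\|T\|_{\K_\A}=\|T\circ\q_F\|_{\A^{min}}$ for the canonical surjection $\q_F\colon\ell_1(B_F)\twoheadrightarrow F$.

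With these in hand, the core of the argument is to upgrade the operator-norm approximability of $T$ to membership in the minimal kernel $\K_\A^{min}$. I would carry this out by transporting the problem through $\q_F$: the operator $W:=T\circ\q_F$ lies in $\A^{min}(\ell_1(B_F);E)$, where finite rank operators are dense in the $\A^{min}$-norm, and then use the approximation property of $E$ together with the total accessibility of $\K_\A$ to produce finite rank operators on $F$ that approximate $T$ in the $\|\cdot\|_{\K_\A}$-norm. This is exactly the mechanism behind the analogous statement for the $\kappa_p$-approximation property in \cite{GaLaTur}, and it rests on the Defant--Floret theory of accessible ideals and minimal kernels \cite{DF}.

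The main obstacle is precisely this last upgrade. The norm $\|\cdot\|_{\K_\A}$ (equivalently, the minimal-kernel norm) is strictly finer than the operator norm, so the mere operator-norm density of $\mathcal F(F;E)$ in $\K_\A(F;E)$ coming from the classical approximation property is not enough by itself: the naive estimates, obtained by composing $T$ with a finite rank net $\Phi\to Id_E$ on the range and bounding $\m_\A\big((\Phi-Id)(T(B_E));E\big)$, inevitably involve the operator norms $\|\Phi-Id\|$, which are \emph{not} controlled under the mere (as opposed to metric) approximation property. Overcoming this is where right-accessibility is essential: the total accessibility of $\K_\A$ supplies the finite-dimensional factorizations needed to replace these uncontrolled estimates, so that the plain approximation property of $E$ suffices to force $\K_\A(F;E)=\K_\A^{min}(F;E)$, and hence the $s\A$-approximation property.
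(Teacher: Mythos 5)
Your opening reduction is the same as the paper's: by the corollary preceding the statement, it suffices to show $\K_\A(F;E)=\K_\A^{min}(F;E)$ for every Banach space $F$, and your structural preliminaries (in particular $\K_\A=(\A^{min})^{sur}$ and the total accessibility of $\K_\A$ from Proposition~\ref{tot-acces}) are correct. However, the core step --- deducing the inclusion $\K_\A(F;E)\subset\K_\A^{min}(F;E)$ from the \emph{mere} approximation property of $E$ --- is never actually carried out. You correctly identify it as ``the main obstacle'' and then assert that total accessibility ``supplies the finite-dimensional factorizations needed,'' but no argument is given, and this step is the entire content of the proposition. Your concrete suggestion of transporting through $\q_F$ also stalls at exactly this point: for $T\in\K_\A(F;E)$ one indeed has $T\circ\q_F\in\A^{min}(\ell_1(B_F);E)$ (this needs no approximation property at all), and finite rank operators are $\|\cdot\|_{\A^{min}}$-dense there, but those approximants live on $\ell_1(B_F)$ and need not factor through $\q_F$, so they produce no finite rank operators on $F$ approximating $T$ in $\|\cdot\|_{\K_\A}$. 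What is really at stake is a commutation (or lifting) problem: whether the surjective hull and the minimal kernel commute at the pair $(F;E)$.

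The paper resolves precisely this with a citation you are missing: by \cite[Proposition~25.11]{DF}, when the \emph{target} space $E$ has the approximation property one has $(\K_\A^{min})^{sur}(F;E)=(\K_\A^{sur})^{min}(F;E)$, and since $\K_\A$ is surjective the right-hand side is $\K_\A^{min}(F;E)$. Combining this with Proposition~\ref{tot-acces}, which gives $\K_\A=(\K_\A^{min})^{sur}$ because $\A$ is right-accessible, yields $\K_\A(F;E)=\K_\A^{min}(F;E)$, and the corollary finishes the proof. Note also that your proposal conflates the roles of the two hypotheses: in the paper, right-accessibility enters \emph{only} through Proposition~\ref{tot-acces} (to identify $\K_\A$ with $(\K_\A^{min})^{sur}$), while the approximation property of $E$ enters \emph{only} through the Defant--Floret commutation result; total accessibility by itself does not convert operator-norm approximation into $\|\cdot\|_{\K_\A}$-approximation. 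To close the gap you would need to invoke, or reprove, that commutation theorem.
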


\begin{proof}
If $E$ has the approximation property,  by~\cite[Proposition 25.11]{DF},
$$
(\K_\A^{min})^{sur}(F;E)=(\K_\A^{sur})^{min}(F;E)=\K_\A^{min}(F;E),
$$
for every Banach operator ideal $\A$ and for every Banach space $F$. Since $\A$ is
right-accessible, applying  Proposition~\ref{tot-acces} we see that $\K_\A(F;E)=\K_\A^{min}(F;E)$
for every Banach space $F$. Therefore, by the above corollary, $E$ has the $s\A$-approximation
property.
\end{proof}

\subsection{On the dual space $E'_{\A}$ and the $\epsilon$-product of Schwartz}

The classic approximation property has a well known reformulation
in terms of the $\epsilon$-product of Schwartz. More precisely, $E$
has the approximation property if and only if $E\otimes F$ is dense
in $\mathcal L_{\epsilon}(E'_{c};F)$ for every locally convex space
$F$,~\cite[Expos\'e~14]{Sch}. This result remains valid for the
$\A$-approximation property with a general Banach operator
ideal $\A$. We denote by $\mathcal
L_{\epsilon}(E'_{\A};F)$ the space of all linear continuous maps
from $E'_{\A}$ to  a locally convex space $F$, endowed with the topology of uniform
convergence on all equicontinuous sets of $E'$. The topology on  $\mathcal
L_{\epsilon}(E'_{\A};F)$ is
generated by the seminorms $\beta(T)=\sup_{x' \in B_{E'}}
\alpha(Tx')$, where $\alpha \in cs(F)$, the set of all
continuous seminorms on $F$. As usual, $U^\circ$
denotes the polar set of a set $U$.

\begin{theorem}
Let $E$ be a Banach space and $\A$ a Banach operator ideal. The following statements are equivalent.
\begin{enumerate}[\upshape (i)]
\item $E$ has the $\A$-approximation property.
\item $E\otimes F$ is dense in $\mathcal L_{\epsilon}(E'_{\A};F)$ for all
locally convex spaces $F$.
\item $E\otimes E'$ is dense in $\mathcal L_{\epsilon}(E'_{\A};E'_{\A})$.
\end{enumerate}
\end{theorem}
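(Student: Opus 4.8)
The plan is to prove the cyclic chain (i) $\Rightarrow$ (ii) $\Rightarrow$ (iii) $\Rightarrow$ (i), where the middle step is a trivial specialization and the outer two rest on a single transpose identity that turns a finite rank operator on $E$ into a tensor in $E\otimes F$ acting on $E'_\A$. Two preliminary observations are needed. First, each point evaluation $x'\mapsto x'(x)$ is $\tau_\A$-continuous, since singletons (and, after scaling, their multiples) are $\A$-compact; hence every $\sum_i x_i\otimes y_i$ genuinely defines an element of $\mathcal L_\epsilon(E'_\A;F)$ via $x'\mapsto \sum_i x'(x_i)y_i$, so the inclusion $E\otimes F\subset \mathcal L_\epsilon(E'_\A;F)$ is meaningful. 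Second, the seminorms $\{q_K\}$ generating $\tau_\A$ form a directed family: given $\A$-compact $K_1,K_2$, their union is $\A$-compact. This is the only point requiring an argument, and I would obtain it from equivalence (iii) of Theorem~\ref{carlste}: writing $K_i\subset \co\{x_n^{(i)}\}$ for $\A$-null sequences and using \cite[Lemma~1.2]{CaSt} to write $x_n^{(i)}=S_i(u_n^{(i)})$ with $u_n^{(i)}\to 0$ and $S_i\in\A$, the interleaved sequence is $\A$-null for the operator $(u,w)\mapsto S_1u+S_2w$ on $X_1\oplus X_2$, which belongs to $\A$; its balanced convex hull contains $K_1\cup K_2$. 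This allows me to collapse any basic neighbourhood, which involves finitely many seminorms, to a single $\A$-compact set.

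For (i) $\Rightarrow$ (ii), let $T\in\mathcal L_\epsilon(E'_\A;F)$ and fix a basic neighbourhood cut out by $\alpha_1,\dots,\alpha_m\in cs(F)$ and $\ep>0$. Continuity of $T$ from $\tau_\A$ yields $\A$-compact sets and constants with $\alpha_i\circ T\le C_i\, q_{K_i}$; by the directedness above I replace these by a single $\A$-compact $K$ and constant $C$. The $\A$-approximation property and Proposition~\ref{PAidentity} then provide $R=\sum_j x'_j\underline{\otimes} x_j\in\mathcal F(E;E)$ with $\sup_{x\in K}\|x-Rx\|<\ep/C$. The candidate is $v=\sum_j x_j\otimes T(x'_j)\in E\otimes F$, and the crucial identity is that, as maps on $E'$, $v(y')=T(R'y')$, where $R'$ is the adjoint of $R$ and $R'y'=\sum_j y'(x_j)x'_j$. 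Hence, using $(y'-R'y')(x)=y'(x-Rx)$,
$$
\sup_{y'\in B_{E'}}\alpha_i\big(Ty'-v(y')\big)=\sup_{y'\in B_{E'}}\alpha_i\big(T(y'-R'y')\big)\le C\sup_{y'\in B_{E'}}q_K(y'-R'y')=C\sup_{x\in K}\|x-Rx\|<\ep
$$
for every $i$, so $v$ lies in the chosen neighbourhood and density follows.

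The implication (ii) $\Rightarrow$ (iii) is the specialization $F=E'_\A$. For (iii) $\Rightarrow$ (i), note that the identity belongs to $\mathcal L_\epsilon(E'_\A;E'_\A)$, so by (iii), given an $\A$-compact $K$ and $\ep>0$ there is $u=\sum_j x_j\otimes x'_j\in E\otimes E'$ with $\sup_{y'\in B_{E'}}q_K(y'-u(y'))<\ep$. Reading the same identity in reverse, $u(y')=y'\circ R$ for the finite rank operator $R=\sum_j x'_j\underline{\otimes} x_j$, so this quantity equals $\sup_{x\in K}\|x-Rx\|$. Thus $R$ approximates the identity uniformly on $K$ within $\ep$, which is precisely the $\A$-approximation property.

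The main obstacle I anticipate is bookkeeping rather than conceptual: pinning down the transpose identity $v(y')=T(R'y')$ (equivalently $u(y')=y'\circ R$) with the $E$ and $E'$ factors in their correct roles, and justifying the reduction of finitely many seminorms to one $\A$-compact set. Once the directedness of $\{q_K\}$ and this identity are established, all the estimates are immediate.
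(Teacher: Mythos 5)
Your proof is correct and follows essentially the same route as the paper's: the same cyclic chain (i) $\Rightarrow$ (ii) $\Rightarrow$ (iii) $\Rightarrow$ (i), the same key tensor $\sum_j x_j\otimes T(x'_j)$ built from a finite rank approximant of the identity on an $\A$-compact set, and the same transpose identity used in both outer implications. The only differences are cosmetic: you phrase the continuity of $T$ through seminorm estimates $\alpha\circ T\le C\,q_K$ where the paper uses polars (i.e.\ $T'(U^\circ)\subset K$), and you make explicit the directedness of the family of $\A$-compact sets (the union of two $\A$-compact sets is $\A$-compact), a fact the paper uses implicitly.
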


\begin{proof}
Suppose (i) holds. Fix  a locally convex space $F$,  a continuous
seminorm $\beta$ so that $\beta(T)=\sup_{x' \in B_{E '}}
\alpha(Tx')$, where $\alpha \in cs(F)$ and  $\ep >0$. Take $T \in
\mathcal L_{\epsilon}(E'_{\A};F)$, then we may find an absolutely 
convex $\A$-compact set $K \subset
E$ such that
$\sup_{x' \in K^\circ} \alpha(T(x'))\leq 1$. If $U=\{y \in F \colon
\alpha(y)\leq 1\}$, then $U$ is a neighborhood of $F$ and
$T'(U^\circ)\subset K$. Since $E$ has the $\A$-approximation property, there exists a
finite rank operator $S$ such that $\|Sx-x\| <\ep$ for all $x \in
K$. In particular, $\|Sx-x\| <\ep$ for all $x \in T'(U^\circ)$.
Then, for all $x'\in B_{E'}$ and $y' \in U^\circ$ we have
$|x'(S(T'(y'))-T'(y'))|<\ep$. Now, suppose that $S=\sum_{j=1}^{n}
x'_j \underline{\otimes}x_j$ with $x_1,\ldots,x_n \in E$,
$x'_1,\ldots,x'_n \in E'$, then
$$
|y'\big(\sum_{j=1}^{n}x'(x_j) T(x'_j) - T(x')\big)|<\ep,
$$
for  any $y' \in U^\circ$ and  $x'\in B_{E'}$. Therefore, taking
$R=\sum_{j=1}^{n} x_j \underline{\otimes}T(x'_j)$ we have
$\beta(R-T)< \ep$, which proves (ii).

That (ii) implies (iii) is clear. To finish the proof suppose that
(iii) holds. Take an absolutely convex $\A$-compact set $K\subset E$
and let $\beta$ be the continuous seminorm given by
$\beta(T)=\sup_{x' \in B_{E'}}\alpha(T)$, where $\alpha$ is the
Minkowski functional of $K^\circ$. Since $Id \in \mathcal L_{\epsilon}(E'_{\A};E'_{\A})$,
given $\ep >0$, there exist $S\in E\otimes E'$ such that $\beta(S-Id)< \ep$. Then, as above, 
$|x'(S'(x)-x)|<\ep$, for all $x' \in B_{E'}$ and $x \in K$.
Therefore, $\|S'(x)-x\|<\ep$ for all $x \in K$, and the proof
is complete.
\end{proof}

A Banach space $E$ has the classic approximation
property if and only if $E'_c$ has the approximation property, \cite[Expos\'e 14]{Sch}. Aron,
Maestre and Rueda show the analogous result for the $p$-approximation property \cite[Theorem
4.6]{AMR}. Here, we present a generalization of these results.

\begin{proposition} Let $E$ be a Banach space and $\A$ a Banach operator ideal. Then, $E$ has the
$\A$-approximation property if and only if $E'_\A$ has the approximation property.
\end{proposition}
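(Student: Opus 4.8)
The plan is to reduce the statement to the Theorem just proved, which says that $E$ has the $\A$-approximation property if and only if $E\otimes E'$ is dense in $\mathcal L_{\epsilon}(E'_{\A};E'_{\A})$. Recall that a locally convex space $G$ has the approximation property exactly when the identity lies in the closure of the finite rank operators $G'\otimes G$ inside $\mathcal L(G;G)$ endowed with the topology of uniform convergence on the precompact subsets of $G$; a routine argument (compose with a continuous operator and use that continuous images of precompact sets are precompact) shows this is equivalent to $G'\otimes G$ being dense in $\mathcal L(G;G)$ for that topology. Applying this to $G=E'_{\A}$, the proof reduces to two tasks: identifying the relevant dual and algebraic structures, and showing that on $\mathcal L(E'_{\A};E'_{\A})$ the topology appearing in the preceding Theorem (uniform convergence on $B_{E'}$) coincides with the topology of uniform convergence on the precompact subsets of $E'_{\A}$.

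First I would pin down the dual of $E'_{\A}$. Since $\|T\|\le\|T\|_{\A}$, every $\A$-compact set is relatively compact, so the closed absolutely convex hulls of the $\A$-compact sets form a family of absolutely convex, weakly compact subsets of $E$; moreover this family covers $E$, because every singleton is $\A$-compact (any rank-one map belongs to $\A$). By the Mackey--Arens theorem, $\tau_{\A}$ is then compatible with the pair $\langle E',E\rangle$, so that $(E'_{\A})'=E$. Consequently the continuous finite rank endomorphisms of $E'_{\A}$ are exactly the operators in $E\otimes E'$, matching the finite rank operators that occur in both characterisations.

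The heart of the matter is the comparison of the two topologies, i.e.\ showing that the $\tau_{\A}$-precompact subsets of $E'$ and the norm-bounded sets $\lambda B_{E'}$ generate the same topology of uniform convergence on $\mathcal L(E'_{\A};E'_{\A})$. On one hand, any $\tau_{\A}$-bounded subset of $E'$ is pointwise bounded on $E$ (again because singletons are $\A$-compact), hence norm-bounded by the Banach--Steinhaus theorem; in particular every $\tau_{\A}$-precompact set sits inside some $\lambda B_{E'}$. On the other hand, $B_{E'}$ is itself $\tau_{\A}$-precompact: it is $\sigma(E',E)$-compact by Alaoglu, and on the equicontinuous set $B_{E'}$ the weak-$*$ topology coincides with the topology of uniform convergence on compact subsets of $E$. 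As $\tau_{\A}$ lies between $\sigma(E',E)$ and this compact-open topology $\tau_{\K}$ (the $\A$-compact sets being a subfamily of the compact ones), the trace of $\tau_{\A}$ on $B_{E'}$ equals the weak-$*$ topology, so $B_{E'}$ is $\tau_{\A}$-compact. The two families are therefore mutually cofinal and define the same topology.

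Combining these facts yields the chain: $E$ has the $\A$-approximation property $\iff$ $E\otimes E'$ is dense in $\mathcal L_{\epsilon}(E'_{\A};E'_{\A})$ (preceding Theorem) $\iff$ $E\otimes E'$ is dense in $\mathcal L(E'_{\A};E'_{\A})$ for the topology of uniform convergence on precompact sets (the topologies coincide) $\iff$ $E'_{\A}$ has the approximation property. The step I expect to be the main obstacle is the precise verification that $B_{E'}$ is $\tau_{\A}$-precompact while, dually, $\tau_{\A}$-precompact sets are norm-bounded: the former rests on the coincidence of the weak-$*$ and compact-open topologies on equicontinuous sets together with $\tau_{\A}\subseteq\tau_{\K}$, and the latter on applying Banach--Steinhaus through the $\A$-compactness of singletons.
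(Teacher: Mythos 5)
Your proof is correct, and it hinges on exactly the same key fact as the paper's: the (pre)compact subsets of $E'_{\A}$ are precisely the norm-bounded subsets of $E'$. You establish this by combining Banach--Steinhaus (precompact $\Rightarrow$ $\tau_{\A}$-bounded $\Rightarrow$ pointwise bounded $\Rightarrow$ norm-bounded) with Alaoglu and the sandwich $\sigma(E',E)\subseteq\tau_{\A}\subseteq\tau_{\K}$ on the equicontinuous set $B_{E'}$; the paper obtains the same fact more tersely from the continuity of the identity maps $E'_c\rightarrow E'_{\A}\rightarrow (E',w^*)$, which encodes the same two observations. Where you genuinely diverge is in the packaging. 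The paper simply unwinds the definition of the approximation property of $E'_{\A}$ (approximate the identity uniformly for $x'\in M$, $M$ compact in $E'_{\A}$, in the seminorm given by an $\A$-compact $K\subset E$) and then replaces $M$ by $B_{E'}$, reading off the $\A$-approximation property of $E$ directly, with no reference to the preceding theorem. You instead (1) identify $(E'_{\A})'=E$ via Mackey--Arens, so that the continuous finite-rank endomorphisms of $E'_{\A}$ are exactly $E\otimes E'$, (2) note that your key fact makes the topology of uniform convergence on precompact subsets of $E'_{\A}$ coincide with the $\epsilon$-topology on $\mathcal L(E'_{\A};E'_{\A})$, and (3) invoke the equivalence (i)$\Leftrightarrow$(iii) of the preceding theorem. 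Your route is longer, but it makes explicit a point the paper leaves implicit and which is genuinely needed in both arguments: that the dual of $E'_{\A}$ is $E$, i.e.\ that the finite-rank operators relevant to the approximation property of $E'_{\A}$ are the ones of the form $S'$ with $S\in\mathcal F(E;E)$. The paper's route, on the other hand, is self-contained and avoids Mackey--Arens entirely.
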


\begin{proof}
The locally convex space $E'_\A$ has the approximation property if and only if for any  $\ep>0$, $K
\subset E$ an $\A$-compact set and $M\subset E'_\A$ relatively compact, there exists $S\in
\mathcal F(E;E)$ such that
\begin{equation}\label{E'A and AP}
|(S'-Id)(x')(x)|\leq \ep,\qquad \mbox{for all } x'\in M,\ x\in K.
\end{equation}

The continuity of the identity map $E'_c\rightarrow E'_\A \rightarrow (E',w^*)$ says that
relatively compact sets in $E'_\A$ coincide with $\|.\|$-bounded sets. Then,
$E'_\A$ has the approximation property if and only if in \eqref{E'A and AP}, $M$ is replaced by
$B_{E'}$, which is equivalent to say that $E$ has the $\A$-approximation property.
% have $\| (S-Id_E)(x)\|\le\ep$,  for all $x\in K$, for all $\ep>0$
% and all $\A$-compact set $K$. That is, $E$ has the $\A$-approximation property.
\end{proof}

Last but not least, we give a characterization
of an $\A$-compact operator in terms of the continuity and
compactness of its adjoint. This result is well known for compact
operators and was studied in the polynomial and holomorphic setting
in \cite{AS}. Recall that if $E$ and $F$ are locally convex spaces
and $U\subset F$ is absolutely convex and closed, for a linear
mapping  $T\colon E\rightarrow F$ we have $T(x) \in U$ if and only
if $|y'(Tx)|\leq 1$ for all $y' \in U^\circ$, with $x\in E$.

\begin{proposition}
Let $E$ and $F$ be Banach spaces,  $T\in \mathcal L(E;F)$ and  $\A$ a Banach operator ideal. The
following
statements are equivalent.
\begin{enumerate}[\upshape (i)]
\item $T \in \K_{\A}(E;F)$.
\item $T'\colon F'_{\A}\rightarrow E'$ is continuous.
\item $T'\colon F'_{\A}\rightarrow E'_c$ is compact.
\item $T'\colon F'_{\A}\rightarrow E'_{\mathcal B}$ is compact for any Banach
operator ideal $\mathcal B$.
\item There exists a Banach operator ideal $\mathcal B$ such that $T'\colon
F'_{\A}\rightarrow E'_{\mathcal B}$ is compact.
\item $T'\colon F'_{\A}\rightarrow E'_{w^*}$ is compact.
\end{enumerate}
\end{proposition}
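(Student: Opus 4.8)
The plan is to establish the chain
\[
(i)\Rightarrow(ii)\Rightarrow(iv)\Rightarrow(v)\Rightarrow(vi)\Rightarrow(i),
\]
folding in the remaining item through $(iv)\Rightarrow(iii)\Rightarrow(vi)$. Only two of these steps carry real content: the first, which converts the $\A$-compactness of $T(B_E)$ into continuity of the adjoint, and the last, which recovers $\A$-compactness from a mere weak-$*$ compactness statement. Everything in between is a comparison of the dual topologies $w^*$, $\tau_{\mathcal B}$ and the compact-open topology on norm bounded subsets of $E'$.

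For $(i)\Rightarrow(ii)$ I would only compute, for $y'\in F'$,
\[
\|T'y'\|_{E'}=\sup_{x\in B_E}|y'(Tx)|=\sup_{z\in T(B_E)}|y'(z)|=q_{T(B_E)}(y').
\]
As $T\in\K_\A(E;F)$, the set $T(B_E)$ is relatively $\A$-compact, so $q_{T(B_E)}$ is one of the seminorms generating $\tau_\A$; hence $T'\colon F'_\A\to E'$ is continuous. For $(ii)\Rightarrow(iv)$, continuity provides a $\tau_\A$-neighbourhood $V$ of $0$ with $T'(V)\subset B_{E'}$. Since every finite subset of $E$ is $\mathcal B$-compact and every $\mathcal B$-compact set is relatively compact, the topologies on $E'$ satisfy $w^*\subseteq\tau_{\mathcal B}\subseteq\tau_{\K}$ with $E'_{\K}=E'_c$; and on the equicontinuous set $B_{E'}$ the weak-$*$ and compact-open topologies already agree, so all three coincide there. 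As $B_{E'}$ is $w^*$-compact by Alaoglu, it is $\tau_{\mathcal B}$-compact, whence $T'(V)\subset B_{E'}$ is relatively compact in $E'_{\mathcal B}$ for every $\mathcal B$. Specialising $\mathcal B=\K$ gives $(iii)$, and $(iv)\Rightarrow(v)$ is trivial.

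The implications into $(vi)$ are formal: $w^*$ is coarser than both $\tau_{\mathcal B}$ and the compact-open topology, so the identity maps $E'_{\mathcal B}\to E'_{w^*}$ and $E'_c\to E'_{w^*}$ are continuous, and continuous images of relatively compact sets are relatively compact; thus $(v)\Rightarrow(vi)$ and $(iii)\Rightarrow(vi)$.

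It remains to close the cycle with $(vi)\Rightarrow(i)$, which I expect to be the main obstacle. Assuming $T'\colon F'_\A\to E'_{w^*}$ compact, I would choose a basic $\tau_\A$-neighbourhood $V=\{y'\colon q_K(y')\le\delta\}$, where $K\subset F$ is relatively $\A$-compact (a single seminorm suffices, since finite unions of $\A$-compact sets are $\A$-compact), whose image $T'(V)$ has $w^*$-compact closure. A $w^*$-compact subset of $E'$ is $w^*$-bounded, hence norm bounded by the uniform boundedness principle, so $T'(V)\subset R\,B_{E'}$ for some $R>0$. By homogeneity $\|T'y'\|_{E'}\le\tfrac{R}{\delta}\,q_K(y')$ for all $y'$, that is
\[
\sup_{z\in T(B_E)}|y'(z)|\le\sup_{w\in\frac{R}{\delta}K}|y'(w)|\qquad(y'\in F').
\]
Reading this as the inclusion of polars $\big(\tfrac{R}{\delta}K\big)^\circ\subset\big(T(B_E)\big)^\circ$ and applying the bipolar theorem, I obtain $T(B_E)\subset\tfrac{R}{\delta}\,\overline{\co}\{K\}$. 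Since the closed absolutely convex hull of a relatively $\A$-compact set is again relatively $\A$-compact and scaling preserves $\A$-compactness, $T(B_E)$ is relatively $\A$-compact, i.e.\ $T\in\K_\A(E;F)$, completing the equivalence. The two delicate points are thus the coincidence of the dual topologies on $B_{E'}$ and this final bipolar step; the rest is routine.
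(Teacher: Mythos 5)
Your proof is correct and follows essentially the same route as the paper's: the polar of $T(B_E)$ gives (i)$\Rightarrow$(ii), equicontinuity of the image of a $\tau_\A$-neighbourhood gives the compactness statements (the paper cites Ascoli where you invoke Alaoglu plus the coincidence of $w^*$ and the compact-open topology on $B_{E'}$ --- the same fact), and (vi)$\Rightarrow$(i) is the paper's argument (norm-boundedness of $w^*$-compact sets plus a polar/bipolar step) spelled out in more detail. The only difference is cosmetic: you order the chain as (ii)$\Rightarrow$(iv)$\Rightarrow$(iii) rather than the paper's (ii)$\Rightarrow$(iii)$\Rightarrow$(iv).
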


\begin{proof} Suppose (i) holds, then $\overline{T(B_E)}=K$ is
$\A$-compact and $K^\circ$ is a neighborhood in $F'_{\A}$.
Thus, for $y' \in K^\circ$ we have that $\|T'(y')\|=\sup_{x \in
B_E}|T'(y')(x)|\leq 1$, proving (ii).

Now, suppose (ii) holds. Then, there exists a relative $\A$-compact
set $K\subset F$ such that $T'(K^\circ)$ is equicontinuous in $E'$
which, by the Ascoli theorem, is relatively compact in $E'_c$,
obtaining (iii). Since $Id\colon E'_c\rightarrow E'_{\mathcal B}$ is
continuous for any Banach operator ideal $\mathcal B$, (iii) implies
(iv). That (iv) implies (v) and (v) implies (vi) is clear. It remains to show that (vi) implies (i).
Let $L \subset E'$ be a $w^*$-compact set (hence $\|.\|$-bounded), and $K\subset F$ an absolutely
convex and $\A$-compact set such that $T'(K^\circ)\subset L$. As $T'(K^\circ)$ is
$\|.\|$-bounded, there exists $c >0$ such that $|T'(y')(x)| \leq c$ for every
$y' \in K^\circ$ and $x \in B_E$. Therefore,  $T(B_E)\subset cK$  which ends the
proof.
\end{proof}

\subsection{On the dual $E'$ and approximation properties given by $\A$}

We present a short discussion on approximation
properties for  dual spaces and the dual ideal of $\K_\A$. Recall  that, even though
$E$ has the approximation property, the
space of finite rank operators $\mathcal F(E;F)$ may not be dense in
the space of compact operators $\K(E;F)$. For this to happen it is required $E'$ with the
approximation property. The relations between density of
finite rank operators and approximation properties related operator ideals is not without
precedent. See~\cite[Theorem~2.8]{DOPS} for the $\mathcal N^p$-approximation property
and~\cite[Theorem~2.3]{DPS_dens} for the $s\mathcal N^p$-approximation property. To give our result,
involving a general operator ideal $\A$, we need the following lemma.

\begin{lemma}\label{lemaE'}
Let $E$ and $F$ be Banach spaces and $\A$ a Banach operator ideal.
\begin{enumerate}[\upshape (a)]
\item The set $E\otimes F$ is  $\tau_{s\A}$-dense in $\mathcal F(E';F)$.
\item The set $E\otimes F$ is $\tau_{\A}$-dense in $\mathcal F(E';F)$.
\end{enumerate}
\end{lemma}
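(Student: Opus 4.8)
The plan is to prove the stronger statement~(a) and then read off~(b), since $\tau_{\A}$ is coarser than $\tau_{s\A}$ (Remark~\ref{A and B}~(c)). Fix $T\in\mathcal F(E';F)$ and write $T\colon x'\mapsto\sum_{j=1}^{n}\phi_j(x')y_j$ with $\phi_j\in E''$ and $y_j\in F$, where $E\otimes F$ is identified with the operators of the same form whose functionals lie in $J_E(E)$, $J_E\colon E\to E''$ being the canonical embedding. Fixing an $\A$-compact set $K\subset E'$ and $\ep>0$, the goal is to produce $x_1,\dots,x_n\in E$ such that $S\colon x'\mapsto\sum_{j=1}^{n}x'(x_j)y_j$ satisfies $\m_{\A}((T-S)(K);F)\le\ep$; note $(T-S)(x')=\sum_j(\phi_j-J_E(x_j))(x')\,y_j$.

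First I would establish a purely uniform (sup-norm) estimate. By Theorem~\ref{carlste}~(iii) applied to $E'$ there is an $\A$-null, hence norm-null, sequence $(x'_m)_m\subset E'$ with $K\subset\co\{x'_m\}$; in particular $\sup_m\|x'_m\|<\infty$ and $\|x'_m\|\to0$. Given $\delta>0$ choose $N$ with $\|x'_m\|<\delta$ for $m>N$. For each $j$, Helly's theorem applied to $x'_1,\dots,x'_N$ and the constraints $x'_m(x_j)=\phi_j(x'_m)$ (whose compatibility constant is exactly $\|\phi_j\|$, since $|\sum_m a_m\phi_j(x'_m)|\le\|\phi_j\|\,\|\sum_m a_m x'_m\|$) yields $x_j\in E$ with $\|x_j\|\le\|\phi_j\|+1$ and $(\phi_j-J_E(x_j))(x'_m)=0$ for $m\le N$, while $|(\phi_j-J_E(x_j))(x'_m)|\le(2\|\phi_j\|+1)\delta$ for $m>N$. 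As $\phi_j-J_E(x_j)$ is a bounded linear functional, its supremum over $\co\{x'_m\}$ equals its supremum over the sequence, so $\sup_{x'\in K}\|(T-S)(x')\|\le\delta\,C_0$ with $C_0:=\sum_{j=1}^{n}\|y_j\|(2\|\phi_j\|+1)$. This is already the $\tau_{\A}$-estimate of~(b).

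To upgrade to the $\tau_{s\A}$-estimate of~(a), I would exploit that the range of $T-S$ is trapped in the fixed finite-dimensional subspace $G=\mathrm{span}\{y_1,\dots,y_n\}\subset F$. Since any operator ideal contains the finite rank operators, the inclusion $\iota\colon G\hookrightarrow F$ lies in $\A(G;F)$ with $C:=\|\iota\|_{\A}<\infty$, a constant depending only on $T$ and not on $S$. The set $(T-S)(K)$ is compact ($K$ is compact, $T-S$ continuous) and contained in $rB_G$ with $r:=\sup_{x'\in K}\|(T-S)(x')\|$; assuming $r>0$ (otherwise the claim is trivial) and writing $(T-S)(K)=(r\iota)(M)$ with $M=\tfrac1r(T-S)(K)\subset B_G$ compact, the definition of $\m_\A$ gives $\m_{\A}((T-S)(K);F)\le r\,\|\iota\|_{\A}=rC\le\delta\,C_0C$. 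Choosing $\delta$ with $\delta\,C_0C<\ep$ proves~(a), and~(b) is then immediate (equivalently, it follows directly from the displayed sup-norm bound).

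The main obstacle is exactly the passage from the uniform bound to the $\m_\A$-bound, because $\m_\A$ records $\A$-compact size rather than mere diameter; what dissolves it is that $T-S$ has fixed finite-dimensional range, forcing $\m_\A$ of its image to be comparable to the sup-norm radius through the finite $\A$-norm of the inclusion of $G$. A secondary point demanding care is the simultaneous, norm-controlled interpolation of the $\phi_j$ on the initial segment $x'_1,\dots,x'_N$, which is precisely what Helly's theorem supplies; alternatively, the underlying density $\overline{J_E(E)}=E''$ for the topology of uniform convergence on norm-compact subsets of $E'$ can be seen abstractly, since such subsets are weakly compact, whence that topology is compatible with the dual pair $\langle E'',E'\rangle$ and Goldstine's theorem applies.
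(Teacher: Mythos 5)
Your proof is correct, but it takes a genuinely different route from the paper's at both key steps. The paper normalizes $\sum_{j}\|y_j\|\le 1$, chooses $x_j\in E$ with $\sup_{x'\in K}|x''_j(x')-x'(x_j)|<\ep$ by weak-star density of $J_E(E)$ in $E''$ (essentially the Goldstine-type argument you only sketch as an alternative), and then passes directly to the $\m_{\A}$-estimate by splitting $T-S$ into rank-one pieces: $\m_{\A}((T-S)(K);F)\le\sum_{j}\m_{\A}\bigl(((x''_j-x_j)\underline{\otimes}y_j)(K);F\bigr)=\sum_{j}\sup_{x'\in K}|x''_j(x')-x'(x_j)|\,\|y_j\|$, which implicitly uses that $\m_{\A}$ is subadditive under Minkowski sums and that the $\m_{\A}$-size of the image of a rank-one operator equals its sup-norm size. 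You do both steps differently. For the interpolation you combine Theorem~\ref{carlste}~(iii) with Helly's theorem to match the $\phi_j$ exactly on the first $N$ terms of a norm-null sequence whose $\co$-hull contains $K$, plus a tail estimate; this is more elementary and constructive, and your bound $\|x_j\|\le\|\phi_j\|+1$ is exactly what makes the tail estimate uniform. For the upgrade from the sup-norm bound to the $\m_{\A}$-bound you exploit that $T-S$ has range in the fixed finite-dimensional space $G=\mathrm{span}\{y_1,\dots,y_n\}$, so that $(T-S)(K)\subset (r\iota)(M)$ with $\iota\in\A(G;F)$ and $M\subset B_G$ compact, giving $\m_{\A}((T-S)(K);F)\le r\,\|\iota\|_{\A}$. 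This neatly sidesteps any appeal to subadditivity of $\m_{\A}$; the price is the non-explicit constant $\|\iota\|_{\A}$, which is harmless because it depends only on $T$ (through $G$) and $\delta$ is chosen last --- a quantifier order you correctly respect. Two small points of hygiene: take $M=\overline{\tfrac1r(T-S)(K)}$ so that $M$ is genuinely compact, as the definition of $\m_{\A}$ requires, and note explicitly that $\A$-null sequences are norm-null (immediate from the definition), which is what lets you extract the index $N$.
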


\begin{proof}
Statement (b) follows from (a). To prove the first claim, let $T\in \mathcal F(E';F)$ and suppose
$T=\sum_{j=1}^n x''_j \underline{\otimes} y_j$ with $x_j''\in E''$, $y_j\in F$, $j=1,\ldots, n$ and
$\sum_{j=1}^n\|y_j\|\le 1$. Fix $\ep>0$ and $K\subset E'$ an $\A$-compact
set (and hence compact). By the Alaoglu theorem,  there exist
$x_1,\ldots,x_n \in E$ such that $\sup_{x' \in K} |x''_j(x') - x'(x_j)|<\ep$, for $j=1,\ldots,n$.
Take $S$ the finite rank operator, $S=\sum_{j=1}^n x_j \underline{\otimes} y_j$,
then
$$
\begin{array}{rl}
\m_{\A}((T-S)(K);F) &\le \sum_{j=1}^n \m_{\A}((x''_j-x_j)\underline{\otimes}y_j(K);F)\\
&= \sum_{j=1}^n\sup_{x' \in K} |x''_j(x') - x'(x_j)|\|y_j\|\\
&\le \ep,
\end{array}
$$
and the proof is complete.
\end{proof}

The $\A$ and the $s\A$-approximation properties  are  determined on dual spaces by
denseness of finite rank operators in the dual ideal of $\K_\A$, which is not a surprise at the
light of the examples mentioned above, \cite[Theorem~2.8]{DOPS}
and~\cite[Theorem~2.3]{DPS_dens}. We only give a proof for the result concerning the
$s\A$-approximation property, so we state this result first.

\begin{proposition} Let $E$ be a Banach space  and $\A$ a Banach operator ideal. The following are
equivalent.
\begin{enumerate}[\upshape (i)]
\item $E'$ has the $s\A$-approximation property.
\item For every Banach space $F$, $\mathcal F(E;F)$ is
$\|\cdot\|_{\K_{\A}^d}$- dense in $\K_{\A}^d(E;F)$.
\end{enumerate}
\end{proposition}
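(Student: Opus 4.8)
The plan is to pass to adjoints and reduce everything to the norm $\|\cdot\|_{\K_{\A}}$ and to the characterization of the $s\A$-approximation property given in Proposition~\ref{PAyoper2}. Since $\|T-S\|_{\K_{\A}^d}=\|T'-S'\|_{\K_{\A}}=\m_{\A}\big((T'-S')(B_{F'})\big)$ for $T\in\K_{\A}^d(E;F)$ and $S\in\mathcal F(E;F)$, statement (ii) is nothing but the assertion that the adjoints of finite rank operators from $E$ are $\|\cdot\|_{\K_{\A}}$-dense, among the $w^*$-continuous operators, inside $\K_{\A}(F';E')$. On the other hand, by Proposition~\ref{PAyoper2} applied to the space $E'$, statement (i) means that $\mathcal F(G;E')$ is $\|\cdot\|_{\K_{\A}}$-dense in $\K_{\A}(G;E')$ for every Banach space $G$.

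For (ii)$\Rightarrow$(i) I would argue as follows. Fix $G$, take $V\in\K_{\A}(G;E')$ and set $R=V'J_E\colon E\to G'$. The key preliminary step is to check that $R\in\K_{\A}^d(E;G')$, i.e. that $R'=J_E'V''\in\K_{\A}(G'';E')$. This follows from the compactness of $V$: since $V''$ is $w^*$--$w^*$ continuous and $J_E'$ is $\sigma(E''',E'')$--$\sigma(E',E)$ continuous, one gets $R'(B_{G''})\subset\overline{V(B_G)}^{\,w^*}=\overline{V(B_G)}$, an $\A$-compact set, whence $\|R\|_{\K_{\A}^d}\le\|V\|_{\K_{\A}}$. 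Now I would apply the hypothesis (ii) with $F=G'$ to produce $S\in\mathcal F(E;G')$ with $\|R-S\|_{\K_{\A}^d}<\ep$, and put $\widetilde S=S'J_G\in\mathcal F(G;E')$. Since $R'J_G=V$, we have $V-\widetilde S=(R'-S')J_G$ and therefore $\m_{\A}\big((V-\widetilde S)(B_G)\big)\le\m_{\A}\big((R'-S')(B_{G''})\big)=\|R-S\|_{\K_{\A}^d}<\ep$, which gives the required density and hence (i).

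For (i)$\Rightarrow$(ii) I would start from $T\in\K_{\A}^d(E;F)$ and let $K$ be the closed absolutely convex hull of $T'(B_{F'})$, an $\A$-compact subset of $E'$. By Proposition~\ref{PAidentity} the $s\A$-approximation property of $E'$ provides $P\in\mathcal F(E';E')$ with $\m_{\A}\big((\mathrm{Id}-P)(K)\big)<\ep/2$. The difficulty is that $P$ need not be an adjoint, so $P\circ T'$ need not be the adjoint of an operator from $E$; this is exactly where Lemma~\ref{lemaE'} enters. Applying Lemma~\ref{lemaE'}(a) with the spaces chosen so that $E\otimes E'$ is $\tau_{s\A}$-dense in $\mathcal F(E';E')$, I would replace $P$ by $\widetilde P\in E\otimes E'$ with $\m_{\A}\big((P-\widetilde P)(K)\big)<\ep/2$. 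The point is that $\widetilde P$ is $w^*$-continuous, so $\widetilde P=Q'$ for some $Q\in\mathcal F(E;E)$, and then $S:=TQ\in\mathcal F(E;F)$ satisfies $S'=Q'T'=\widetilde P\,T'$. Consequently $\|T-S\|_{\K_{\A}^d}=\m_{\A}\big((\mathrm{Id}-\widetilde P)(T'(B_{F'}))\big)\le\m_{\A}\big((\mathrm{Id}-\widetilde P)(K)\big)<\ep$, proving (ii).

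The main obstacle in the whole argument is the mismatch between finite rank operators on $E'$ and adjoints of finite rank operators from $E$: a finite rank map on $E'$ has its first factors in $E''$, and these cannot be moved into $J_E(E)$ in the norm of $E''$. The resolution in both implications is to require approximation only on the relevant $\A$-compact set (the image $T'(B_{F'})$, respectively the image $V(B_G)$), where Goldstine's theorem together with norm-compactness --- packaged in Lemma~\ref{lemaE'} --- does allow the passage to $w^*$-continuous, hence adjoint, operators; the secondary technical point is the verification that $R'=J_E'V''$ remains $\A$-compact, which rests on $V$ being compact.
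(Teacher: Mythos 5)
Your proposal is correct and takes essentially the same approach as the paper: for (i)$\Rightarrow$(ii) it combines the $s\A$-approximation property of $E'$ with Lemma~\ref{lemaE'} to replace the finite-rank approximant of the identity by a $w^*$-continuous one $\widetilde P=Q'$ and sets $S=TQ$, and for (ii)$\Rightarrow$(i) it passes from $V\in\K_\A(G;E')$ to $V'J_E\in\K_\A^d(E;G')$ and composes back with $J_G$, exactly as the paper does. The only difference is bookkeeping: where you verify $R=V'J_E\in\K_\A^d(E;G')$ by hand (Goldstine plus norm-compactness of $V(B_G)$), the paper simply invokes its Proposition~\ref{K_A dd}, whose proof is that very argument.
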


\begin{proof}
If (i) holds, fix $\ep>0$ and take $T \in \K_{\A}^d(E;F)$. Since
$T' \in \K_{\A}(F';E')$ and $E'$ has the $s\A$-approximation
property, by Lemma~\ref{lemaE'}, there exists $S \in \mathcal
F(E;F)$ such that $\|T-S\|_{\K_{ A}^d}= \|S'-T'\|_{\K_{\A}}\leq \ep$
which gives (ii).

Now suppose (ii) holds and take $T \in \K_{\A}(F;E')$. By
Proposition~\ref{K_A dd}, $T' \in \K_{\A}^d(E'';F')$ and  $T'\circ
J_E \in \K_{\A}^d(E;F')$. Fix $\ep>0$, by hypothesis, there
exists $S \in \mathcal F(E;F')$ such that $\|S-T'\circ J_E\|_{\K_{\A}^d}\leq \ep$. Since
$T= (J_E)'\circ T''\circ J_F$ and $S' \circ J_F \in \mathcal
F(F;E')$, we see that
$$
\|S' \circ J_F - T\|_{\K_{\A}} \leq \|S'-(T'\circ J_E)'\|_{\K_{\A}} =\|S-T'\circ
J_E\|_{\K_{\A}^d}\leq \ep.
$$
An application of Proposition~\ref{PAyoper2} gives the result.
\end{proof}

Finally, we have the analogous result for $E'$ and the $\A$-approximation property.

\begin{proposition} Let $E$ be a Banach space  and $\A$ a Banach operator ideal. The following are
equivalent.
\begin{enumerate}[\upshape (i)]
\item $E'$ has the $\A$-approximation property.
\item For every Banach space $F$, $\mathcal F(E;F)$ is $\|\cdot\|$- dense
in $\K_{\A}^d(E;F)$.
\end{enumerate}
\end{proposition}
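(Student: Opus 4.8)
The plan is to mirror the duality scheme used for the $s\A$-version, but to carry the supremum norm in place of $\|\cdot\|_{\K_\A^d}$ and the $\A$-approximation property in place of the $s\A$-one. The first remark I would make is that $\K_\A^d(E;F)$ is contained in $\K(E;F)$: since $\A$-compact sets are compact we have $\K_\A\subseteq\K$, and compactness passes to adjoints (Schauder), so $\K^d=\K$; hence every $T\in\K_\A^d(E;F)$ is a compact operator whose adjoint $T'$ lies in $\K_\A(F';E')$. I would therefore read both implications on the adjoint side, where the $\A$-approximation property of $E'$ governs operator-norm approximation of operators ranging in $E'$.

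For (i)$\Rightarrow$(ii) I would fix $T\in\K_\A^d(E;F)$ and $\ep>0$, and set $K=\overline{\co}\{T'(B_{F'})\}\subseteq E'$, which is again $\A$-compact (the class of $\A$-compact sets being stable under closed absolutely convex hulls, cf.\ Theorem~\ref{carlste}). Since $E'$ has the $\A$-approximation property, Proposition~\ref{PAidentity} furnishes $W\in\mathcal F(E';E')$ with $\sup_{x'\in K}\|Wx'-x'\|\le\ep$; as $T'(B_{F'})\subseteq K$ this gives $\|WT'-T'\|\le\ep$. The remaining point is to recognize $WT'$ as an adjoint: writing $W=\sum_j\psi_j\underline{\otimes}x'_j$ with $\psi_j\in E''$, one computes $WT'(y')=\sum_j(T''\psi_j)(y')x'_j$, and because $T$ is compact $T''$ carries $E''$ into $J_F(F)$, so $T''\psi_j=J_F(y_j)$ for suitable $y_j\in F$. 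Then $S=\sum_j x'_j\underline{\otimes}y_j\in\mathcal F(E;F)$ satisfies $S'=WT'$ and $\|T-S\|=\|T'-S'\|\le\ep$. This is exactly the place where Lemma~\ref{lemaE'} enters in the abstract formulation, trading $E''$-coefficients for $E$-coefficients.

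For (ii)$\Rightarrow$(i) I would take $T\in\K_\A(F;E')$ and use Proposition~\ref{K_A dd}: from $\K_\A\subseteq\K_\A^{dd}$ one gets $T'\in\K_\A^d(E'';F')$, hence $T'\circ J_E\in\K_\A^d(E;F')$. Hypothesis (ii), applied with the Banach space $F'$, yields $S\in\mathcal F(E;F')$ with $\|S-T'\circ J_E\|\le\ep$. The factorization $T=(J_E)'\circ T''\circ J_F$ together with $\|J_F\|=1$ then gives $S'\circ J_F\in\mathcal F(F;E')$ with
$$
\|S'\circ J_F-T\|\le\|S'-(T'\circ J_E)'\|=\|S-T'\circ J_E\|\le\ep .
$$
Thus $\mathcal F(F;E')$ is norm-dense in $\K_\A(F;E')$ for every Banach space $F$, and Proposition~\ref{PAyoper1} gives the $\A$-approximation property of $E'$.

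The hard part will be the conversion step in (i)$\Rightarrow$(ii): the $\A$-approximation property of $E'$ only supplies a finite rank operator $W$ on $E'$, and one must turn $W\circ T'$ into a genuine adjoint $S'$ of a finite rank operator $E\to F$ while keeping the operator-norm estimate. This conversion rests on the compactness of $T$ (equivalently on Lemma~\ref{lemaE'}); once it is secured, the duality and bidual manipulations in both directions are routine.
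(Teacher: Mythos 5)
Your proof is correct. The paper does not actually write out a proof of this proposition---it states it as the sup-norm analogue of the $s\A$-version proved just before---and your implication (ii)$\Rightarrow$(i) coincides exactly with that template: $\K_\A\subset\K_\A^{dd}$ (Proposition~\ref{K_A dd}), the factorization $T=(J_E)'\circ T''\circ J_F$, the estimate $\|S'\circ J_F-T\|\le\|S-T'\circ J_E\|$, and Proposition~\ref{PAyoper1}. Where you genuinely depart from the paper is the conversion step in (i)$\Rightarrow$(ii). The paper's mechanism is Lemma~\ref{lemaE'}: combining the $\A$-approximation property of $E'$ with the $\tau_{\A}$-density of $E\otimes E'$ in $\mathcal F(E';E')$, one approximates $Id_{E'}$ on the $\A$-compact set $T'(B_{F'})$ by an operator of the form $V_0'$ with $V_0\in\mathcal F(E;E)$, and then $S=T\circ V_0\in\mathcal F(E;F)$ does the job, since $S'=V_0'T'$; this is a Goldstine-type density argument (it trades $E''$-coefficients for $E$-coefficients uniformly on compact subsets of $E'$) and never uses compactness of $T$ itself. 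You instead take an arbitrary $W\in\mathcal F(E';E')$ approximating $Id_{E'}$ on $T'(B_{F'})$ and realize $WT'$ \emph{exactly} as an adjoint, invoking Schauder ($\K_\A^d\subset\K^d=\K$) and Gantmacher (compact $\Rightarrow$ weakly compact $\Rightarrow T''(E'')\subset J_F(F)$), so that $WT'=S'$ with no further approximation. Both conversions are sound: yours buys the exact identity $S'=WT'$ and dispenses with Lemma~\ref{lemaE'} altogether, at the price of two classical theorems and of the (true) fact that members of $\K_\A^d$ are compact; the paper's route is the one that also runs verbatim for the $s\A$-version, where only approximation on $\A$-compact sets in the $\m_{\A}$-measure is available, which is why it is the argument written there. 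Two cosmetic remarks: taking $K=\overline{\co}\{T'(B_{F'})\}$ is superfluous, since $T'(B_{F'})$ is already relatively $\A$-compact and the seminorms defining $\tau_{\A}$ are indexed by such sets; and weak compactness of $T$ would suffice for your Gantmacher step.
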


\subsection*{Acknowledgements} The authors would like to thank Bernd Carl
for providing them with the reference \cite{CaSt} and to Ver\'onica
Dimant for her helpful comments.

\end{document}